\newtheorem{thm}{Theorem}[section]
\newtheorem{lem}[thm]{Lemma}
\newtheorem{defn}[thm]{Definition}
\newtheorem{Remark}{Remark}
\numberwithin{equation}{section}
\numberwithin{Remark}{section}
\begin{document}

\title{On Uniqueness And Existence of Conformally Compact Einstein Metrics with Homogeneous Conformal Infinity}

\author{Gang Li$^\dag$}

\begin{abstract} In this paper we show that for a generalized Berger metric $\hat{g}$ on $S^3$ close to the round metric, the conformally compact Einstein (CCE) manifold $(M, g)$ with $(S^3, [\hat{g}])$ as its conformal infinity is unique up to isometries. For the high-dimensional case, we show that if $\hat{g}$ is an $\text{SU}(k+1)$-invariant metric on $S^{2k+1}$ for $k\geq1$, the non-positively curved CCE metric on the $(2k+1)$-ball $B_1(0)$ with $(S^{2k+1}, [\hat{g}])$ as its conformal infinity is unique up to isometries. In particular, since in \cite{LiQingShi}, we proved that if the Yamabe constant of the conformal infinity $Y(S^{2k+1}, [\hat{g}])$ is close to that of the round sphere then any CCE manifold filled in must be negatively curved and simply connected, therefore if $\hat{g}$ is an $\text{SU}(k+1)$-invariant metric on $S^{2k+1}$ which is close to the round metric, the CCE metric filled in is unique up to isometries. Using the continuity method, we prove an existence result of the non-positively curved CCE metric with prescribed conformal infinity $(S^{2k+1}, [\hat{g}])$ when the metric $\hat{g}$ is $\text{SU}(k+1)$-invariant.
\end{abstract}

\renewcommand{\subjclassname}{\textup{2000} Mathematics Subject Classification}
 \subjclass[2010]{Primary 53C25; Secondary 58J05, 53C30, 34B15}


\thanks{$^\dag$ Research partially supported by the National Natural Science Foundation of China No. 11701326 and the Fundamental Research Funds of Shandong University 2016HW008.}

\address{Gang Li, Department of Mathematics, Shandong University, Jinan, Shandong Province, China}
\email{runxing3@gmail.com}

\maketitle


\section{Introduction}

This is a continuation of our previous work (\cite{Li}) on uniqueness of conformally compact Einstein (CCE) metrics (see Definition \ref{defn_conformallycompactEinsteinmetric}) with prescribed homogeneous conformal infinity. Let $B_1$ be the unit ball in the Euclidean space $\mathbb{R}^{n+1}$ of dimension $(n+1)$, with its boundary $\mathbb{S}^n$ the unit sphere. For a given homogeneous metric $\hat{g}$ on $\mathbb{S}^n$, in this paper we mainly focus on the uniqueness and existence of non-positively curved CCE metrics $g$ on $B_1$ with $(\mathbb{S}^n, [\hat{g}])$ as its conformal infinity.

In \cite{GL}, for a Riemannian metric $\hat{g}$ on the $n$-sphere $\mathbb{S}^n$ which is $C^{2,\alpha}$ close to the round metric, Graham and Lee proved the seminal existence result that there exists a CCE metric on the $(n+1)$-ball $B_1(0)$ with $(\mathbb{S}^n, [\hat{g}])$ as its conformal infinity, and the solution is unique in a small neighborhood of the asymptotic solution they constructed in a weighted space by the implicit function theorem. Later Lee generalized this perturbation result to more general CCE manifolds which include the case when they are non-positively curved. It is interesting to understand whether the solution is globally unique with the prescribed conformal infinity. On the other hand, in light of LeBrun's local construction in \cite{LeBrun1}, when the conformal infinity is a Berger metric on $\mathbb{S}^3$ or a generalized Berger metric which is left invariant under the $\text{SU}(2)$ action, Pedersen \cite{Pedersen} and Hitchin \cite{Hitchin1} could fill in a global CCE metric on the $4$-ball, which has self-dual Weyl curvature, and the metric is unique under the self-duality assumption. If both the conformal infinity $(\mathbb{S}^n, [\hat{g}])$ and the non-local term in the expansion (see Theorem \ref{thm_expansion1}) of the Einstein metric at infinity are given, Anderson \cite{Anderson} and Biquard \cite{Biquard2} proved that the CCE metric is unique up to isometry. When $\hat{g}$ is the round metric, it is proved that the CCE metric filled in must be the hyperbolic space, see \cite{Andersson-Dahl}\cite{Q}\cite{DJ}\cite{LiQingShi}, see also \cite{CLW}. In general, given the conformal infinity, the CCE metrics filled in is not necessarily unique, see \cite{HP} and \cite{Anderson}. For instance, there are CCE metrics on $\mathbb{S}^2\times \mathbb{R}^2$ which are not isometric, with the same conformal infinity on $\mathbb{S}^2\times\mathbb{S}^1$.

Based on \cite{ST}, \cite{WY} and \cite{DJ}, in \cite{LiQingShi} we proved that given a conformal infinity $(\mathbb{S}^n, [\hat{g}])$ with its Yamabe constant close to that of the round sphere metric, the CCE manifold filled in must be a Hadamard
 manifold (a simply connected non-positively curved complete non-compact Riemannian manifold), with its sectional curvature close to $-1$ uniformly. So it is natural to consider the uniqueness problem under the assumption that the CCE metric is non-positively curved. Let $(M^{n+1}, g)$ be a non-positively curved CCE with $(\partial M, [\hat{g}])$ as its conformal infinity. In \cite{Li}, inspired by X. Wang's work in \cite{Wang} (see also \cite{Andersson-Dahl} and \cite{Anderson1}) we were able to show that any smooth conformal Killing vector field $Y$ on $(\partial M, [\hat{g}])$ can extend continuously to a Killing vector field $X$ on $(M, g)$. As an application, if we also assume that $M$ is simply connected and the conformal infinity is $(\mathbb{S}^n, [\hat{g}])$ with $\hat{g}$ some homogeneous metric on $\mathbb{S}^n$, we proved that $g$ is homogeneous when restricted to any geodesic spheres centered at the unique fixed point $p_0\in M$ (called {\it the center of gravity}) of the group action, which is generated by the Killing vector fields extended from the conformal infinity, and we have a natural geodesic defining function
 \begin{align*}
 x=e^{-r}
 \end{align*}
 about $C\hat{g}$, with $r$ the distance function to $p_0$ on $(M, g)$ and some constant $C>0$; moreover, when the conformal infinity is $(\mathbb{S}^3, [\hat{g}])$ with $\hat{g}$ the Berger metric or the generalized Berger metric, along a geodesic line $\gamma$ connecting $p_0$ and a given point $q\in \partial M$,  we deformed the problem to a boundary value problem of a system of  second order elliptic type ODEs (of  three functions $(y_1(x), \,y_2(x),\,y_3(x))$) on $x\in[0,1]$,  which are degenerate at the two boundary points, see $(\ref{equn_GBergerEinstein01})-(\ref{equn_GBergerBV01})$. This approach gives a natural way to fix the gauge of the Einstein equations for this special problem. For the Berger metric case, either $y_2$ or $y_3$ vanishes identically on $x\in[0,1]$ and we were able to use the monotonicity of the solutions $y_i(x)$ on $x\in[0,1]$ and an integral version comparison theorem to show the uniqueness of the solution to be boundary value problem i.e., when $\hat{g}$ is a Berger metric on $\mathbb{S}^3$, up to isometry there exists at most one CCE metric on $B_1$ which is non-positively curved with $(\mathbb{S}^3, [\hat{g}])$ its conformal infinity, and hence it is the metric constructed by Pedersen in \cite{Pedersen}. Moreover, by \cite{LiQingShi}, for the Berger metric $\hat{g}$ in $(\mathbb{S}^3, [\hat{g}])$ close enough to the round sphere metric, the CCE manifold filled in is automatically simply connected and non-positively curved, and therefore it is unique up to isometry, which is Pedersen's metric in \cite{Pedersen} and also Graham-Lee's metric in \cite{GL}.

For the generalized Berger metric $\hat{g}$ on $\mathbb{S}^3$, by the interaction of 
$y_1(x),\,y_2(x)$ and $\,y_3(x)$ in the system $(\ref{equn_GBergerEinstein01})-(\ref{equn_GBergerEinstein04})$, the 
integral comparison argument fails to give uniqueness of the solution to this boundary value problem. In this paper, we consider the uniqueness of the solution in this case by a contradiction argument. Based on a monotonicity argument and an a priori estimate of the solution, we consider the total variation of the difference of two solutions and show that
\begin{thm}\label{thm_Einsteineqnsbvpuniqueness1}
 Let $(\mathbb{S}^3, \hat{g})$ be a generalized Berger sphere with the standard form
 \begin{align}\label{equn_Bergermetric1}
\hat{g}=\lambda_1 \sigma_1^2\,+\,\lambda_2d\sigma_2^2\,+\,\lambda_3 d\sigma_3^2,
\end{align}
with $\sigma_1, \,\sigma_2$ and $\sigma_3$ three $\text{SU}(2)$-invariant $1$-forms, such that $\lambda_1, \lambda_2$ and $ \lambda_3$ differ from one another and without loss of generality we assume $\lambda_1>\max\{\lambda_2, \,\lambda_3\}$. Assume also that $\phi_1(0)=\frac{\lambda_2}{\lambda_1}$ and $\phi_2(0)=\frac{\lambda_3}{\lambda_2}$ satisfy the inequality $1<\phi_1(0)+\phi_1(0)\phi_2(0)$. Then there exists some constant 
 $0< \eta_0 <1-3\times10^{-8}$, such that if $\eta_0<\phi_1(0)<1$ and $\eta_0 < \phi_2(0)$, then the Hadamard manifold $(M^4, g)$ which is conformally compact Einstein, 
 with $(\mathbb{S}^3, [\hat{g}])$ as its conformal infinity, must be unique up to isometry. And hence, it is Hitchin's metric in \cite{Hitchin1}, and for $\eta_0$ close enough to $1$ it is also Graham-Lee's metric in \cite{GL}.
\end{thm}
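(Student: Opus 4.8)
The plan is to reduce the uniqueness statement to a uniqueness theorem for the boundary value problem of the ODE system $(\ref{equn_GBergerEinstein01})$--$(\ref{equn_GBergerBV01})$, and then to establish that uniqueness by a contradiction argument built on a total-variation estimate for the difference of two solutions. First I would invoke the structural results already summarized from \cite{Li}: given a Hadamard CCE manifold $(M^4,g)$ with generalized Berger conformal infinity $(\mathbb{S}^3,[\hat{g}])$, the Killing fields extended from conformal Killing fields of $(\partial M,[\hat g])$ generate an $\text{SU}(2)$-action with a unique center of gravity $p_0$, the geodesic defining function is $x=e^{-r}$, and along the geodesic line $\gamma$ from $p_0$ to $q\in\partial M$ the metric is completely encoded by the triple $(y_1(x),y_2(x),y_3(x))$ solving the degenerate elliptic system. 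Thus two non-isometric CCE fillings would produce two distinct solution triples to the same boundary value problem, and conversely uniqueness of the ODE solution yields uniqueness of $g$ up to isometry, with the identification of the solution as Hitchin's metric \cite{Hitchin1}, and as Graham--Lee's \cite{GL} when $\eta_0$ is near $1$.

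Next I would set up the comparison machinery. Writing $\phi_1=y_2/y_1$, $\phi_2=y_3/y_2$ (consistent with the stated initial conditions $\phi_1(0)=\lambda_2/\lambda_1$, $\phi_2(0)=\lambda_3/\lambda_2$), the monotonicity argument would first pin down the qualitative behavior of any solution: I expect to show that $\phi_1(x)$ and $\phi_2(x)$ remain ordered and bounded, with $\phi_1<1$ and $\phi_2$ bounded below by the threshold $\eta_0$, so that all solutions live in a controlled region of phase space. The hypothesis $1<\phi_1(0)+\phi_1(0)\phi_2(0)$ together with $\lambda_1>\max\{\lambda_2,\lambda_3\}$ should guarantee the solution stays non-degenerate (the $y_i$ stay positive and distinct) along $(0,1)$, which is what allows the system to be treated as a genuinely three-function problem rather than collapsing to the Berger case where $y_2$ or $y_3$ vanishes. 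Crucially, I would derive uniform a priori estimates on the $y_i$ and their derivatives, valid for every admissible solution, since the contradiction argument needs both hypothetical solutions to obey the same bounds.

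The core of the proof is the contradiction step. Supposing two distinct solutions $(y_1,y_2,y_3)$ and $(\tilde y_1,\tilde y_2,\tilde y_3)$ with the same boundary data, I would consider the differences $w_i=y_i-\tilde y_i$ and study the total variation $\int_0^1\sum_i|w_i'|\,dx$, or an equivalent energy/monotonicity functional for the $w_i$. Using the ODE system one obtains a second-order (elliptic-type) system for the $w_i$ with coefficients controlled by the a priori estimates; the goal is to show that the total variation of the difference must both be positive (since the solutions differ somewhere in the interior) and yet forced to vanish, by exploiting the sign structure of the linearized system in the region where $\eta_0<\phi_1<1$ and $\eta_0<\phi_2$. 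The explicit numerical threshold $\eta_0<1-3\times10^{-8}$ strongly suggests that the decisive inequalities close only on a quantitatively restricted region, so the argument will combine the monotonicity of the $y_i$ with a careful sign analysis of the coefficient matrix, integrating by parts and using the degeneracy at $x=0$ and $x=1$ to kill boundary terms.

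The main obstacle, as the paper itself flags, is precisely the coupling among $y_1,y_2,y_3$ that defeats the naive integral comparison theorem used successfully in the Berger case. The difficulty is that the interaction terms can have indefinite sign, so controlling $\int_0^1\sum_i|w_i'|\,dx$ requires showing that the ``good'' monotone contributions dominate the ``bad'' cross terms uniformly; this is where the quantitative smallness encoded in $\eta_0$ enters, and where the a priori estimates must be sharp enough to make the total-variation functional strictly monotone (or strictly signed) along the interval. I expect the bulk of the technical work, and the reason for the explicit constant, to lie in verifying that these competing terms balance in favor of uniqueness throughout the admissible phase region, with the boundary degeneracy handled as a separate but more routine matter.
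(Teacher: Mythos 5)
Your overall strategy coincides with the paper's: reduce to uniqueness for the boundary value problem $(\ref{equn_GBergerEinstein01})$--$(\ref{equn_GBergerBV01})$ via the symmetry extension from \cite{Li}, establish monotonicity and a priori bounds for any solution, and then run a contradiction argument on the total variation of the difference of two solutions. However, two ingredients that actually make the argument close are missing or mischaracterized, and without them the plan does not go through.

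First, you describe the degeneracy at the endpoints as ``a separate but more routine matter.'' Near $x=0$ it is indeed handled by integrating factors (Lemma \ref{lem_uniformests301}), but those estimates degenerate as $x\to 1$ (the constants blow up like $(1-x^2)^{-1}(1-\log(1-x^2))$), and the decisive input near $x=1$ is \emph{not} an ODE argument at all: it is the pointwise bound $|W|_g\le 2\sqrt{6}$ coming from non-positive sectional curvature plus the Einstein equation, converted via the mixed components $W_{piq0}$ into the decay $|y_i'|\le C\varepsilon(1-x^2)$ and $|y_1'|\le C\varepsilon^2(1-x^2)^3$ (Lemma \ref{lem_yiboundrightGB}). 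This is precisely why the admissible threshold $\eta_0$ is forced so close to $1$: the cutoff $\varepsilon_0$ splitting $[0,1]$ into a ``left'' region controlled by the initial data and a ``right'' region controlled by the Weyl bound must absorb the universal constant $T=2\sqrt{6}$, not a small quantity. Your plan never identifies where a bound on $[\,\tfrac34,1]$ would come from. Second, your description of the total-variation step omits the mechanism that makes it work: one integrates the difference equations, multiplied by $(x-x^3)$, only over the \emph{maximal intervals of monotonicity} of each $z_i$ inside $\{z_i\ge 0\}$ and $\{z_i\le 0\}$; on such an interval the dangerous zeroth-order term (proportional to $\phi_{21}-\phi_{11}$, resp.\ $\phi_{22}-\phi_{12}$) has a sign \emph{opposite} to the left-hand side and can simply be discarded, after which one obtains the closed linear system $V(z_2)\le \tfrac12 V(z_1)+\tfrac14 V(z_3)$, $V(z_3)\le \tfrac12 V(z_1)+\tfrac14 V(z_2)$, $V(z_1)\le \tfrac13(V(z_2)+V(z_3))$, forcing all $V(z_i)=0$. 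A generic ``sign analysis of the coefficient matrix'' will not find this, because the coupling terms are not small in any norm; they are only disposable on the correctly chosen intervals. Two smaller points: your identification $\phi_1=y_2/y_1$ is wrong (the $y_i$ are logarithms, $y_2=\log\phi_1$ with $\phi_1=I_2/I_1$), and the case in which the two solutions agree to high order at $x=0$ (same non-local term $g^{(3)}$) must be excluded by the unique continuation result of \cite{Biquard2} before one can assume the $z_i$ have isolated zeroes and a well-defined first interval of monotonicity.
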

The estimates of the solution are based on the monotonicity of the solutions and the choice of different types of integrating factors for different equations, on different parts of the interval $x\in[0,1]$, due to the degeneracy of the elliptic equations. We should remark that the upper bound of the norm of Weyl tensor $|W|_g$ is used in the estimate of the solutions near $x=1$. It is interesting that for the difference of any given two solutions $(z_1,z_2,z_3)=(y_{11}, y_{12}, y_{13})-(y_{21}, y_{22}, y_{31})$, the total variation of $z_2$ and $z_3$ are well controlled by a certain type of integration of the equation only on some special intervals of monotonicity on $x\in[0,1]$, which is enough for our argument. If we could have an estimate that $\sup_M|W|_g$ is small for the given conformal infinity, then the lower bound $\eta_0>0$ in Theorem \ref{thm_Einsteineqnsbvpuniqueness1} could be much smaller, by the estimates in the proof. On the other hand, for $ 0 < \eta_0 < 1$ close enough to $1$, $g$ must be non-positively curved and the quantity $\sup_M|W|_g$ is small enough by Theorem \ref{EHBoundary} (see also \cite{LiQingShi}), and hence as a corollary of Theorem \ref{thm_Einsteineqnsbvpuniqueness1}, we have
 \begin{thm}\label{thm_Einsteineqnsbvpuniqueness2}
 Let $(\mathbb{S}^3, \hat{g})$ be a generalized Berger sphere with the expression $(\ref{equn_Bergermetric1})$ such that $\lambda_1, \lambda_2$ and $ \lambda_3$ differ from one another and $\lambda_1>\max\{\lambda_2, \,\lambda_3\}$. Assume that $\phi_1(0)=\frac{\lambda_2}{\lambda_1}$ and $\phi_2(0)=\frac{\lambda_3}{\lambda_2}$. Then there exists some constant $0< \eta_0 <1$ close enough to $1$, such that if $\eta_0<\phi_1(0)<1$  and $\eta_0 < \phi_2(0)$, then the conformally compact Einstein manifold $(M^4, g)$ with $(\mathbb{S}^3, [\hat{g}])$ as its conformal infinity is unique up to isometry, and hence it must be the (anti-)self-dual metric constructed in \cite{Hitchin1} and also the perturbation metric in \cite{GL}.
\end{thm}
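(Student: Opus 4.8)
The plan is to deduce Theorem \ref{thm_Einsteineqnsbvpuniqueness2} as a corollary of the more refined Theorem \ref{thm_Einsteineqnsbvpuniqueness1}, so the work here is not to re-prove the ODE uniqueness but to verify that the geometric hypotheses of the refined theorem are automatically met once $\eta_0$ is taken close enough to $1$. The two statements share the same algebraic setup (the generalized Berger metric \eqref{equn_Bergermetric1}, the ordering $\lambda_1>\max\{\lambda_2,\lambda_3\}$, and the ratios $\phi_1(0)=\lambda_2/\lambda_1$, $\phi_2(0)=\lambda_3/\lambda_2$), and Theorem \ref{thm_Einsteineqnsbvpuniqueness1} already delivers uniqueness provided two extra conditions hold: the curvature condition that $(M^4,g)$ be a Hadamard manifold (non-positively curved and simply connected), and the algebraic condition $1<\phi_1(0)+\phi_1(0)\phi_2(0)$. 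My task is therefore to show that both of these are implied, for $\eta_0$ sufficiently near $1$, by the hypotheses $\eta_0<\phi_1(0)<1$ and $\eta_0<\phi_2(0)$.

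First I would dispose of the algebraic condition. As $\eta_0\to 1$, the hypotheses force $\phi_1(0),\phi_2(0)\to 1$, so $\phi_1(0)+\phi_1(0)\phi_2(0)\to 2>1$; hence there is a threshold $\eta_1<1$ such that $\eta_0>\eta_1$ guarantees $1<\phi_1(0)+\phi_1(0)\phi_2(0)$. Concretely, if $\phi_1(0)>\tfrac12$ and $\phi_2(0)>0$ one already has room, so this condition is the mild one and costs nothing for $\eta_0$ near $1$. Second, and this is the geometric heart of the reduction, I would invoke Theorem \ref{EHBoundary} (equivalently the results imported from \cite{LiQingShi}): when the ratios $\phi_1(0),\phi_2(0)$ are close enough to $1$ the conformal infinity $(\mathbb{S}^3,[\hat g])$ is $C^{2,\alpha}$-close to the round metric, its Yamabe constant $Y(\mathbb{S}^3,[\hat g])$ is close to that of the round sphere, and consequently \emph{any} CCE manifold filled in is forced to be a Hadamard manifold with sectional curvature close to $-1$ and, in particular, with $\sup_M|W|_g$ small. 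This is exactly the curvature hypothesis required to run Theorem \ref{thm_Einsteineqnsbvpuniqueness1}, and it also supplies the smallness of $\sup_M|W|_g$ that the proof of that theorem uses near $x=1$.

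Combining the two steps, I would choose $\eta_0$ to be the maximum of $\eta_1$ and whatever threshold the closeness-to-round input from Theorem \ref{EHBoundary} demands (and at least as large as the $\eta_0$ appearing in Theorem \ref{thm_Einsteineqnsbvpuniqueness1}). For any $\eta_0$ above this value, the hypotheses $\eta_0<\phi_1(0)<1$ and $\eta_0<\phi_2(0)$ guarantee simultaneously that $(M^4,g)$ is a Hadamard manifold and that $1<\phi_1(0)+\phi_1(0)\phi_2(0)$, so Theorem \ref{thm_Einsteineqnsbvpuniqueness1} applies verbatim and yields uniqueness up to isometry. The identification of the unique metric with Hitchin's (anti-)self-dual construction in \cite{Hitchin1} and with the Graham--Lee perturbation metric in \cite{GL} is then inherited directly from the conclusion of Theorem \ref{thm_Einsteineqnsbvpuniqueness1}, since for $\eta_0$ near $1$ that theorem already asserts both identifications.

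I expect the only genuine subtlety to lie in the quantitative bookkeeping of the closeness thresholds: one must make sure that the single constant $\eta_0$ can be chosen so that \emph{all three} requirements — Hadamard/non-positive curvature, smallness of $\sup_M|W|_g$, and the algebraic inequality — hold on the \emph{same} neighborhood of $1$, rather than on three possibly incompatible ranges. Since each requirement is an open condition satisfied in the limit $\phi_1(0),\phi_2(0)\to 1$, taking $\eta_0$ to be the maximum of the finitely many thresholds resolves this, and no new analytic input beyond Theorems \ref{thm_Einsteineqnsbvpuniqueness1} and \ref{EHBoundary} is needed.
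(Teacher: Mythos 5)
Your proposal is correct and is essentially the paper's own argument: the paper derives Theorem \ref{thm_Einsteineqnsbvpuniqueness2} as an immediate corollary of Theorem \ref{thm_Einsteineqnsbvpuniqueness1} by noting that for $\eta_0$ close enough to $1$ the Yamabe constant of $(\mathbb{S}^3,[\hat g])$ is close to that of the round sphere, so Theorem \ref{EHBoundary} forces the filled-in CCE manifold to be Hadamard with $\sup_M|W|_g$ small, while the algebraic condition $1<\phi_1(0)+\phi_1(0)\phi_2(0)$ holds automatically since $\phi_1(0)+\phi_1(0)\phi_2(0)>\eta_0+\eta_0^2>1$ for $\eta_0$ near $1$. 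One small caveat: your parenthetical claim that $\phi_1(0)>\tfrac12$ and $\phi_2(0)>0$ already suffice for the algebraic condition is false (e.g.\ $\phi_1(0)=0.6$, $\phi_2(0)=0.1$ gives $0.66<1$), but this aside plays no role since your main limit argument is what carries the step.
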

The uniqueness result can be generalized to high dimension. Homogeneous spaces on $\mathbb{S}^n$ have been classified by D. Montgomery and H. Samelson (\cite{MS}), and A. Borel (\cite{Borel1}, \cite{Borel2}), see \cite{Besse} (p.179) and also \cite{Ziller1} for instance. Up to a scaling factor and isometry, homogeneous metrics on spheres are in one of the three classes: a one parameter family of $\text{SU}(k+1)$-invariant metrics on $\mathbb{S}^{2k+1}\cong \text{SU}(k+1)/ \text{SU}(k)$ $(k\geq1)$, a three parameter family of $\text{Sp}(k+1)$-invariant metrics on $\mathbb{S}^{4k+3}\cong \text{Sp}(k+1)/\text{Sp(k)}$ (containing the $\text{SU}(2k+1)$-invariant metrics as a subset in these dimensions) with $k\geq0$, and a one parameter family of $\text{Spin}(9)$-invariant metrics on $\mathbb{S}^{15}\cong \text{Spin}(9)/ \text{Spin}(7)$. For more details, see Section \ref{section4}, and also \cite{Ziller1}. As the Berger metric case, for the first two classes of homogeneous metric $\hat{g}$, the prescribed conformal infinity problem of the CCE metrics is deformed into a two-point boundary value problem of a system of ODEs on the interval $x\in[0,1]$, see $(\ref{equn_SUnEinstein01})-(\ref{equn_SUnBV01})$ (with two functions $(y_1(x),\,y_2(x))$) and $(\ref{equn_SpnEinstein01})-(\ref{equn_SpnBV01})$ (with four functions $(y_1(x),..,y_4(x))$) correspondingly. The third class could be done similarly. Using the same approach of the uniqueness argument for the Berger metric case in \cite{Li} based on a monotonicity argument and an integral version comparison theorem, we show that
\begin{thm}\label{thm_someSUnmetric}
Let $\hat{g}$ be a homogeneous metric on $\mathbb{S}^n\cong \text{SU}(k+1)/\text{SU}(k)$ with $n=2k+1$ for $k\geq 1$ so that $\hat{g}$ has the standard diagonal form
\begin{align}\label{eqn_SUnstandardmetricform}
\hat{g}=\lambda_1\sigma_1^2+\lambda_2(\sigma_2^2+..+\sigma_n^2),
\end{align}
at a point where $\lambda_1$ and $\lambda_2$ are two positive constants and $\sigma_1,..,\sigma_n$ are the $1$-forms with respect to the basis vectors in $\mathfrak{p}$, in the $\text{Ad}_{SU(k)}$-invariant splitting $su(k+1)=su(k)\oplus \mathfrak{p}$. Assume that $\frac{1}{n+1}<\frac{\lambda_1}{\lambda_2}<n+1$, then up to isometry there exists at most one non-positively curved conformally compact Einstein metric on the $(n+1)$-ball $B_1(0)$ with $(\mathbb{S}^n, [\hat{g}])$ as its conformal infinity. In particular, it is the perturbation metric in \cite{GL} when $\frac{\lambda_1}{\lambda_2}$ is close to $1$. Moreover, by Theorem \ref{EHBoundary} (see also \cite{LiQingShi}), for $\frac{\lambda_1}{\lambda_2}$ close enough to $1$, any conformally compact Einstein manifold filled in is automatically negatively curved and simply connected, and therefore it is unique up to isometry.
\end{thm}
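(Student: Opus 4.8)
The plan is to follow the strategy used for the Berger sphere in \cite{Li}: reduce the prescribed conformal infinity problem to a two-point boundary value problem for a system of ODEs, establish the monotonicity of the solution components, and then run an integral comparison argument to force any two solutions to coincide. First I would invoke the Killing field extension result from \cite{Li} to conclude that a non-positively curved CCE metric $g$ on $B_1(0)$ with $\text{SU}(k+1)$-invariant conformal infinity is itself $\text{SU}(k+1)$-invariant on every geodesic sphere centered at the center of gravity $p_0$; since $\text{SU}(k+1)$ acts on $\mathbb{S}^{2k+1}=\text{SU}(k+1)/\text{SU}(k)$ with two-parameter invariant metrics, in the geodesic defining function $x=e^{-r}$ the metric is completely described by two profile functions. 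This reduces $\text{Ric}(g)=-ng$ to the degenerate system $(\ref{equn_SUnEinstein01})$--$(\ref{equn_SUnBV01})$ in the unknowns $(y_1(x),y_2(x))$ on $x\in[0,1]$: at $x=0$ the data is fixed by the conformal class, i.e.\ by $\lambda_1/\lambda_2$, while at $x=1$ smoothness of $g$ at $p_0$ forces the metric to be round there, pinning the values of $y_1,y_2$ and their relevant derivatives.

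Next I would establish the monotonicity of the natural ratio $\phi=y_1/y_2$ (the squared ratio of the Hopf-direction scale to the horizontal scale) along $x\in[0,1]$. The key structural facts are $\phi(1)=1$ by roundness at the center and $\phi(0)=\lambda_1/\lambda_2$ at infinity, while the sign of $\phi'$ is controlled by the ODE together with the non-positive curvature hypothesis. Here the assumption $\frac{1}{n+1}<\frac{\lambda_1}{\lambda_2}<n+1$ plays its role: it is exactly the range in which $\phi$ stays monotone between $\lambda_1/\lambda_2$ and $1$, hence remains trapped in $\big(\tfrac{1}{n+1},\,n+1\big)$ over the whole interval and bounded away from the degenerate configurations of the system, so that the coefficients entering the comparison keep a definite sign.

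For the uniqueness itself I would take two solutions $(y_{11},y_{12})$ and $(y_{21},y_{22})$ with the same conformal infinity, set $z_i=y_{1i}-y_{2i}$, and subtract the two copies of the system to obtain linear-in-$z$ equations. Writing each equation in divergence form $(p_i z_i')'=(\text{linearized lower-order terms})$ with weights $p_i$ that vanish at the endpoints, I would multiply by integrating factors chosen—as in the Berger case—to exploit the monotonicity of the $y_i$, and then integrate over $[0,1]$. The boundary contributions $p_i z_i' z_i$ vanish at $x=0$ because of the matching of the conformal data together with the degeneracy (vanishing of $p_i$) there, and at $x=1$ because of the common round value at the center; what remains is an integral of a manifestly signed quadratic expression in $(z_1,z_2,z_1',z_2')$, whose vanishing forces $z_1\equiv z_2\equiv 0$.

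The hard part will be the integral comparison step, specifically producing integrating factors for which the bulk integrand has a definite sign across the entire interval despite the two endpoints degenerating in different ways; as in \cite{Li}, this may require splitting $[0,1]$ and using different weights on different subintervals while matching the contributions at the break points. The constraint $\frac{1}{n+1}<\frac{\lambda_1}{\lambda_2}<n+1$ is precisely what guarantees that the monotonicity from the previous step points in the direction making this definiteness work; outside this range, or with a third interacting function as in the fully generalized Berger system, one expects the sign to fail, which is exactly why that case needs the separate contradiction argument of Theorem \ref{thm_Einsteineqnsbvpuniqueness1}. Once uniqueness is established, the identification with the Graham--Lee perturbation metric for $\lambda_1/\lambda_2$ near $1$, and the final negative-curvature and simple-connectivity statement, follow from Theorem \ref{EHBoundary} and \cite{LiQingShi} exactly as in the cited results.
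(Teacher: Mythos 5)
Your proposal follows essentially the same route as the paper: reduction via the Killing-field extension of \cite{Li} to the two-point boundary value problem $(\ref{equn_SUnEinstein01})$--$(\ref{equn_SUnBV01})$, monotonicity of $K$ and $\phi$ (Lemma \ref{lem_monotonicity01}), and then the integral comparison/contradiction argument of Theorem 5.4 in \cite{Li} with the integrating factor adapted to dimension $n$ (Theorem \ref{thm_SUnmetricStability}), followed by the perturbation and curvature-pinching conclusions. One small correction: the monotonicity holds for every $\phi(0)\neq 1$, and the hypothesis $\frac{1}{n+1}<\lambda_1/\lambda_2<n+1$ enters instead through the lower bound on $K(0)$ coming from $(\ref{ineqn_lowerboundKSUn1})$ and the sign control in Lemma \ref{lem_zeroz1z2}; also $\phi=I_2/I_1$ with $y_2=\log\phi$, not $y_1/y_2$.
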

After that we consider the existence result. Recall that for given real analytic data i.e., the conformal infinity $(\partial M, [\hat{g}])$ and the non-local term in the expansion in Theorem \ref{thm_expansion1}, existence of CCE metrics in a neighborhood of the boundary $\partial M$ is proved by Fefferman and Graham in \cite{FG} for $\partial M$ of odd dimension and by Kichenassamy in \cite{SKichenassamy} for even dimensional boundary, and for $C^{\infty}$ data at conformal infinity, see Gursky and Sz$\acute{\text{e}}$kelyhidi \cite{GS}. Anderson \cite{Anderson1} studied the existence of CCE metrics on $B_1^4$ with general prescribed conformal infinity $(\mathbb{S}^3, [\hat{g}])$ using the continuity method. Recently, Gursky and Han \cite{GH} showed that there are infinitely many Riemannian metrics $\hat{g}$ on $\mathbb{S}^7$ lying in different connected components of the set of positive scalar curvature metrics such that there exists no CCE metrics on the unit Euclidean ball $B^8$ with $(\mathbb{S}^7, [\hat{g}])$ as its conformal infinity and pointed out that similar phenomena holds for higher dimensions.

By \cite{Li}, the CCE manifold which is Hadamard with homogeneous conformal infinity, is {\it of cohomogeneity one}. Calculations of the curvature tensors on manifolds of cohomogeneity one can be found in \cite{GZ}. Recently, using Schauder degree theory, Buttsworth \cite{TB} showed that for two $G$-invariant Riemannian metrics $\hat{g}_1$ and $\hat{g}_2$ on a compact homogeneous space $G/H$, if the isotropy representation of $G/H$ consists of pairwise inequivalent irreducible
summands, then there exists an Einstein metric on $G/H \times [0,1]$ such that when restricted on $G/H \times\{0\}$ and $G/H\times \{1\}$, $g$ coincides with $\hat{g}_1$ and $\hat{g}_2$ respectively.

We prove a compactness result of a sequence of non-positively curved CCE metrics with their conformal infinity $(\mathbb{S}^n, [\hat{g}_j])$ $(j\geq 1)$, where $\hat{g}_j$ is of the form $(\ref{eqn_SUnstandardmetricform})$ and uniformly bounded. Using this compactness result and Graham-Lee and Lee's perturbation result in \cite{GL} and \cite{Lee}, we show the following existence theorem by the continuity method.

\begin{thm}\label{thm_someSUnmetric1}
Let $B_1\subseteq \mathbb{R}^{n+1}$ be the unit ball on the Euclidean space with the unit sphere $\mathbb{S}^n$ as its boundary. Assume that $n=2k+1$ for some integer $k\geq 1$. Let $\hat{g}^{\lambda}$ be a homogeneous metric on the boundary $\mathbb{S}^n\cong \text{SU}(k+1)/\text{SU}(k)$ 
so that $\hat{g}$ has the standard diagonal form
\begin{align*}
\hat{g}^{\lambda}=\sigma_1^2+\lambda(\sigma_2^2+..+\sigma_n^2),
\end{align*}
 at a point where $\lambda$ is a positive constant and $\sigma_1,..,\sigma_n$ are the $1$-forms with respect to the basis vectors in $\mathfrak{p}$ in the $\text{Ad}_{SU(k)}$-invariant splitting $su(k+1)=su(k)\oplus \mathfrak{p}$. Then as the parameter $\lambda$ varies from $\lambda=1$ continuously on the interval $(\frac{1}{(n+1)}, 1]$ (resp. on the interval $[1, n+1)$), either it holds that there exists a conformally compact Einstein metric on $B_1$ which is non-positively curved with $(\mathbb{S}^n, [\hat{g}^{\lambda}])$ as its conformal infinity for each $\lambda\in (\frac{1}{n+1}, 1]$ (resp. $\lambda\in [1, n+1)$); or there exists $\lambda_1\in(\frac{1}{n+1}, 1]$ (resp. $\lambda_1\in[1, n+1)$), such that for each $\lambda\in [\lambda_1, 1]$ (resp. $\lambda\in[1,\lambda_1]$) there exists a conformally compact Einstein metric $g^{\lambda}$ on $B_1$ which is non-positively curved with $(\mathbb{S}^n, [\hat{g}^{\lambda}])$ as its conformal infinity and there exists $p\in B_1$ such that the sectional curvature of $g^{\lambda_1}$ is zero in some direction at $p$ and moreover, for any $\epsilon>0$ small there exists $\lambda_2\in(\lambda_1-\epsilon, \lambda_1)$ (resp. $\lambda_2\in(\lambda_1, \lambda_1+\epsilon)$) such that there exists a conformally compact Einstein metric $g^{\lambda_2}$ on $B_1$ with $(\mathbb{S}^n, [\hat{g}^{\lambda_2}])$ as its conformal infinity and the sectional curvature of $g^{\lambda_2}$ is positive in some direction at some point $p\in B_1$.
\end{thm}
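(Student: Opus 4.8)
The plan is to run the continuity method in the parameter $\lambda$, using the uniqueness Theorem \ref{thm_someSUnmetric} to make the family of fillings single-valued, Lee's perturbation theory for openness, and the compactness result announced above for closedness; the dichotomy in the statement then emerges from the parameter at which strict negativity of the curvature first degenerates. Treating the interval $(\frac{1}{n+1},1]$ (the case $[1,n+1)$ being identical), I would set
\begin{align*}
A = \bigl\{\lambda \in (\tfrac{1}{n+1}, 1] : (\mathbb{S}^n, [\hat{g}^\lambda]) \text{ is the conformal infinity of a non-positively curved CCE metric on } B_1\bigr\},
\end{align*}
and let $U \subseteq A$ be the subset for which such a filling is \emph{strictly} negatively curved. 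Since the ratio of the two eigenvalues of $\hat{g}^\lambda$ equals $1/\lambda$, which lies in $(\frac{1}{n+1}, n+1)$ for every $\lambda$ in the stated range, the hypothesis of Theorem \ref{thm_someSUnmetric} is met, so for each $\lambda \in A$ the non-positively curved filling $g^\lambda$ is unique and $A$ carries a well-defined family $\lambda \mapsto g^\lambda$. The base point $\lambda=1$ gives the round conformal infinity, whose filling is hyperbolic space, so $1 \in U$ and both sets are nonempty.

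For \emph{openness} of $U$: at a strictly negatively curved $g^\lambda$ the linearized Einstein operator is an isomorphism in the weighted Hölder spaces of Graham--Lee and Lee (\cite{GL},\,\cite{Lee}), so for $\lambda'$ near $\lambda$ there is a nearby CCE metric $g^{\lambda'}$ with conformal infinity $[\hat{g}^{\lambda'}]$, which by uniqueness is the filling. To see that $g^{\lambda'}$ is again strictly negatively curved I would use the uniform asymptotics of asymptotically hyperbolic Einstein metrics: outside a fixed compact set $K$ the sectional curvatures are uniformly close to $-1$, while on $K$ the strict negativity of $g^\lambda$ is bounded away from $0$ and persists under a $C^2$-small perturbation. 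Hence $U$ is open. For \emph{closedness} I would invoke the compactness result: any sequence of non-positively curved CCE fillings with conformal infinities $\hat{g}^{\lambda_j}$ of the prescribed form and $\lambda_j$ ranging in a fixed compact subinterval of $(\frac{1}{n+1},1]$ subconverges to a CCE metric with conformal infinity $\hat{g}^{\lambda_*}$, and non-positivity of the curvature passes to the limit, so $\lambda_* \in A$; thus $A$ is closed in $(\frac{1}{n+1},1]$.

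Now set $\lambda_1 = \inf\{a : [a,1]\subseteq A\}$. Closedness gives $[\lambda_1,1]\subseteq A$ and produces a non-positively curved filling $g^{\lambda_1}$. If $\lambda_1=\frac{1}{n+1}$ then $(\frac{1}{n+1},1]\subseteq A$ and we are in the first alternative. Otherwise $g^{\lambda_1}$ cannot be strictly negatively curved: were it so, then $\lambda_1\in U$, and openness would force $(\lambda_1-\delta,1]\subseteq A$, contradicting the definition of $\lambda_1$. Therefore the sectional curvature of $g^{\lambda_1}$, which is $\leq 0$ everywhere and tends to $-1$ near conformal infinity, must attain the value $0$ in some direction at an interior point $p\in B_1$, the desired degeneration point. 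Finally, the perturbation theorem still applies at $g^{\lambda_1}$ since it is non-positively curved (this is exactly the generality in which Lee's result holds), so for every $\lambda_2\in(\lambda_1-\epsilon,\lambda_1)$ there is a CCE metric $g^{\lambda_2}$ with conformal infinity $[\hat{g}^{\lambda_2}]$. As $\lambda_2<\lambda_1=\inf A$ gives $\lambda_2\notin A$, \emph{no} filling of $\hat{g}^{\lambda_2}$ is non-positively curved; in particular $g^{\lambda_2}$ has positive sectional curvature in some direction at some point of $B_1$, which is the second alternative.

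The main obstacle is the closedness step, i.e. the compactness of the family of non-positively curved CCE fillings. The difficulty is genuinely analytic: the manifolds are complete and non-compact, so one must produce uniform a priori bounds that simultaneously control the interior geometry and the boundary regularity as $\lambda$ ranges over a compact subinterval, ruling out collapse and loss of compactness at conformal infinity. I expect this to rest on the two-point boundary value problem $(\ref{equn_SUnEinstein01})$--$(\ref{equn_SUnBV01})$ together with the non-positive curvature hypothesis and the Weyl-tensor estimate of Theorem \ref{EHBoundary}, which should pin the solutions down in $C^{2,\alpha}$ up to the boundary and yield the convergent subsequence; the delicate point is the behaviour at the two degenerate endpoints $x=0$ and $x=1$, where the ellipticity of the system breaks down.
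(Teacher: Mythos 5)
Your overall skeleton coincides with the paper's: a continuity method in $\lambda$ starting from the hyperbolic filling at $\lambda=1$, with openness supplied by the Graham--Lee/Lee perturbation theory and closedness supplied by a compactness statement for non-positively curved CCE fillings. Your bookkeeping of the dichotomy (the sets $A$ and $U$, the degeneration of strict negativity at $\lambda_1$, and the use of Lee's theorem at the merely non-positively curved metric $g^{\lambda_1}$ to produce fillings for $\lambda_2<\lambda_1$) is in fact spelled out more explicitly than in the paper, which compresses this into ``a direct argument of continuity method concludes the theorem.'' One small repair is needed there: you define $\lambda_1=\inf\{a:[a,1]\subseteq A\}$ but later argue as if $\lambda_1=\inf A$, and $\lambda_2<\lambda_1$ does \emph{not} force $\lambda_2\notin A$ since $A$ need not be an interval a priori. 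The conclusion survives in the weaker form the theorem actually asserts: if every $\lambda_2\in(\lambda_1-\epsilon,\lambda_1)$ had a non-positively curved Lee perturbation, then $(\lambda_1-\epsilon,1]\subseteq A$, contradicting the definition of $\lambda_1$; hence \emph{some} $\lambda_2$ in that interval carries a filling with positive curvature in some direction.

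The genuine gap is the closedness step, which you invoke as ``the compactness result'' but do not prove; in the paper this result is not a free-standing citable theorem but is established inside the proof of Theorem \ref{thm_someSUnmetric1} itself, and it constitutes essentially all of the work. You correctly identify the ingredients --- the reduction to the degenerate two-point boundary value problem $(\ref{equn_SUnEinstein01})$--$(\ref{equn_SUnBV01})$, the a priori estimates of Lemma \ref{lem_uniformestsn01} handling the degeneracy at $x=0$, and the pointwise Weyl bound $|W|_g\le\sqrt{(n^2-1)n}$ coming from non-positive curvature and the Einstein equation (not Theorem \ref{EHBoundary}) handling $x=1$ via the argument of Lemma \ref{lem_yiboundrightGB} --- but you omit the gauge-fixing that makes the limit meaningful: the paper first pulls back each $g^{\lambda_j}$ by a diffeomorphism $H_j$ built from the exponential maps at the centers of gravity $p_0^j$, so that all metrics share a common center $p_0$ and a common geodesic defining function $x=e^{-r}$, after which the ODE estimates give $C^{2,\alpha}$ subconvergence on $x\in[0,\tfrac12]$ and interior elliptic estimates give $C^k$ subconvergence on $x\in[\tfrac12,1]$, yielding pointed Cheeger--Gromov convergence to a non-positively curved CCE limit with the correct conformal infinity. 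Without this normalization (or some substitute), a sequence of fillings could drift and the asserted subconvergence does not follow from the estimates alone; so as written the closedness claim is an unproved assertion rather than a proof.
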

This existence result can be viewed as a generalization of Pedersen's result in \cite{Pedersen} to higher dimensions. For $n=3$, the constant $\lambda_1$ in Theorem \ref{thm_someSUnmetric1} can be calculated explicitly by Pedersen's explicit solutions. Also, a similar existence result holds for the generalized Berger metric case (which should be the Hitchin's metric for $\hat{g}$ close to the round sphere by uniqueness), since we have the same compactness result for it. Notice that the uniqueness result and existence result should hold for the conformal infinity $(\mathbb{S}^{15}, [\hat{g}])$ with $\hat{g}$ a $\text{Spin}(9)$-invariant metric by the same argument, due to the fact that in the system of ODEs obtained there are only two unknown functions.

The uniqueness and existence of the conformally compact Einstein metric with prescribed
conformal infinity $(\mathbb{S}^{4k+3}, [\hat{g}])$ where $\hat{g}$ is $\text{Sp}(k+1)$-invariant will be discussed else where.

In Section \ref{section3}, we first show that each function $y_i(x)$ in the solution $(y_1,y_2,y_3)$ to the boundary value problem $(\ref{equn_GBergerEinstein01})-(\ref{equn_GBergerBV01})$ is monotone on $x\in[0,1]$, see Lemma \ref{lem_monotonicity301}. Based on this and the elliptic system, we obtain the interior estimate of the solution in $x\in(0,1)$. Then we employ certain integrating factors to deal with the degeneracy of the equations at $x=0$ and obtain an a priori estimate of the solutions on $x\in[0, \frac{3}{4}]$, see Lemma \ref{lem_uniformests301}. That fails to work near $x=1$. We have to use the boundedness of the Weyl tensor for the non-positively curved Einstein metric and the Einstein equation to give an estimate of the solution near $x=1$, 
see Lemma \ref{lem_yiboundrightGB}. Then we go to the proof of Theorem \ref{thm_Einsteineqnsbvpuniqueness1} by a contradiction argument. Assume we have two solutions $(y_{11}, y_{12}, y_{13})$ and $(y_{21}, y_{22}, y_{31})$, we consider the total variation of each function $z_i$ in the difference $(z_1,z_2,z_3)$ of the two solutions. By employing certain integrating factors, we show that on some "good" intervals of monotonicity of $z_2$ ( resp. $z_3$), the total variation of $z_2$ (resp. $z_3$) is controlled by the summation of those of $z_1$ and $z_3$ (resp. $z_2$) with relatively small coefficients, and the "smallness" of the coefficients is due to the "smallness" of $(1-\varepsilon_0)\sup_M|W|_g$ by the choice of $\varepsilon_0\in(0,1)$ and the closeness of the initial data to that of the round metric in the estimates of the solutions on $x\in[0,\varepsilon_0]$. Controls on the total variation of $z_1$ (by the summation of those of $z_2$ and $z_3$ with small coefficients) holds on each interval of monotonicity of $z_1$ by a different integral argument. Therefore, if either we have the estimate that the normal of the Weyl tensor is small, or we just choose $\varepsilon_0$ close to $1$ due to the uniform bound of the Weyl tensor, we can choose $0<\varepsilon_0<1$ so that the total variation of $z_i$ is well controlled on $x\in[\varepsilon_0, 1]$ by those of the other two, and it is also true on $x\in[0,\varepsilon_0]$ by the estimates based on the initial data, provided that the initial data is not quite far from that with respect to the round metric. Then we obtain that the total variation of each $z_i$ vanishes on $x\in[0,1]$, which proves the uniqueness result in Theorem \ref{thm_Einsteineqnsbvpuniqueness1}.

In Section \ref{section4}, for the conformal infinity $(\mathbb{S}^n, [\hat{g}])$ of general dimensions, with $\hat{g}$ a homogeneous metric, we use the symmetry extension to reduce the Einstein equations with the prescribed conformal infinity to a two-point boundary value problem of a system of ODEs on $x\in[0,1]$, see $(\ref{equn_SUnEinstein01})-(\ref{equn_SUnBV01})$ when $\hat{g}$ is $\text{SU}(k+1)$-invariant and $(\ref{equn_SpnEinstein01})-(\ref{equn_SpnBV01})$ when $\hat{g}$ is $\text{Sp}(k+1)$-invariant. Then by the same argument as the Berger metric case in \cite{Li}, we prove Theorem \ref{thm_someSUnmetric}. Based on the monotonicity of the solutions proved in Lemma \ref{lem_monotonicity01}, we give a uniform $C^3$ estimate of the solution to the boundary value problem $(\ref{equn_SUnEinstein01})-(\ref{equn_SUnBV01})$ on $x\in[0, \frac{3}{4}]$ in Lemma \ref{lem_uniformestsn01}. Then combining with the interior estimate of the Einstein metric based on the uniform bound of the Weyl tensor, we get a compactness result of the non-positively curved conformally compact Einstein metrics with bounded conformal infinity. And hence with the aid of the perturbation result in \cite{GL} and \cite{Lee}, we use the continuity method to prove the existence result in Theorem \ref{thm_someSUnmetric1}. Notice that the global compactness estimate on CCE metrics is difficult to obtain in general, see \cite{Anderson1}.

\vskip0.2cm
{\bf Acknowledgements.} The author would like to thank Professor Jie Qing and Professor Yuguang Shi for helpful discussion and constant support.  The author is grateful to Professor Fuquan Fang and Xiaoyang Chen for helpful discussion on homogeneous spaces.

\section{Preliminaries}\label{Sect:preliminary}

\begin{defn}\label{defn_conformallycompactEinsteinmetric}
Suppose $M$ is the interior of a smooth compact manifold $\overline{M}$ of dimension $n+1$ with boundary $\partial M$. A defining function $x$ on $\overline{M}$ is a smooth function $x$ on $\overline{M}$ such that
\begin{align*}
x>0\,\,\text{ in}\,\, M,\,\, x=0\,\,\text{and}\,\,dx\neq 0\,\, \text{on}\,\,\partial M.
\end{align*}
A complete Riemannian metric $g$ on $M$ is said to be conformally compact if there exists a defining function $x$ such that $x^2g$ extends by continuity to a Riemannian metric (of class at least $C^0$) on $\overline{M}$. The rescaled metric $\bar{g}=x^2g$ is called a conformal compactification of $g$. If for some smooth defining function $x$, $\bar{g}$ is in $C^k(\overline{M})$ or the Holder space $C^{k,\alpha}(\overline{M})$, we say $g$ is conformally compact of class $C^k$ or $C^{k, \alpha}$. Moreover, if $g$ is also Einstein, we call $g$ a conformally compact Einstein (CCE) metric. Also, for the restricted metric $\hat{g}=\bar{g}\big|_{\partial M}$, the conformal class $(\partial M, [\hat{g}])$ is called the conformal infinity of $(M, g)$. A defining function $x$ is called a geodesic defining function about $\hat{g}$ if $\hat{g}=\bar{g}\big|_{\partial M}$ and $|dx|_{\bar{g}}=1$ in a neighborhood of the boundary.
\end{defn}
One can easily check that a CCE metric $g$ on $M$ satisfies
\begin{align}\label{eqn_Einstein}
Ric_g=-n g.
\end{align}

Let $(M^{n+1}, g)$ be a Hadamard manifold, 
and we also assume that $(M, g)$ is a CCE manifold with its conformal infinity $(\partial M, [\hat{g}])$. By the non-positivity of the sectional curvature of $g$ and $(\ref{eqn_Einstein})$, we have $-n\leq K\leq 0$ for any sectional curvature $K$ and $|W|_g\leq \sqrt{(n^2-1)n}$ at any point $p\in M$. It is shown in \cite{Li} that if $(M, g)$ is not the hyperbolic space, there exists a unique point $p_0\in M$ which is the center of the unique closed geodesic
ball of the smallest radius that contains the set $S\equiv\{p\in M \big| |W|_g(p)=\sup_M|W|_g\}$. We call $p_0$ the {\it (spherical) center of
gravity of } $(M, g)$. Each conformal Killing vector field $Y$ on $(\partial M, [\hat{g}])$ can be extended continuously to a Killing vector field $X$ on $(M,g)$, with each geodesic sphere centered at $p_0$ an invariant subset of the action generated by $X$ (see \cite{Li}). Let $r$ be the distance function to $p_0$ on $(M, g)$. Then $g$ has the orthogonal splitting
\begin{align}\label{eqn_metrictwocomponents}
g=dr^2+g_r,
\end{align}
with $g_r$ the restriction of $g$ on the $r$-geodesic sphere centered at $p_0$. If moreover, $(\partial M, \hat{g})$ is a homogeneous space, then the function $x=e^{-r}$ is a geodesic defining function about $C\hat{g}$ with $C>0$ some constant. That is to say,  $\hat{g}=C\displaystyle\lim_{x\to 0}(x^2g_r)$. We also have the form
\begin{align}\label{equn_splittingmetric01}
g=dr^2+\sinh^2(r)\bar{h}=x^{-2}(dx^2+\frac{(1-x^2)^2}{4}\bar{h}),
\end{align}
for $0\leq x \leq 1$. Let $(r, \theta)$ be the polar coordinates centered at $p_0$.

Let $\hat{g}$ be a generalized Berger metric on $\mathbb{S}^3$ of the standard form
\begin{align*}
\hat{g}=\lambda_1 \sigma_1^2\,+\,\lambda_2d\sigma_2^2\,+\,\lambda_3 d\sigma_3^2,
\end{align*}
with $\sigma_1, \,\sigma_2$ and $\sigma_3$ three $\text{SU}(2)$-invariant $1$-forms, where $\lambda_1, \lambda_2$ and $ \lambda_3$ differ from one another. It is shown in \cite{Li} that the metrics $\bar{h}$ in $(\ref{equn_splittingmetric01})$ on the geodesic spheres have the diagonal form
\begin{align*}
\bar{h}=I_1(x)d(\theta^1)^2+I_2(x)d(\theta^2)^2+I_3(x)d(\theta^3)^2,
\end{align*}
at a point $(x, \theta_0)$ under the coordinates $(x,\theta)=(x, \theta^1,\theta^2,\theta^3)$ such that $d\theta^i=\sigma_i$ (with $\sigma_i$ an $\text{SU}(2)$-invariant $1$-form) at $\theta=\theta_0$ for $0 \leq x \leq 1$, with some positive functions $I_i\in C^{\infty}([0,1])$ satisfying $I_i(1)=1$, $i=1,2,3$.

Denote $K=I_1I_2I_3$, $\phi_1=\frac{ I_2}{I_1}$, $\phi_2=\frac{I_3}{I_2}$, $y_1=\log(K)$, $y_2=\log(\phi_1)$ and $y_3=\log(\phi_2)$, so that
\begin{align*}
I_1=(K\phi_1^{-2}\phi_2^{-1})^{\frac{1}{3}},\,\,I_2=(K\phi_1\phi_2^{-1})^{\frac{1}{3}},\,\,I_3=(K\phi_1\phi_2^2)^{\frac{1}{3}}.
\end{align*}

It was shown in \cite{Li} that for a generalized Berger metric $\hat{g}$ on $\mathbb{S}^3$ with $\lambda_1, \lambda_2, \lambda_3$ different from one another, the Einstein equations $(\ref{eqn_Einstein})$ with the prescribed conformal infinity $(S^3, [\hat{g}])$ is equivalent to
\begin{align}
&\label{equn_GBergerEinstein01}y_1''-x^{-1}(1+3x^2)(1-x^2)^{-1}y_1'+\frac{1}{6}(y_1')^2+\frac{1}{3}[(y_2')^2+y_2'y_3'+(y_3')^2]=0,\\
&\label{equn_GBergerEinstein02}y_1''-x^{-1}(5+7x^2)(1-x^2)^{-1}y_1'+\frac{1}{2}(y_1')^2+16(1-x^2)^{-2}[3-2K^{-\frac{1}{3}}(\phi_1^2\phi_2)^{\frac{1}{3}}-2K^{-\frac{1}{3}}(\phi_1^{-1}\phi_2)^{\frac{1}{3}}\\
&-2K^{-\frac{1}{3}}(\phi_1\phi_2^2)^{-\frac{1}{3}}+K^{-\frac{1}{3}}\phi_1^{-\frac{4}{3}}\phi_2^{-\frac{2}{3}}+K^{-\frac{1}{3}}\phi_1^{\frac{2}{3}}\phi_2^{-\frac{2}{3}}+K^{-\frac{1}{3}}\phi_1^{\frac{2}{3}}\phi_2^{\frac{4}{3}}]=0,
\notag \\
&\label{equn_GBergerEinstein03}y_2''-2x^{-1}(1+2x^2)(1-x^2)^{-1}y_2'+\frac{1}{2}y_1'y_2'+32(1-x^2)^{-2}K^{-\frac{1}{3}}[\phi_1^{\frac{2}{3}}\phi_2^{\frac{1}{3}}-\phi_1^{-\frac{1}{3}}\phi_2^{\frac{1}{3}}-\phi_1^{\frac{2}{3}}\phi_2^{-\frac{2}{3}}+\phi_1^{-\frac{4}{3}}\phi_2^{-\frac{2}{3}}]=0,\\
&\label{equn_GBergerEinstein04}y_3''-2x^{-1}(1+2x^2)(1-x^2)^{-1}y_3'+\frac{1}{2}y_1'y_3'+32(1-x^2)^{-2}K^{-\frac{1}{3}}[\phi_1^{-\frac{1}{3}}\phi_2^{\frac{1}{3}}-\phi_1^{-\frac{1}{3}}\phi_2^{-\frac{2}{3}}-\phi_1^{\frac{2}{3}}\phi_2^{\frac{4}{3}}+\phi_1^{\frac{2}{3}}\phi_2^{-\frac{2}{3}}]=0,
\end{align}
for $y_i(x)\in C^{\infty}([0,1])$ for $i=1,2,3$ with the boundary condition
\begin{align}\label{equn_GBergerBV01}
\phi_1(0)=\frac{\lambda_2}{\lambda_1},\,\,\phi_2(0)=\frac{\lambda_3}{\lambda_2},\,\,K(1)=\phi_1(1)=\phi_2(1)=1,\,\,y_i'(0)=y_i'(1)=0,\,\,\,\text{for}\,\,i=1,2,3.
\end{align}
Combining $(\ref{equn_GBergerEinstein01})$ and $(\ref{equn_GBergerEinstein02})$ we have
\begin{align}\label{equn_GBergerEinstein05}
&(y_1')^2-[(y_2')^2+y_2'y_3'+(y_3')^2]-12x^{-1}(1+x^2)(1-x^2)^{-1}y_1'+48(1-x^2)^{-2}[3-2K^{-\frac{1}{3}}(\phi_1^2\phi_2)^{\frac{1}{3}}\\
&-2K^{-\frac{1}{3}}(\phi_1^{-1}\phi_2)^{\frac{1}{3}}-2K^{-\frac{1}{3}}(\phi_1\phi_2^2)^{-\frac{1}{3}}+K^{-\frac{1}{3}}\phi_1^{-\frac{4}{3}}\phi_2^{-\frac{2}{3}}+K^{-\frac{1}{3}}\phi_1^{\frac{2}{3}}\phi_2^{-\frac{2}{3}}+K^{-\frac{1}{3}}\phi_1^{\frac{2}{3}}\phi_2^{\frac{4}{3}}]=0\notag,
\end{align}
Recall that any three equations in the system of five equations $(\ref{equn_GBergerEinstein01})-(\ref{equn_GBergerEinstein04})$ and $(\ref{equn_GBergerEinstein05})$ containing at least one of $(\ref{equn_GBergerEinstein03})$ and $(\ref{equn_GBergerEinstein04})$, combining with the initial data imply the other two equations.

 The idea of symmetry extension approach in \cite{Li} is inspired by \cite{Wang}, see also \cite{Andersson-Dahl} and \cite{Anderson1}.

For $n=3$, assume $(M^4, g)$ is a CCE manifold with its conformal infinity $(\partial M, [\hat{g}])$. If we replace the non-positive sectional curvature condition by the condition that there exists a constant $\frac{1}{2} < \lambda \leq 1$ such that
\begin{align*}
Y(\partial M, [\hat{g}])\geq \lambda^{\frac{2}{3}}Y(\mathbb{S}^3,[g_0]),
\end{align*}
with $g_0$ the round sphere metric, then the upper bound estimate
 \begin{align}\label{ineqn_boundWeylHE1}
\sup_M |W|_g \leq T
\end{align}
still holds with some constant $T$ depending on $\lambda$, see Corollary 1.7 in \cite{LiQingShi}. The proof there is based on the control of the relative volume growth of
geodesic balls by the Yamabe constant at the conformal infinity and a blowing up argument relating to the relative volume growth. Moreover, we have the following curvature pinch estimates:

\begin{thm}\label{EHBoundary} (Theorem 1.6, \cite{LiQingShi}) For any $\epsilon >0$, there exists $\delta > 0$, for any conformally compact Einstein manifold $(M^{n+1}, g)$ ($n\geq 3$),
one gets
\begin{equation}\label{close-to-hyper}
|K [g] + 1| \leq \epsilon,
\end{equation}
for all sectional curvature $K$ of $g$, provided that
$$
Y(\partial M, [\hat{g}]) \geq (1-\delta) Y(\mathbb{S}^n, [g_{\mathbb{S}}]).
$$
Particularly, any conformally compact Einstein manifold with its conformal infinity of Yamabe constant sufficiently close to that of the round sphere is
necessarily negatively curved and also $(M, g)$ is simply connected.
\end{thm}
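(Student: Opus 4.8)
The plan is to argue by contradiction, reducing the pinching estimate to the rigidity of hyperbolic space among Einstein manifolds of asymptotically maximal volume growth, and to feed in the quantitative control of the interior volume growth by the Yamabe constant at infinity. Suppose the estimate \eqref{close-to-hyper} fails: there exist $\epsilon_0>0$, a sequence $\delta_j\to 0$, conformally compact Einstein manifolds $(M_j^{n+1},g_j)$ with $Y(\partial M_j,[\hat g_j])\geq (1-\delta_j)Y(\mathbb{S}^n,[g_{\mathbb{S}}])$, points $p_j\in M_j$ and $2$-planes $\pi_j\subset T_{p_j}M_j$ with $|K_{g_j}(\pi_j)+1|\geq \epsilon_0$. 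The goal is to extract a smooth pointed limit that is forced to be $\mathbb{H}^{n+1}$, contradicting the curvature deficit at $p_j$.

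First I would establish uniform geometric bounds. Since the Yamabe hypothesis holds with ratio tending to $1$, the already-cited bound \eqref{ineqn_boundWeylHE1} (Corollary 1.7 of \cite{LiQingShi}) gives $\sup_{M_j}|W|_{g_j}\leq T$ with $T$ independent of $j$; combined with $Ric_{g_j}=-n g_j$ this bounds the full curvature tensor uniformly, $|Rm_{g_j}|\leq C$. The crucial analytic input, which is the heart of the matter, is the comparison of the \emph{asymptotic volume ratio} with the Yamabe constant at infinity: building on \cite{ST}, \cite{WY} and \cite{DJ}, one controls
\begin{align*}
\lim_{r\to\infty}\frac{\mathrm{Vol}_{g_j}\!\big(B_r(p_j)\big)}{V_H(r)}\;\geq\;1-o(1)\qquad (j\to\infty),
\end{align*}
where $V_H(r)$ is the volume of an $r$-ball in $\mathbb{H}^{n+1}$; this uses the sharp isoperimetric/Sobolev behaviour at the conformal infinity together with the relative volume monotonicity afforded by $Ric_{g_j}\geq -n g_j$. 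By Bishop--Gromov monotonicity the ratio at \emph{every} radius then lies in $[1-o(1),1]$; in particular the unit balls satisfy $\mathrm{Vol}_{g_j}(B_1(p_j))\geq (1-o(1))V_H(1)$, ruling out collapse. Together with the two-sided curvature bound, Cheeger's lemma yields a uniform lower bound on the injectivity radius at $p_j$.

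With uniform bounds on curvature (and, via elliptic regularity in harmonic coordinates for Einstein metrics, on all covariant derivatives) and a positive injectivity radius, the Cheeger--Gromov--Hamilton compactness theorem produces a subsequence converging in the pointed $C^\infty_{loc}$ topology to a complete Einstein manifold $(M_\infty,g_\infty,p_\infty)$ with $Ric_{g_\infty}=-n g_\infty$, and the curvature deficit passes to the limit: there is a $2$-plane $\pi_\infty\subset T_{p_\infty}M_\infty$ with $|K_{g_\infty}(\pi_\infty)+1|\geq\epsilon_0$. On the other hand the volume control is preserved: the asymptotic volume ratio of $(M_\infty,g_\infty)$ equals $1$, so Bishop--Gromov monotonicity forces the ratio to be identically $1$ at all radii, and its rigidity case (equality for $Ric\geq -n g$) identifies every metric ball with a ball in $\mathbb{H}^{n+1}$. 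Hence $(M_\infty,g_\infty)$ is isometric to $\mathbb{H}^{n+1}$, where $K\equiv -1$, contradicting $|K_{g_\infty}(\pi_\infty)+1|\geq\epsilon_0$. This proves \eqref{close-to-hyper}.

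I expect the genuine obstacle to be the volume-ratio estimate, i.e.\ converting the closeness of $Y(\partial M,[\hat g])$ to $Y(\mathbb{S}^n,[g_{\mathbb{S}}])$ into a lower bound on the asymptotic volume ratio of the interior; this is precisely where the conformal geometry of the boundary must be linked to the interior metric through the Einstein equation and the expansion at infinity, and where the arguments of \cite{WY}, \cite{DJ} and \cite{LiQingShi} on relative volume growth do the real work. Finally, once $\epsilon<1$ is fixed one has $K_g\leq -1+\epsilon<0$, so $(M,g)$ is a Cartan--Hadamard manifold; since its conformal infinity is the simply connected sphere $\mathbb{S}^n$ ($n\geq 3$) and a nontrivial deck transformation would descend to a nontrivial free action on the ideal boundary $\mathbb{S}^n$, which admits no nontrivial connected covering, the group $\pi_1(M)$ must be trivial, giving the stated simple connectivity.
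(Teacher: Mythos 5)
This theorem is imported verbatim from \cite{LiQingShi} (Theorem 1.6 there); the present paper offers no proof, only the remark that the arguments in that reference rest on controlling the relative volume growth of geodesic balls by the Yamabe constant of the conformal infinity together with a blow-up/compactness argument. Your proposal follows exactly that strategy --- contradiction, volume-ratio control from the Yamabe hypothesis, pointed convergence, and Bishop--Gromov rigidity identifying the limit with $\mathbb{H}^{n+1}$ --- and you correctly isolate the volume-ratio estimate as the place where the conformal data at infinity enters. So the architecture of the pinching estimate is the intended one. Two steps, however, do not hold up as written.

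First, you feed the Weyl bound of Corollary 1.7 of \cite{LiQingShi} into Cheeger--Gromov--Hamilton compactness to get uniform curvature bounds. As quoted in this paper that corollary is stated only for $n=3$, whereas the theorem is claimed for all $n\geq 3$; moreover that Weyl bound is itself obtained in \cite{LiQingShi} by the same volume-growth-plus-blow-up technology, so invoking it here is both dimensionally too narrow and uncomfortably close to circular. The dimension-free route is to dispense with a priori curvature bounds altogether: once the relative volume of balls is within $o(1)$ of the hyperbolic value at every scale, Anderson's $\epsilon$-regularity/harmonic-radius estimate for Einstein metrics gives uniform local $C^{1,\alpha}$ control (upgraded to $C^\infty$ by elliptic regularity of the Einstein equation), which is what the compactness step actually needs. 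Second, the simple-connectivity argument is incorrect: deck transformations of a Cartan--Hadamard manifold act on its ideal boundary but not freely in general (an axial isometry fixes the two endpoints of its axis), and the visual boundary of the universal cover is not the conformal infinity of $M$, so no contradiction is produced. The correct argument is again volumetric: the universal cover $\widetilde M$ still satisfies $\mathrm{Ric}\geq -ng$, so Bishop--Gromov gives $\mathrm{Vol}\big(B_r(\tilde p)\big)\leq V_H(r)$, while for nontrivial $\pi_1(M)$ the volume of $B_r(\tilde p)$ asymptotically dominates $|\pi_1(M)|$ copies of $\mathrm{Vol}\big(B_r(p)\big)\geq (1-o(1))V_H(r)$; this forces $|\pi_1(M)|=1$ once the Yamabe ratio is close enough to $1$.
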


Recall that for any smooth metric $h\in[\hat{g}]$ at the conformal infinity, there exists a unique geodesic defining function $x$ about $h$ in a neighborhood of $\partial M$, see \cite{Graham}. For a CCE metric of $C^2$, based on \cite{Graham}, in \cite{CDLS} the authors proved the following regularity result.

\begin{thm}\label{thm_expansion1}
Assume $\overline{M}$ is a smooth compact manifold of dimension $n+1$, $n\geq 3$, with $M$ its interior and $\partial M$ its boundary. If $g$ is a conformally compact Einstein metric of class $C^2$ on $M$ with conformal infinity $(\partial M, [\gamma])$, and $\hat{g}\in [\gamma]$ is a smooth metric on $\partial M$. Then there exists  a smooth coordinates cover of $\overline{M}$ and a smooth geodesic defining $x$ corresponding to $\hat{g}$. Under this smooth coordinates cover, the conformal compactification $\bar{g}=x^2g$ is smooth up to the boundary for $n$ odd and has the expansion
\begin{align}\label{equn_expansion1}
\bar{g}=\,dx^2+g_x=\, dx^2+\hat{g}+x^2g^{(2)}+\,(\text{even powers})\,+x^{n-1} g^{(n-1)}+ x^{n}g^{(n)}+...
\end{align}
with $g^{(k)}$ smooth symmetric $(0,2)$-tensors on $\partial M$ such that for $2k<n$, $g^{(2k)}$ can be calculated explicitly inductively using the Einstein equations and $g^{(n)}$ is a smooth trace-free nonlocal term; while for $n$ even,  $\bar{g}$ is of class $C^{n-1}$, and more precisely it is polyhomogeneous and has the expansion
\begin{align}\label{equn_expansion2}
\bar{g}=\,dx^2+g_x=\, dx^2+\hat{g}+x^2g^{(2)}+\,(\text{even powers})\,+x^n\log(x) \tilde{g}+ x^{n}g^{(n)}+...
\end{align}
with $\tilde{g}$ and $g^{(k)}$ smooth symmetric $(0,2)$-tensors on $\partial M$, such that for $2k<n$, $g^{(2k)}$ and $\tilde{g}$ can be calculated explicitly inductively using the Einstein equations, $\tilde{g}$ is trace-free and $g^{(n)}$ is a smooth nonlocal term with its trace locally determined.
\end{thm}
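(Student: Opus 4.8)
The plan is to pass to the geodesic normal gauge determined by $\hat{g}$, rewrite the Einstein condition as a second-order ODE system in the radial variable $x$ with values in symmetric $2$-tensors on $\partial M$, analyze the regular singular point at $x=0$ to extract the formal expansion, and finally upgrade the formal series to a genuine asymptotic expansion of the $C^2$ solution by a degenerate-elliptic boundary regularity argument. First I would construct the geodesic defining function. Fixing the smooth representative $\hat{g}\in[\gamma]$, I seek $x$ with $\bar{g}=x^2g$ and $|dx|_{\bar{g}}=1$ near $\partial M$. Since $\bar{g}^{ij}=x^2g^{ij}$, this is equivalent to the eikonal equation $|d\log x|_g=1$, which I solve by the method of characteristics, integrating the $\bar{g}$-unit-speed geodesics issuing normally from $\partial M$; equivalently $-\log x$ is the $\bar{g}$-distance to the boundary. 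This yields the normal form $\bar{g}=dx^2+g_x$ with $g_x$ a one-parameter family of metrics on $\partial M$ and $g_0=\hat{g}$. Because $g$ is a priori only $C^2$ up to the boundary, at this stage $x$ and $g_x$ inherit only finite regularity, which I must track throughout.

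Next I would write $\mathrm{Ric}(g)=-ng$ in this gauge. Using the standard curvature identities for the conformal rescaling $g=x^{-2}\bar{g}$ with $\bar{g}=dx^2+g_x$, the tangential components become a second-order evolution system of the schematic form
\[
x\,\partial_x^2 g_x-(n-1)\,\partial_x g_x+Q(g_x,\partial_x g_x)+2x\,\mathrm{Ric}(g_x)=0,
\]
where $Q$ is quadratic in $\partial_x g_x$ and algebraic in $g_x$, while the $dx\,dx$- and $dx\,dx^i$-components give the Hamiltonian and momentum constraints. The crucial structural point is that $x=0$ is a regular singular point whose indicial polynomial is $\rho(\rho-n)$, so the indicial roots are $0$ and $n$.

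Then I would run the formal expansion. Substituting $g_x\sim\hat{g}+\sum_{j\geq 1}x^{2j}g^{(2j)}$ and matching powers, the recursion coefficient at order $2j$ is $2j(2j-n)$, nonzero whenever $2j<n$; hence $g^{(2)},g^{(4)},\dots$ are determined successively and explicitly, locally in $\hat{g}$, through $\mathrm{Ric}(g_x)$ and $Q$. At order $n$ the indicial root $n$ is reached: the homogeneous solution contributes the free term $g^{(n)}$, whose trace and divergence are pinned by the constraints in terms of local data while its trace-free transverse part remains undetermined by $\hat{g}$ — the non-local piece. For $n$ odd the even recursion never meets the resonance, the constraints force $\operatorname{tr}_{\hat{g}}g^{(n)}$ and $\operatorname{div}_{\hat{g}}g^{(n)}$ to vanish so that $g^{(n)}$ is trace-free and non-local, and the remaining expansion is by integer powers; for $n$ even the inhomogeneity resonates with the kernel, forcing the logarithmic term $x^n\log(x)\,\tilde{g}$ with $\tilde{g}$ local and trace-free (the ambient obstruction tensor), leaving $g^{(n)}$ with locally determined trace.

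The main obstacle, and the genuine content beyond the formal computation, is to show that the actual $C^2$ metric is truly asymptotic to this series — smooth up to the boundary for $n$ odd, polyhomogeneous for $n$ even — rather than merely admitting a formal one. My plan follows \cite{Graham} and \cite{CDLS}: first establish interior smoothness, since in $g$-harmonic coordinates the Einstein equation is elliptic and $g$ is therefore smooth (indeed real-analytic) in $M$; then near $\partial M$ fix a harmonic/DeTurck gauge relative to a smooth background asymptotically hyperbolic model so that the Einstein system becomes degenerate (zero-)elliptic and quasilinear, and bootstrap boundary regularity in weighted Hölder spaces adapted to the hyperbolic end. The delicate step is the analysis of the normal operator and its indicial roots, which forces the solution to carry a polyhomogeneous expansion whose coefficients must coincide with the formal ones; capturing precisely the $\log$ terms for even $n$ and the trace-free non-local $g^{(n)}$, while excluding spurious lower-order logs for odd $n$, is where the argument is most technical. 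This boundary-regularity mechanism is exactly what is carried out in \cite{CDLS}, building on \cite{Graham}.
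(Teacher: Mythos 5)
Your proposal is correct and follows essentially the same route as the paper: the paper gives no proof of this statement but quotes it as the regularity theorem of \cite{CDLS} (based on \cite{Graham} and the Fefferman--Graham formal expansion), and your sketch --- geodesic gauge, formal analysis of the regular singular point with indicial roots $0$ and $n$, then the degenerate-elliptic polyhomogeneity/smoothness upgrade --- is precisely the argument of those references, with the hard boundary-regularity step deferred to the very paper \cite{CDLS} that this paper cites. One small correction: since $|dx|_{\bar g}=1$ with $x=0$ on $\partial M$, it is $x$ itself, not $-\log x$, that equals the $\bar g$-distance to the boundary ($-\log x$ is instead comparable to the $g$-distance), though this slip does not affect your construction by characteristics.
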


\section{Uniqueness of Non-positively Curved Conformally Compact Einstein metrics with Generalized Berger Sphere Conformal Infinity}\label{section3}

 Recall that for a given Berger metric $\hat{g}$ on $\mathbb{S}^3$, we have proved the uniqueness of the non-positively curved CCE metrics $g$ with $(\mathbb{S}^3, [\hat{g}])$ as its conformal infinity (\cite{Li}).  In this section, for a given generalized Berger metric $\hat{g}$ on $\mathbb{S}^3$, we study the uniqueness of the non-positively curved CCE metrics $g$ satisfying $(\ref{eqn_Einstein})$ with $(\mathbb{S}^3, [\hat{g}])$ as its conformal infinity. By \cite{Li}, it is equivalent to show the uniqueness of the solution to the boundary value problem $(\ref{equn_GBergerEinstein01})-(\ref{equn_GBergerBV01})$.

 We start with the monotonicity of $y_i(x)$ $(i=1,2,3)$ for a global solution $(y_1,y_2,y_3)$ on $x\in[0,1]$. 
If either $\phi_1(0)=1$, or $\phi_2(0)=1$, or $\phi_1(0)\phi_2(0)=1$, then the metric $\hat{g}$ is a Berger metric, and the uniqueness of the solution to the boundary value problem has been proved in \cite{Li}. In this section we consider the generalized Berger metric $\hat{g}$ with $\phi_1(0)\neq 1$, $\phi_2(0)\neq 1$ and $\phi_1(0)\phi_2(0)\neq 1$. By the volume comparison theorem,
\begin{align*}
K(0)=\lim_{x\to 0}\frac{\text{det}(\bar{h})}{\text{det}(\bar{h}^{\mathbb{H}^4}(x))}=\lim_{r\to +\infty}\frac{\text{det}(g_r)}{\text{det}(g_r^{\mathbb{H}^4}(r))}<1,
\end{align*}
where
\begin{align*}
g^{\mathbb{H}^4}=dr^2+g_r^{\mathbb{H}^4}(r)=x^{-2}(dx^2+\frac{(1-x^2)^2}{4}\bar{h}^{\mathbb{H}^4})
\end{align*}
is the hyperbolic metric. Moreover it is proved in \cite{LiQingShi} that
\begin{align*}
(\frac{Y(\mathbb{S}^3,[\hat{g}])}{Y(\mathbb{S}^3,[g^{\mathbb{S}^3}])})^{\frac{3}{2}}\leq K(0)=\lim_{r\to +\infty}\frac{\text{det}(g_r)}{\text{det}(g_r^{\mathbb{H}^4}(r))},
\end{align*}
where $Y(\mathbb{S}^3,[\hat{g}])$ is the Yamabe constant of $(\mathbb{S}^3,[\hat{g}])$ and $g^{\mathbb{S}^3}$ is the round sphere metric.

From now on, without loss of generality, we assume $\lambda_1 > \max\{\lambda_2, \lambda_3\}$, so that $\phi_1(0) < 1$ and $\phi_1(0)\phi_2(0)<1$.

\begin{lem}\label{lem_monotonicity301}
For the initial data $\phi_1(0), \phi_2(0)> 0$, we have $y_1'(x)>0$ for $x\in(0,1)$. Moreover, if $\phi_1(0) < 1$, $\phi_1(0)\phi_2(0)<1$ and $1<\phi_1(0)+\phi_1(0)\phi_2(0)$,  
then $y_2'$, $y_3'$ and $y_2'+y_3'$ have no zeroes on $x\in(0,1)$. That is to say, $K$, $\phi_1$, $\phi_2$ and $\phi_1\phi_2$ are monotonic on $x\in(0,1)$.
\end{lem}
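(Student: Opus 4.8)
The plan is to handle $y_1$ first and then treat the pair $(y_2,y_3)$ together, since the monotonicity of $y_2,y_3,y_2+y_3$ is coupled through the nonlinear terms. For the sign of $y_1'$, write $u=y_1'$ and complete the square, $(y_2')^2+y_2'y_3'+(y_3')^2=\bigl(y_2'+\tfrac12 y_3'\bigr)^2+\tfrac34 (y_3')^2\ge 0$, so that $(\ref{equn_GBergerEinstein01})$ takes the form $u'-p(x)\,u=f(x)$ with $p=x^{-1}(1+3x^2)(1-x^2)^{-1}>0$ and $f=-\tfrac16 u^2-\tfrac13\bigl[(y_2')^2+y_2'y_3'+(y_3')^2\bigr]\le 0$ on $(0,1)$. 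With the integrating factor $\mu=\exp(-\int p)$ one has $(\mu u)'=\mu f\le 0$, so $\mu u$ is nonincreasing; since $p\sim 2(1-x)^{-1}$ as $x\to 1$ we have $\mu\to 0$ while $u(1)=0$, hence $\mu u\to 0$ from the right. A nonincreasing function with vanishing right endpoint limit is nonnegative, so $u\ge 0$; and if $u(x_0)=0$ at an interior point then $\mu u\equiv 0$ on $[x_0,1]$, forcing $y_1'\equiv y_2'\equiv y_3'\equiv 0$ there through $(\ref{equn_GBergerEinstein01})$ and then $\phi_1\equiv\phi_2\equiv 1$ by backward uniqueness, contradicting the generalized Berger hypothesis. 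Thus $y_1'>0$ on $(0,1)$.

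Next I would expose the sign structure of the remaining equations by factoring the bracketed nonlinear terms $B_2,B_3$ of $(\ref{equn_GBergerEinstein03}),(\ref{equn_GBergerEinstein04})$ and their sum:
\begin{align*}
B_2&=(\phi_1-1)\,\phi_1^{-4/3}\phi_2^{-2/3}\,(\phi_1\phi_2-\phi_1-1),\\
B_3&=(\phi_2-1)\,\phi_1^{-1/3}\phi_2^{-2/3}\,(1-\phi_1-\phi_1\phi_2),\\
B_2+B_3&=(\phi_1\phi_2-1)\,\phi_1^{-4/3}\phi_2^{-2/3}\,(\phi_1-1-\phi_1\phi_2).
\end{align*}
The prefactors (together with the positive $K^{-1/3}$ in the equations) are strictly positive, so the crucial remark is that at any interior zero $x_0$ of $y_2'$ (resp.\ $y_3'$, $(y_2+y_3)'$) the first–order terms of $(\ref{equn_GBergerEinstein03})$ (resp.\ $(\ref{equn_GBergerEinstein04})$, their sum) drop out and the equation collapses to $y_2''(x_0)=-32(1-x_0^2)^{-2}K^{-1/3}B_2(x_0)$ (and analogously), i.e.\ the sign of the second derivative at a critical point is opposite to that of the corresponding bracket.

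The heart of the argument is to prove $\phi_1<1$ and $\phi_1\phi_2<1$ by treating $y_2$ and $y_2+y_3$ \emph{simultaneously}. Put $M=\max_{[0,1]}\max(y_2,\,y_2+y_3)$; the boundary data $\phi_1(0),\phi_1(0)\phi_2(0)<1$ and $\phi_1(1)=\phi_2(1)=1$ give $M\le 0$ on the boundary. Suppose $M>0$; then $M$ is attained at an interior $x_0$, say by $y_2$ (the other case being symmetric). Then $y_2(x_0)>0$, so $\phi_1(x_0)>1$, while $M\ge (y_2+y_3)(x_0)$ gives $y_3(x_0)\le 0$, i.e.\ $\phi_2(x_0)\le 1$; hence $\phi_1\phi_2-\phi_1-1=\phi_1(\phi_2-1)-1\le -1<0$, so $B_2(x_0)>0$ and the collapse formula yields $y_2''(x_0)>0$, contradicting that $x_0$ is an interior maximum of $y_2$. (If $M$ is attained by $y_2+y_3$, one uses instead $\phi_2(x_0)\ge 1$ and the factorization of $B_2+B_3$.) Therefore $M\le 0$, and writing $(\ref{equn_GBergerEinstein03})$ as a linear homogeneous equation in $y_2$ with continuous coefficients upgrades this, at any interior contact point, to the strict inequalities $\phi_1<1,\ \phi_1\phi_2<1$ on $(0,1)$. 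With these, every interior critical point of $y_2$ (resp.\ $y_2+y_3$) is a strict local maximum at a negative value, so since the supremum $0$ is attained only at $x=1$ no interior critical point can occur, and $y_2,\,y_2+y_3$ are strictly increasing. Consequently $\phi_1+\phi_1\phi_2$ is strictly increasing, and the hypothesis $1<\phi_1(0)+\phi_1(0)\phi_2(0)$ keeps it $>1$, i.e.\ $1-\phi_1-\phi_1\phi_2<0$ throughout; this activates the sign rule for $y_3$, $\operatorname{sign}y_3''=\operatorname{sign}y_3$ at each interior critical point, forbidding an interior local maximum at a positive value and an interior local minimum at a negative value. Together with $y_3(1)=0$ and $y_3(0)=\log\phi_2(0)\ne 0$ this forces $y_3$ to be strictly monotone (decreasing if $\phi_2(0)>1$, increasing if $\phi_2(0)<1$). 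Hence $y_2',y_3',y_2'+y_3'$ have no zeros on $(0,1)$.

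I expect the third step to be the main obstacle: neither $y_2$ nor $y_2+y_3$ can be controlled in isolation, since the sign of its nonlinear forcing is favorable only when the other quantity lies below it, and it is the pairing through $\max(y_2,\,y_2+y_3)$ — made possible by the factorizations that reveal the signs — that closes the loop; the hypothesis $1<\phi_1(0)+\phi_1(0)\phi_2(0)$ enters exactly to keep $1-\phi_1-\phi_1\phi_2<0$ so that the $y_3$–equation inherits the same sign rule. A secondary technical nuisance is the degeneracy of $(\ref{equn_GBergerEinstein01})$–$(\ref{equn_GBergerEinstein04})$ at $x=0,1$, which precludes a direct Hopf or boundary–point maximum principle and is the reason the whole argument is organized around the second–derivative test at interior critical points rather than around endpoint lemmas.
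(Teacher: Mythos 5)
Your proof is correct, and its main part takes a genuinely different route from the paper's. For $y_1'$ you are in fact reproducing the paper's argument: your integrating factor $\mu=\exp(-\int p)$ is precisely $x^{-1}(1-x^2)^2$ up to a constant, so $(\mu u)'=\mu f$ is the paper's identity $(\ref{equn_singularint21})$, repackaged as monotonicity of $\mu u$ instead of integration from the largest zero. For $y_2$, $y_3$ and $y_2+y_3$ the two proofs diverge: the paper multiplies each equation by the singular weight $x^{-2}(1-x^2)^3$, integrates from the \emph{largest} zero among $\{y_2',y_3',(y_2+y_3)'\}$ up to $x=1$, and runs a case analysis on which derivative owns that zero; you instead argue pointwise, via the collapse formula $y_2''(x_0)=-32(1-x_0^2)^{-2}K^{-1/3}B_2(x_0)$ at critical points, a maximum principle for the \emph{coupled} quantity $\max(y_2,\,y_2+y_3)$ (the genuinely new idea here: neither function is controllable alone, since the favorable sign of each bracket requires information about the other), the strong maximum principle to make $\phi_1\le 1$ and $\phi_1\phi_2\le 1$ strict, and a second-derivative test to exclude interior critical points. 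Both proofs rest on the same factorizations of the nonlinear brackets, and both invoke $1<\phi_1(0)+\phi_1(0)\phi_2(0)$ only to pin down the sign of $1-\phi_1-\phi_1\phi_2$ in the $y_3$ equation. Your route yields $\phi_1<1$ and $\phi_1\phi_2<1$ on $(0,1)$ \emph{before} monotonicity is established and avoids the largest-zero bookkeeping; the paper's weighted integral identities have the side benefit of reappearing almost verbatim in the a priori estimates of Lemma \ref{lem_uniformests301}. Two small repairs to your write-up: in the interior-maximum case you actually get $B_2(x_0)<0$ (from $\phi_1(x_0)>1$ and $\phi_1\phi_2-\phi_1-1\le -1<0$), not $B_2(x_0)>0$; the conclusion $y_2''(x_0)>0$ is the correct one and is what your own collapse formula gives. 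And the endgame for $y_3$ should be run exactly as for $y_2$: first show $y_3$ cannot change sign (locate its global extrema via the sign rule, then apply the strong maximum principle at a hypothetical interior zero of $y_3$), and only then exclude interior critical points --- the two stated prohibitions by themselves do not handle a critical point at which $y_3=0$.
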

\begin{proof}
The proof is a modification of Lemma 5.1 in \cite{Li}. Notice that the zeroes of $y_i'$ ($1\leq i \leq 3$) are discrete on $x\in[0,1]$. Assume that there exists a zero of $y_1'$ on $x\in(0,1)$. Let $x_1$ be the largest zero of $y_1'$ on $x\in(0,1)$. Multiplying $x^{-1}(1-x^2)^2$ on both sides of $(\ref{equn_GBergerEinstein01})$ and integrating the equation on $x\in[x_1,1]$, we have
\begin{align}
&\label{equn_singularint21}(x^{-1}(1-x^2)^2y_1')'+x^{-1}(1-x^2)^2[\frac{1}{6}(y_1')^2+\frac{1}{3}((y_2')^2+y_2'y_3'+(y_3')^2)]=0,\\
&\int_{x_1}^1x^{-1}(1-x^2)^2[\frac{1}{6}(y_1')^2+\frac{1}{3}((y_2')^2+y_2'y_3'+(y_3')^2)]dx=0.\notag
\end{align}
Therefore, $y_1'=0$ on $x\in[x_1,1]$. Since $y_1$ is analytic, $y_1'=0$ for $x\in[0,1]$, contradicting with the fact $y_1(0)<y_1(1)$. Therefore, there is no zero of $y_1'$ on $x\in(0,1)$. Therefore, $y_1'>0$ for $x\in (0,1)$.

Denote $\phi_3=\phi_1\phi_2$ and $y_4=\log(\phi_3)$. Summarizing $(\ref{equn_GBergerEinstein03})$ and $(\ref{equn_GBergerEinstein04})$, we have
\begin{align}
y_4''-2x^{-1}(1+2x^2)(1-x^2)^{-1}y_4'+\frac{1}{2}y_1'y_4'+32(1-x^2)^{-2}K^{-\frac{1}{3}}[\phi_1^{\frac{2}{3}}\phi_2^{\frac{1}{3}}-\phi_1^{-\frac{1}{3}}\phi_2^{-\frac{2}{3}}-\phi_1^{\frac{2}{3}}\phi_2^{\frac{4}{3}}+\phi_1^{-\frac{4}{3}}\phi_2^{-\frac{2}{3}}]=0.
\end{align}
Let $x_i\in (0,1)$ be a zero of $y_i'$ for $i=2,3, 4$. We multiplying $x^{-2}(1-x^2)^3$ on both sides of $(\ref{equn_GBergerEinstein03})$, and integrate the equation on $x\in[x_2,1]$ to obtain
\begin{align}
&\label{equn_y2singularint}(x^{-2}(1-x^2)^3y_2')'+\frac{1}{2}x^{-2}(1-x^2)^3y_1'y_2'+ 32x^{-2}(1-x^2)K^{-\frac{1}{3}}[\phi_1^{\frac{2}{3}}\phi_2^{\frac{1}{3}}-\phi_1^{-\frac{1}{3}}\phi_2^{\frac{1}{3}}-\phi_1^{\frac{2}{3}}\phi_2^{-\frac{2}{3}}+\phi_1^{-\frac{4}{3}}\phi_2^{-\frac{2}{3}}]=0,\\
&\int_{x_2}^1\frac{1}{2}x^{-2}(1-x^2)^3y_1'y_2'dx= -\int_{x_2}^132x^{-2}(1-x^2)K^{-\frac{1}{3}}[\phi_1^{\frac{2}{3}}\phi_2^{\frac{1}{3}}-\phi_1^{-\frac{1}{3}}\phi_2^{\frac{1}{3}}-\phi_1^{\frac{2}{3}}\phi_2^{-\frac{2}{3}}+\phi_1^{-\frac{4}{3}}\phi_2^{-\frac{2}{3}}]dx.\notag
\end{align}
That is
\begin{align}\label{eqn_intright02}
&\int_{x_2}^1\frac{1}{2}x^{-2}(1-x^2)^3y_1'y_2'dx= -\int_{x_2}^132x^{-2}(1-x^2)K^{-\frac{1}{3}}\phi_1^{-\frac{4}{3}}\phi_2^{-\frac{2}{3}}(1-\phi_1)(1+\phi_1-\phi_3)dx.
\end{align}
Similarly,
\begin{align}
&\label{eqn_intright03}\int_{x_3}^1\frac{1}{2}x^{-2}(1-x^2)^3y_1'y_3'dx= -\int_{x_3}^132x^{-2}(1-x^2)K^{-\frac{1}{3}}\phi_1^{-\frac{1}{3}}\phi_2^{-\frac{2}{3}}(1-\phi_2)(-1+\phi_1+\phi_3)dx,\\
&\label{eqn_intright04}\int_{x_4}^1\frac{1}{2}x^{-2}(1-x^2)^3y_1'y_4'dx= -\int_{x_4}^132x^{-2}(1-x^2)K^{-\frac{1}{3}}\phi_1^{-\frac{4}{3}}\phi_2^{-\frac{2}{3}}(1-\phi_3)(1-\phi_1+\phi_3)dx.
\end{align}

We assume that $y_2'$ achieves the largest zero $x_2\in (0,1)$ in $\{y_2',y_3',y_4'\}$. Notice that $y_i'(1)=0$ and $y_i(1)=0$ for $i=2,3,4$. We have that $y_i'(1-\phi_{i-1})>0$ on $x\in(x_2,1)$ for $i=2,3,4$. By $(\ref{eqn_intright02})$, there exists a point $x\in(x_2,1)$ such that
\begin{align}\label{ineqnright13}
1+\phi_1(x)-\phi_3(x)<0.
\end{align}
We {\bf claim} that there is no zero of $y_3'$ and $y_4'$ on $x\in (0,1)$. If that is not the case, assume $y_3'$  achieves the largest zero $x_3 \in (0,x_2]$ in $\{y_3',y_4'\}$. Since $y_4'$ keeps the sign on $(x_3, 1)$, we have $\phi_3(x)>0$ and $y_4'<0$ on $(x_3, 1)$, contradicting with $(\ref{eqn_intright03})$. Otherwise, assume $y_4'$  achieves the largest zero $x_4 \in (0,x_2]$ in $\{y_3',y_4'\}$. Then $y_3'$ keeps the sign on $(x_2, 1)$ and by $(\ref{ineqnright13})$ we have $\phi_3=\phi_1\phi_2>\phi_1$ on $(x_2, 1)$ and therefore $\phi_2>1$ on $(x_4, 1)$, which implies $(1-\phi_1+\phi_3)>0$ on $(x_4,1)$, contradicting with $(\ref{eqn_intright04})$. That proves the {\bf claim}. By $(\ref{ineqnright13})$, $\phi_2>1$ and $\phi_3=\phi_1\phi_2>1$ on $x\in (0,1)$. Therefore, for $x\in(x_2,1)$,
\begin{align}
1+\phi_1(x)-\phi_3(x)> 1+ \phi_2^{-1} - \phi_3 > 1+ \phi_2^{-1}(0) - \phi_3(0)>0
\end{align}
where the last inequality is by the condition of the lemma, contradicting with $(\ref{eqn_intright02})$. Therefore, $y_2'$ could not achieve the largest zero on $x\in (0, 1)$ in $\{y_2',y_3',y_4'\}$.

Similar argument yields that neither $y_3'$ nor $y_4'$ could achieve the largest zero on $x\in (0, 1)$ in $\{y_2',y_3',y_4'\}$. Therefore, there is no zero of $y_i'$ on $x\in(0,1)$ for $i=2,3,4$. This completes the proof of the lemma.

\end{proof}
\begin{Remark}
Based on a integral comparison argument (see Lemma 5.3 and Theroem 5.4  in \cite{Li}), we can obtain the uniqueness of solutions of the boundary value problem of conformally compact Einstein metrics with the Berger sphere as its conformal infinity. But here, due to the interaction of the three quantities $y_1,\,y_2$ and $y_3$, such direct comparisons fail to conclude the uniqueness of solutions for the generalized Berger sphere case. To handle this difficulty, we have to do a priori estimates on the solutions and consider the total variation of the solutions, which refers to a global discussion.
\end{Remark}

Using the monotonicity of $y_i$ ($1\leq i \leq 4$), we now give a uniform estimate of $y_i$ on $x\in[0,1]$ under the condition in Lemma \ref{lem_monotonicity301}.

By $(\ref{equn_GBergerEinstein05})$ and the initial value condition, we have
\begin{align}
y_1'&\label{equn_y1lowerorder1}=6x^{-1}(1-x^2)^{-1}[1+x^2-\sqrt{(1+x^2)^2+\frac{1}{36}x^2(1-x^2)^2((y_2')^2+y_2'y_3'+(y_3')^2)-\frac{4}{3}x^2(3-\Upsilon(x))}]\\
&=6x^{-1}(1-x^2)^{-1}[1+x^2-\sqrt{(1-x^2)^2+\frac{1}{36}x^2(1-x^2)^2((y_2')^2+y_2'y_3'+(y_3')^2)+\frac{4}{3}x^2 \Upsilon(x)}\,\,]\notag,
\end{align}
where
\begin{align*}
\Upsilon(x)=K^{-\frac{1}{3}}[2(\phi_1^2\phi_2)^{\frac{1}{3}}+2(\phi_1^{-1}\phi_2)^{\frac{1}{3}}+2(\phi_1\phi_2^2)^{-\frac{1}{3}}-\phi_1^{-\frac{4}{3}}\phi_2^{-\frac{2}{3}}-\phi_1^{\frac{2}{3}}\phi_2^{-\frac{2}{3}}-\phi_1^{\frac{2}{3}}\phi_2^{\frac{4}{3}}].
\end{align*}
Since $y_1'>0$ for $x\in(0,1)$, by $(\ref{equn_y1lowerorder1})$ it is clear that
\begin{align}\label{inequn_boundaryYamabeconstant}
3-\Upsilon(x)> 0\,\, \text{for}\,\, x\in(0,1).
\end{align}
Recall that $\Upsilon\in C^{\infty}([0,1))$, hence we have $3 - \Upsilon(0)\geq 0$. Notice that by $(\ref{equn_GBergerEinstein02})-(\ref{equn_GBergerEinstein04})$,
\begin{align*}
&y_1''(0)= 4 (3- \Upsilon(0)),\\
&y_2''(0)=32K(0)^{-\frac{1}{3}}[\phi_1(0)^{\frac{2}{3}}\phi_2(0)^{\frac{1}{3}}-\phi_1(0)^{-\frac{1}{3}}\phi_2(0)^{\frac{1}{3}}-\phi_1(0)^{\frac{2}{3}}\phi_2(0)^{-\frac{2}{3}}+\phi_1(0)^{-\frac{4}{3}}\phi_2(0)^{-\frac{2}{3}}],\\
&y_3''(0)=32K(0)^{-\frac{1}{3}}[\phi_1(0)^{-\frac{1}{3}}\phi_2(0)^{\frac{1}{3}}-\phi_1(0)^{-\frac{1}{3}}\phi_2(0)^{-\frac{2}{3}}-\phi_1(0)^{\frac{2}{3}}\phi_2(0)^{\frac{4}{3}}+\phi_1(0)^{\frac{2}{3}}\phi_2(0)^{-\frac{2}{3}}].
\end{align*}
Therefore,
\begin{align*}
&\frac{d^2}{dx^2}(3-\Upsilon)\big|_{x=0}\\
&=\frac{1}{3}y_1''(0)\Upsilon(0)-\frac{2}{3}K(0)^{-\frac{1}{3}}\{[2(\phi_1^2\phi_2)^{\frac{1}{3}}-(\phi_1^{-1}\phi_2)^{\frac{1}{3}}-(\phi_1\phi_2^2)^{-\frac{1}{3}}+2\phi_1^{-\frac{4}{3}}\phi_2^{-\frac{2}{3}}-\phi_1^{\frac{2}{3}}\phi_2^{-\frac{2}{3}}-\phi_1^{\frac{2}{3}}\phi_2^{\frac{4}{3}}]y_2''(0)\\
&+[(\phi_1^2\phi_2)^{\frac{1}{3}}+(\phi_1^{-1}\phi_2)^{\frac{1}{3}}-2(\phi_1\phi_2^2)^{-\frac{1}{3}}+\phi_1^{-\frac{4}{3}}\phi_2^{-\frac{2}{3}}+\phi_1^{\frac{2}{3}}\phi_2^{-\frac{2}{3}}-2\phi_1^{\frac{2}{3}}\phi_2^{\frac{4}{3}}]y_3''(0)\}.
\end{align*}
We {\bf claim} that $(3-\Upsilon(0))>0$ and hence $y_1''(0)>0$. Otherwise, $(3-\Upsilon(0))=0$ and hence $y_1''(0)=0$. After substituting the expression of $y_i''(0)$, we have
\begin{align*}
&\frac{d^2}{dx^2}(3-\Upsilon)\big|_{x=0}\\
&=-128K^{-\frac{2}{3}}\phi_1^{-\frac{8}{3}}\phi_2^{-\frac{4}{3}}[(1-\phi_1)^2(1+\phi_1+\phi_1^2-\phi_1\phi_2-\phi_1^2\phi_2)+\phi_1^3\phi_2^2(1-\phi_1\phi_2)(1-\phi_2)]\big|_{x=0}\\
&=-128K^{-\frac{2}{3}}\phi_1^{-\frac{8}{3}}\phi_2^{-\frac{4}{3}}[(1-\phi_1\phi_2)^2((1-\phi_1)(1+\phi_1\phi_2)+\phi_1^2\phi_2^2)+\phi_1^3(\phi_2-1)(1-\phi_1)]\big|_{x=0}.
\end{align*}
Using the first identity when $\phi_2(0)<1$, while using the second identity when $\phi_2(0)>1$, combining with the fact $\phi_1(0)<1$ and $\phi_1(0)\phi_2(0)<1$, we have
\begin{align*}
&\frac{d^2}{dx^2}(3-\Upsilon)\big|_{x=0}<0.
\end{align*}
Since $\frac{d}{dx}(3-\Upsilon)\big|_{x=0}=0$, we have $\frac{d}{dx}(3-\Upsilon(x))<0$ for $x>0$ small, and therefore $(3-\Upsilon(x))<0$ for $x>0$ small, contradicting with $(\ref{inequn_boundaryYamabeconstant})$. The {\bf claim} is proved. This gives a lower bound of $K(0)$ under the condition of Lemma \ref{lem_monotonicity301}:
\begin{align}\label{ineqn_Kinitialbound301}
K(0)> \big(\frac{2(\phi_1^2\phi_2)^{\frac{1}{3}}+2(\phi_1^{-1}\phi_2)^{\frac{1}{3}}+2(\phi_1\phi_2^2)^{-\frac{1}{3}}-\phi_1^{-\frac{4}{3}}\phi_2^{-\frac{2}{3}}-\phi_1^{\frac{2}{3}}\phi_2^{-\frac{2}{3}}-\phi_1^{\frac{2}{3}}\phi_2^{\frac{4}{3}}}{3}\big)^3\big|_{x=0},
\end{align}
 and hence for $|1-\phi_i(0)|$ small, there exists a constant $C>0$ independent of the initial data and the solution, such that
 \begin{align}\label{ineqn_upperbddeterm301}
 |1-K(0)|\leq C(|1-\phi_1(0)|+|1-\phi_2(0)|).
 \end{align}
 But the condition in Lemma \ref{lem_monotonicity301}, we have
 \begin{align}\label{ineqn_lowerbk301}
 K(0)>\frac{1}{9}\phi_1(0)^{-1}\phi_2^{-2}(0)(1+\phi_1(0)\phi_2(0))^3,
 \end{align}
 when $\phi_2(0)\geq 1$; while for $\phi_2(0)<1$,
  \begin{align}\label{ineqn_lowerbk302}
 K(0)>\frac{1}{9}\phi_1(0)^{-1}\phi_2(0)(1+\phi_1(0))^3.
 \end{align}
 Now we give an estimate of the solution away from $x=1$.

\begin{lem}\label{lem_uniformests301}
Let $\phi_1(0) < 1$, $\phi_1(0)\phi_2(0)<1$ and $1<\phi_1(0)+\phi_1(0)\phi_2(0)$. If moreover we have $\phi_1(0),\,\phi_2(0)>\delta_0$ for some constant $\delta_0\in(0,1)$, then there exists a constant $C=C(\delta_0)>0$ independent of the solution and the initial data $\phi_i(0)$ such that
\begin{align}
&|y_1^{(k)}(x)|\leq C (|1-\phi_1(0)|+|1-\phi_2(0)|)x^{2-k},\\
&|y_i^{(k)}(x)| \leq C|1-\phi_{i-1}(0)|x^{2-k},
\end{align}
with $y_i^{(k)}$ the $k-$th order derivative of $x$, for $k=1,2$, $i=2,3$ and $x\in(0,\frac{3}{4}]$. The control still holds on the interval $(0, 1-\epsilon]$ for any $\epsilon>0$ small with some constant $C=C(\delta_0, \epsilon)>0$. 
\end{lem}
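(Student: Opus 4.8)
\emph{Strategy.} The plan is to convert the smallness of the initial data into a \emph{uniform} smallness of all the potential (source) terms, and then to integrate the equations against their natural integrating factors while anchoring the integration at an interior point rather than at a degenerate endpoint. First I would record the consequence of Lemma \ref{lem_monotonicity301}: since each of $\phi_1,\phi_2,K$ is monotone on $(0,1)$ with $\phi_1(1)=\phi_2(1)=K(1)=1$, every value $\phi_i(x)$ lies between $\phi_i(0)$ and $1$, so $|1-\phi_i(x)|\le|1-\phi_i(0)|$ and, by $(\ref{ineqn_upperbddeterm301})$, $|1-K(x)|\le C(|1-\phi_1(0)|+|1-\phi_2(0)|)$ for all $x$. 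Together with the hypothesis $\phi_i(0)>\delta_0$ and the lower bounds $(\ref{ineqn_Kinitialbound301})$--$(\ref{ineqn_lowerbk302})$ on $K(0)$, this bounds every power $K^{-1/3},\phi_i^{\pm}$ above and below by constants depending only on $\delta_0$. Consequently each bracket in $(\ref{equn_GBergerEinstein03})$--$(\ref{equn_GBergerEinstein04})$, which factors through $(1-\phi_1)$ or $(1-\phi_2)$ as exhibited in $(\ref{eqn_intright02})$--$(\ref{eqn_intright04})$, together with the quantity $3-\Upsilon$, becomes pointwise $\le C(\delta_0)|1-\phi_{i-1}(0)|$ and $\le C(\delta_0)(|1-\phi_1(0)|+|1-\phi_2(0)|)$ respectively.

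\emph{Estimate for $y_1$.} I would bound $y_1'$ directly from the algebraic relation $(\ref{equn_y1lowerorder1})$. Rationalizing the square root gives $y_1'=-6x(1-x^2)^{-1}\big[\tfrac{(1-x^2)^2}{36}Q-\tfrac43(3-\Upsilon)\big]\big/\big[(1+x^2)+\sqrt{\,\cdots\,}\big]$ with $Q=(y_2')^2+y_2'y_3'+(y_3')^2$; the denominator is bounded below on $[0,\tfrac34]$, so with the uniform bound above on $3-\Upsilon$ and the (quadratically small) bound on $Q$ one gets $|y_1'(x)|\le C(|1-\phi_1(0)|+|1-\phi_2(0)|)x$. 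The bound on $y_1''$ then follows by reading $y_1''$ off $(\ref{equn_GBergerEinstein01})$: the only dangerous term, $x^{-1}(1+3x^2)(1-x^2)^{-1}y_1'$, becomes $O(|1-\phi_1(0)|+|1-\phi_2(0)|)$ once $|y_1'|\le C(|1-\phi_1(0)|+|1-\phi_2(0)|)x$, while the remaining terms are quadratic.

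\emph{Estimate for $y_2,y_3$ (the crux).} Here I treat $(\ref{equn_GBergerEinstein03})$--$(\ref{equn_GBergerEinstein04})$ as first-order linear equations for $y_i'$ with integrating factor $\mu=x^{-2}(1-x^2)^3$, as in $(\ref{equn_y2singularint})$, and source $S_i=-\tfrac12 y_1'y_i'-P_i$, where $P_i$ is the potential term, so that $|S_i|\le C|1-\phi_{i-1}(0)|$. The difficulty is that $\mu y_i'\sim y_i''(0)\,x^{-1}$ blows up at $x=0$, so one cannot integrate from the origin with a vanishing boundary term, and the singular problem is not determined by the Cauchy data $y_i'(0)=0$ (the homogeneous mode $\mu^{-1}\sim x^2$ is resonant). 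To circumvent this I would import an interior bound from monotonicity: since $y_i$ is monotone, $\int_0^1|y_i'|\,dx=|\log\phi_{i-1}(0)|\le C|1-\phi_{i-1}(0)|$, so by the mean value theorem there is an anchor $x_\ast\in[\tfrac12,\tfrac34]$ with $|y_i'(x_\ast)|\le C|1-\phi_{i-1}(0)|$. Integrating $(\mu y_i')'=\mu S_i$ from $x$ to $x_\ast$ and using $|\mu S_i|\le C|1-\phi_{i-1}(0)|\,s^{-2}$ to bound $\int_x^{x_\ast}\mu S_i\,ds\le C|1-\phi_{i-1}(0)|\,x^{-1}$ then yields $|y_i'(x)|=x^2(1-x^2)^{-3}|\mu y_i'|\le C|1-\phi_{i-1}(0)|\,x$, and $|y_i''|\le C|1-\phi_{i-1}(0)|$ by reading $y_i''$ off the equation exactly as for $y_1$.

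\emph{Closing the loop and the endpoint.} The $y_1$ bound uses $Q$ (hence $y_2',y_3'$), while the $y_i$ sources contain the cross term $\tfrac12 y_1'y_i'$, so I would close the three estimates by a short bootstrap/continuity argument, the cross terms being one order higher in $|1-\phi_i(0)|$ and therefore absorbable. The anchor $x_\ast$ is kept in $[\tfrac12,\tfrac34]$ precisely to avoid the genuinely singular region near $x=1$, where $(1-x^2)^{-2}$ degenerates and a different argument (the Weyl tensor bound) is required. To pass from $[0,\tfrac34]$ to $[0,1-\epsilon]$ one only notes that $(1-x^2)^{-1},(1-x^2)^{-2},(1-x^2)^{-3}$ are bounded there by constants depending on $\epsilon$, so the identical argument runs with $C=C(\delta_0,\epsilon)$. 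The main obstacle, as indicated, is the resonant degeneracy at $x=0$ for $y_2,y_3$, handled by anchoring at an interior point supplied by monotonicity; a secondary point is keeping every constant dependent only on $\delta_0$ (and $\epsilon$), which is exactly why the uniform two-sided bounds on $\phi_i,K$ and the explicit lower bounds $(\ref{ineqn_Kinitialbound301})$--$(\ref{ineqn_lowerbk302})$ are established at the outset.
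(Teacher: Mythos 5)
Your proposal is correct in substance and reaches the same estimates, but it is not the paper's argument, and the comparison is instructive. The paper's key device is to build the first integral of each equation \emph{exactly}: it multiplies $(\ref{equn_GBergerEinstein02})$ by $x^{-5}(1-x^2)^6K^{\frac12}$ and $(\ref{equn_GBergerEinstein03})$--$(\ref{equn_GBergerEinstein04})$ by $x^{-2}(1-x^2)^3K^{\frac12}$; the extra factor $K^{\frac12}=e^{y_1/2}$ is chosen precisely so that its logarithmic derivative $\tfrac12 y_1'$ absorbs the quadratic term $\tfrac12(y_1')^2$ (resp.\ the cross term $\tfrac12 y_1'y_i'$) into the total derivative $(\mu y_i')'$, leaving only the potential term as a source. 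This eliminates the bootstrap you invoke: after integration from $x$ to $\tfrac34$ the only input is the smallness of $3-\Upsilon$ and of the factored brackets, exactly the reduction you perform in your first paragraph. You instead keep the bare integrating factor and push the cross term into the source, which forces the (acknowledged but unproved) claim $|S_i|\le C|1-\phi_{i-1}(0)|$; as stated that presupposes a pointwise bound on $y_i'$, i.e.\ the conclusion. Your continuity argument does close --- using the crude bound $(\ref{ineqn_upperlr3011})$ one has $\mu\,|y_1'y_i'|\le C|y_i'|$, whose integral is controlled by the total variation $|\log\phi_{i-1}(0)|$ --- but you should say so explicitly rather than gesture at it. Two further differences: your anchor at $x_\ast\in[\tfrac12,\tfrac34]$ via the mean value theorem applied to $\int_0^1|y_i'|=|\log\phi_{i-1}(0)|$ is more elementary than the paper's interior Schauder estimates on $[\tfrac14,\tfrac34]$ and delivers the factor $|1-\phi_{i-1}(0)|$ just as directly; and for $y_1'$ you rationalize the algebraic relation $(\ref{equn_y1lowerorder1})$ rather than integrate $(\ref{equn_GBergerEinstein02})$, which works but makes the $y_1$ bound depend on the $y_2',y_3'$ bounds (so the three estimates must be derived in the order crude-$y_1'$, then $y_2',y_3'$, then refined $y_1'$), whereas the paper's version needs only $|3-\Upsilon|$. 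With the bootstrap made precise, your route is a legitimate and somewhat more elementary proof of the same lemma.
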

\begin{proof} By Lemma \ref{lem_monotonicity301} and the equation $(\ref{equn_y1lowerorder1})$, we have that for $x\in(0,1)$,
\begin{align}\label{ineqn_upperlr3011}
y_1'<6x^{-1}(1-x^2)^{-1}[1+x^2-\sqrt{(1-x^2)^2}\,\,]=12x(1-x^2)^{-1}.
\end{align}

Notice that
\begin{align*}
\min\{\phi_i(0), 1\} < \phi_i(x) < \max\{\phi_i(0), 1\},
\end{align*}
for $x\in (0,1)$ and $ 1\leq i \leq 3$, and $K(0)< K(x) <1$ for $x\in (0,1)$. By the interior estimates of the second order elliptic equations $(\ref{equn_GBergerEinstein01})-(\ref{equn_GBergerEinstein04})$ and the inequality $(\ref{ineqn_upperbddeterm301})$, there exists some constant $C=C(\delta_0)>0$ independent of the initial data and the solution so that
\begin{align*}
&|y_1^{(k)}(x)| \leq C(|1-\phi_1(0)|+|1-\phi_2(0)|),\,\,\,\text{and}\\
&|y_i^{(k)}(x)| \leq C(|1-\phi_{i-1}(0)|),
\end{align*}
for $2\leq i \leq 4$, $0 \leq k\leq 4$ and $x \in [\frac{1}{4}, \frac{3}{4}]$. To get global estimates, we multiply $x^{-5}(1-x^2)^6K^{\frac{1}{2}}$ to $(\ref{equn_GBergerEinstein02})$ and do integration on the interval $[x, \frac{3}{4}]$ to obtain
\begin{align*}
&(x^{-5}(1-x^2)^6K^{\frac{1}{2}}y_1')'+16x^{-5}(1-x^2)^4K^{\frac{1}{2}}[3-\Upsilon]=0,\\
&x^{-5}(1-x^2)^6K(x)^{\frac{1}{2}}y_1'(x)=(\frac{4}{3})^5(\frac{7}{16})^6K^{\frac{1}{2}}(\frac{3}{4})y_1'(\frac{3}{4})+\int_x^{\frac{3}{4}}16s^{-5}(1-s^2)^4K^{\frac{1}{2}}[3-\Upsilon(s)]ds,
\end{align*}
 for $x \in (0, \frac{3}{4}]$, and hence
\begin{align}
\label{ineqn_yd1left301}0 < y_1'(x) \leq C(|1-\phi_1(0)|+|1-\phi_2(0)|)x,
\end{align}
for $x\in(0,\frac{3}{4}]$, with some constant $C=C(\delta_0)>0$ independent of the initial data and the solution.

Similarly, by multiplying $x^{-2}(1-x^2)^3K^{\frac{1}{2}}$ to $(\ref{equn_GBergerEinstein03})$ and $(\ref{equn_GBergerEinstein04})$ and doing integration on $[x,\frac{3}{4}]$ correspondingly, we have
\begin{align}
&\label{ineqn_yd1left302}|y_2'(x)| \leq C|1-\phi_1(0)|x,\\
&\label{ineqn_yd1left303}|y_3'(x)| \leq C|1-\phi_2(0)|x,
\end{align}
for $x\in(0,\frac{3}{4})$, with some constant $C=C(\delta_0)>0$ independent of the solution and the initial data. Substituting these inequalities to $(\ref{equn_GBergerEinstein02})-(\ref{equn_GBergerEinstein04})$, we then have
\begin{align}
&|y_1''(x)|\leq C (|1-\phi_1(0)|+|1-\phi_2(0)|),\\
&|y_i''(x)| \leq C|1-\phi_{i-1}(0)|,
\end{align}
with $C=C(\delta_0)>0$ some constant independent of the initial data and the solutions, for $x\in (0,\frac{3}{4})$ and $i=2,3$.
\end{proof}

For $x\in(\frac{1}{2}, 1)$, multiplying $(x-x^3)$ on both sides of $(\ref{equn_GBergerEinstein03})$, we have
\begin{align}\label{eqn_regulary2302}
((x-x^3)y_2')'-(3+x^2)y_2'+\frac{1}{2}(x-x^3)y_1'y_2'+32x(1-x^2)^{-1}K^{-\frac{1}{3}}\phi_1^{-\frac{4}{3}}\phi_2^{-\frac{2}{3}}(1-\phi_1)(1+\phi_1-\phi_3)=0.
\end{align}
Since $y_1'$ and $y_2'$ keep the sign, we integrate the equation on $(\frac{1}{2}, x)$ and combine it with $(\ref{ineqn_upperlr3011})$ to have
\begin{align*}
&(x-x^3)|y_2'(x)|\leq C|y_2'(\frac{1}{2})|+C|y_2(x)-y_2(\frac{1}{2})| -C |1-\phi_1(0)|\log(1-x^2),\\
&|y_2'(x)|\leq C |1-\phi_1(0)|(1-x^2)^{-1}(1-\log(1-x^2)),
\end{align*}
with some constant $C=C(\delta_0)>0$ independent of the initial data and the solution, for $\frac{1}{2} \leq x < 1$. Similarly, we have
\begin{align*}
|y_3'(x)|\leq C |1-\phi_2(0)|(1-x^2)^{-1}(1-\log(1-x^2)),
\end{align*}
with some constant $C=C(\delta_0)>0$ independent of the initial data and the solution, for $\frac{1}{2} \leq x < 1$.

Now we use the bound of the Weyl tensor to give better estimates on $y_i^{(k)}$ with $ k=1, 2$ and $1\leq i \leq 3$ near $x=1$. Under the polar coordinates $(y^0,y^1,y^2,y^3)=(r,\theta^1,\theta^2,\theta^3 )$,
\begin{align*}
g=d(y^0)^2+\displaystyle\sum_{1\leq i, j\leq 3}g_{ij}dy^idy^j,
\end{align*}
and along the geodesic $\theta=\theta_0$,
\begin{align*}
g=d(y^0)^2+\displaystyle\sum_{1 \leq i \leq 3}g_{ii}d(y^i)^2.
\end{align*}
It is easy to obtain the calculations:
\begin{align*}
&\frac{d}{dr}g_{ii}=-\frac{x}{4}\frac{d}{dx}((x^{-1}-x)^2I_i)=\frac{1}{4x^2}(-x(1-x^2)^2\frac{d}{dx}I_i+2(1-x^4)I_i),\\
&g_{ii}^{-1}\frac{d}{dr}g_{ii}=(-xI_i^{-1}\frac{d}{dx}I_i+2(1+x^2)(1-x^2)^{-1}),
\end{align*}
for $i=1,2,3$. 
Recall that by the non-positivity of the sectional curvature of $g$ and the Einstein equation $(\ref{eqn_Einstein})$, we have that $|W|_g\leq T= 2 \sqrt{6}$. Using the boundedness of the Weyl tensor, we give an estimate of the solution on $x \in [\frac{3}{4}, 1]$.
\begin{lem}\label{lem_yiboundrightGB}
Assume that $|W|_g\leq \varepsilon$ with some constant $0<\varepsilon\leq T$. Under the condition of Lemma \ref{lem_monotonicity301}, and moreover we assume there exists a constant $\delta_0\in(0,1)$ such that $\phi_1(0),\phi_2(0)\geq \delta_0$. Then we have
\begin{align}
&\label{ineqn_yd1right301}|y_1^{(k)}|\leq C\,\varepsilon^2(1-x^2)^{4-k},\\
&\label{ineqn_yd1right302}|y_i^{(k)}|\leq C\,\varepsilon(1-x^2)^{2-k},
\end{align}
for $k=1,2$ and $x \in [\frac{3}{4}, 1]$, with $C=C(\delta_0)$.
\end{lem}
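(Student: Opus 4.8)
The plan is to read the estimate off the radial curvature equations, using the Weyl bound to fix the size of the anisotropy. Recall that $x=1$ corresponds to the center of gravity $p_0$, i.e. to $r=0$, where the metric $g$ is smooth and the geodesic spheres collapse; the values $K(1)=\phi_1(1)=\phi_2(1)=1$, i.e. $y_i(1)=0$, together with $y_i'(1)=0$, say that near $x=1$ the metric is a small perturbation of the hyperbolic model. Writing $g_{ii}=\sinh^2(r)\,I_i$ along the geodesic $\theta=\theta_0$ and setting $\beta_i=\tfrac12\partial_r\log I_i$ (so that $\beta_i(0)=0$, since every $y_j'(1)=0$ forces $\partial_x\log I_i|_{x=1}=0$ and $\partial_r=-x\partial_x$), the Jacobi equation $K_{0i}=-\partial_r^2\!\sqrt{g_{ii}}/\sqrt{g_{ii}}$ gives the Riccati identity
\begin{equation*}
K_{0i}+1=-\big(\partial_r\beta_i+2\coth(r)\,\beta_i+\beta_i^2\big),\qquad i=1,2,3 .
\end{equation*}
Here $\beta_2-\beta_1=\tfrac12\partial_r y_2$ and $\beta_3-\beta_2=\tfrac12\partial_r y_3$, while $\beta_1+\beta_2+\beta_3=\tfrac12\partial_r y_1$.

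First I would convert the Weyl bound into pointwise curvature control. Since $g$ is Einstein with $Ric_g=-3g$ in dimension four, the Ricci and scalar parts of the curvature operator are fixed and $Rm-Rm_{\mathbb H^4}=W$; hence every sectional curvature obeys $|K_{ab}+1|\le C|W|_g\le C\varepsilon$ pointwise. One should check that along $\theta=\theta_0$ the $\text{SU}(2)$-diagonal form of $\bar h$ leaves no uncontrolled off-diagonal Weyl component, so that in particular $|K_{0i}-K_{0j}|\le C\varepsilon$.

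Next I would integrate. For $y_2,y_3$ the difference of two Riccati identities gives, with $w=\beta_i-\beta_j$,
\begin{equation*}
\big|\partial_r w+2\coth(r)\,w+(\beta_i^2-\beta_j^2)\big|\le C\varepsilon ,
\end{equation*}
a regular singular equation at $r=0$ with indicial roots $0$ and $2$ (equivalently the $x=1$ regular singular point of $(\ref{equn_GBergerEinstein03})$--$(\ref{equn_GBergerEinstein04})$ after multiplying by $x-x^3$). Using the integrating factor $r^2$ and $w(0)=0$ one gets $|w|\le C\varepsilon\,r$, so $|\partial_r y_i|\le C\varepsilon\,r$ and then $|\partial_r^2 y_i|\le C\varepsilon$; since $r\sim\tfrac12(1-x^2)$ and $\partial_r=-x\,\partial_x$ near $x=1$, this is exactly $|y_i^{(k)}|\le C\varepsilon(1-x^2)^{2-k}$ for $k=1,2$ and $i=2,3$. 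For $y_1$ the key point is the exact trace identity $\sum_i(K_{0i}+1)=Ric_{00}+3=0$, which eliminates the order-$\varepsilon$ source: summing the three identities, $W_1:=\tfrac12\partial_r y_1=\sum_i\beta_i$ satisfies $\partial_r W_1+2\coth(r)W_1+W_1^2=-\sum_i(\beta_i-\tfrac13 W_1)^2$, whose right-hand side is a pure variance, hence $O(\varepsilon^2 r^2)$ by the bound just obtained. Feeding this into the $r^2$ integrating factor and bootstrapping (the $W_1^2$ term being negligible) yields $|\partial_r y_1|\le C\varepsilon^2 r^3$ and $|\partial_r^2 y_1|\le C\varepsilon^2 r^2$, i.e. $|y_1^{(k)}|\le C\varepsilon^2(1-x^2)^{4-k}$. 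Alternatively this quadratic gain can be read directly from the square-root formula $(\ref{equn_y1lowerorder1})$, in which $y_1'$ depends on $y_2',y_3'$ and on $3-\Upsilon$ only quadratically near $x=1$. Throughout, the monotonicity of Lemma \ref{lem_monotonicity301} and $\phi_i(0)\ge\delta_0$ keep the coefficients $K^{-1/3}\phi_1^{a}\phi_2^{b}$ and the $\coth$-type factors bounded above and below, which is the source of the dependence $C=C(\delta_0)$.

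The main obstacle will be the two matching issues at $x=1$: first, pinning down the precise pointwise dictionary between $|W|_g$ and the radial and mixed sectional curvature deviations for this cohomogeneity-one metric, making sure no Weyl component along the geodesic escapes the $C\varepsilon$ bound; and second, organizing the integrating factors so that a mere pointwise $\varepsilon$-bound on curvature upgrades to the sharp decay rates $(1-x^2)^{2-k}$ and $(1-x^2)^{4-k}$, and so that this is consistent with, and improves upon, the weak a priori bound $|y_i'|\le C|1-\phi_{i-1}(0)|(1-x^2)^{-1}(1-\log(1-x^2))$ derived just before the lemma (selecting the indicial root $2$ rather than $0$). The $\varepsilon^2$ improvement for $y_1$ is the part most sensitive to the Einstein constraint and must be handled with care.
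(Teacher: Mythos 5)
Your proposal is essentially correct and lands on the same estimates, but it reaches the $k=1$, $i=2,3$ bounds by a genuinely different route. The paper does not use the radial (``electric'') components $W_{0i0j}$ and the Jacobi/Riccati equation at all for this step: it uses the mixed components $W_{piq0}$ with $\{i,p,q\}=\{1,2,3\}$, which for this cohomogeneity-one diagonal metric reduce (via the Codazzi-type identity) to \emph{purely first-order algebraic} expressions $\frac{2x^2}{1-x^2}I_q^{-1/2}\,\frac{d}{dx}\bigl[I_i^{1/2}I_p^{-1/2}+I_i^{-1/2}I_p^{1/2}-I_i^{-1/2}I_p^{-1/2}I_q\bigr]$. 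Taking $(i,p,q)=(1,2,3)$ and $(2,3,1)$ yields a $2\times 2$ linear system in $(y_2',y_3')$ whose coefficient matrix is controlled by monotonicity and $\phi_i(0)\ge\delta_0$, so $|y_2'|,|y_3'|\le C\varepsilon(1-x^2)$ drops out \emph{pointwise, with no integration from the center}. Your Riccati route instead needs two extra inputs that you only gesture at: (a) a bound on each $\beta_i$ separately (not just the differences) near $r=0$, to control the quadratic term $\beta_i^2-\beta_j^2=(\beta_i+\beta_j)w$ before the integrating factor argument closes; this follows from the standard shape-operator comparison $\sqrt{1-\varepsilon}\coth(\sqrt{1-\varepsilon}\,r)\le\coth(r)+\beta_i\le\sqrt{1+\varepsilon}\coth(\sqrt{1+\varepsilon}\,r)$ issued from the smooth point $p_0$, but it must be said, since the a priori bound available just before the lemma ($|y_i'|\lesssim(1-x^2)^{-1}(1-\log(1-x^2))$) blows up at $x=1$ and cannot seed the iteration; and (b) uniformity in the solution of the resulting constants, which the comparison argument does supply. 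For $y_1$ your trace identity $\sum_i(K_{0i}+1)=0$ is exactly the paper's equation $(\ref{equn_GBergerEinstein01})$ (the radial Ricci equation), whose quadratic term $\frac13[(y_2')^2+y_2'y_3'+(y_3')^2]$ is your variance $2\sum_i(\beta_i-\frac13W_1)^2$; the paper integrates it against $x^{-1}(1-x^2)^2K^{1/6}$ on $(x,1)$, which is the $\sinh^2 r$ integrating factor in disguise, so that part coincides with yours up to the harmless slip that the coefficient of $W_1^2$ on the left of your identity should be $\frac13$, not $1$. Net assessment: your argument is viable and arguably more geometric, but the paper's choice of Weyl components is what makes the first-derivative step an algebra exercise rather than an ODE estimate; fill in item (a) and the proof is complete.
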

\begin{proof}
By the condition in the lemma, the following components of the Weyl tensor has the bound
\begin{align}\label{ineqn_Weylboundmixed}
|(g_{ii})^{-\frac{1}{2}}(g_{qq})^{-\frac{1}{2}}(g_{pp})^{-\frac{1}{2}}W_{piq0}(g)|\leq \varepsilon,
\end{align}
which is expressed as
\begin{align*}
&|(g_{ii})^{-\frac{1}{2}}(g_{qq})^{-\frac{1}{2}}(g_{pp})^{-\frac{1}{2}}W_{piq0}(g)|\\
&=|(g_{ii})^{-\frac{1}{2}}(g_{qq})^{-\frac{1}{2}}(g_{pp})^{\frac{1}{2}}[\frac{1}{2}\frac{d g_{ii}}{dr}(-(g_{ii})^{-1}+(g_{pp})^{-1}+(g_{pp})^{-1}g_{qq}(g_{ii})^{-1})\\ 
&+\frac{1}{2}\frac{dg_{pp}}{dr}((g_{pp})^{-1}-(g_{pp})^{-2}g_{ii}+(g_{pp})^{-2}g_{qq})-(g_{pp})^{-1}\frac{d g_{qq}}{dr}]|\\
&=\frac{x^2}{(1-x^2)}I_p^{\frac{1}{2}}I_q^{-\frac{1}{2}}I_i^{-\frac{1}{2}}|[I_i^{-1}\frac{d I_i}{dx}(-1+I_iI_p^{-1}+I_p^{-1}I_q)+I_p^{-1}\frac{d I_p}{dx}(1-I_iI_p^{-1}+I_p^{-1}I_q)-2(I_q^{-1}\frac{dI_q}{dx})I_p^{-1}I_q]|\\
&=2\frac{x^2}{(1-x^2)}I_q^{-\frac{1}{2}}|\frac{d}{dx}[I_i^{\frac{1}{2}}I_p^{-\frac{1}{2}}+I_i^{-\frac{1}{2}}I_p^{\frac{1}{2}}-I_i^{-\frac{1}{2}}I_p^{-\frac{1}{2}}I_q]|,
\end{align*}
for any $\{i, p, q\}=\{1,2,3\}$. Here we have used the symmetry of the metric $g$ for the derivatives $\frac{\partial}{\partial \theta^p}g_{ij}$, which can be found in \cite{Li}.

We choose $(i,p,q)=(1,2,3) $ and $(2,3,1)$. Now by $(\ref{ineqn_Weylboundmixed})$, we have
\begin{align*}
&(\phi_1-1-\phi_1\phi_2)y_2'-2\phi_1\phi_2y_3'=O(1)\,\varepsilon(1-x^2),\\
&2y_2'+(\phi_1\phi_2-\phi_1+1)y_3'=O(1)\,\varepsilon(1-x^2),
\end{align*}
for $x\in [\frac{3}{4}, 1]$, with $|O(1)|\leq \frac{32}{9}\phi_1^{\frac{1}{2}}(0)$. Therefore, using the initial values and the monotonicity of $\phi_i$, we have
\begin{align*}
&y_2'(x)=O(1)\,\varepsilon(1-x^2),\\
&y_3'(x)=O(1)\,\varepsilon(1-x^2),
\end{align*}
for $\frac{3}{4}\leq x \leq 1$ with $O(1)$ uniformly bounded, depending only on the lower bound $\delta_0$ of the initial data. 
We then integrate these two inequalities on the interval $(x,1)$, and use the monotonicity of $y_i$ and the fact $y_i(1)=0$ to have
\begin{align*}
&y_2(x)=O(1)\,\varepsilon(1-x^2)^2,\\
&y_3(x)=O(1)\,\varepsilon(1-x^2)^2,
\end{align*}
for $\frac{3}{4}\leq x \leq 1$ with $O(1)$ uniformly bounded, depending on the lower bound $\delta_0$. For $y_1'$, we multiply $x^{-1}(1-x^2)^2K^{\frac{1}{6}}$ on both sides of $(\ref{equn_GBergerEinstein01})$ and do integration on $(x, 1)$ for $\frac{3}{4}\leq x \leq 1$,
\begin{align*}
&(x^{-1}(1-x^2)^2K^{\frac{1}{6}}y_1')'+\frac{1}{3}x^{-1}(1-x^2)^2K^{\frac{1}{6}}[((y_2')^2+y_2'y_3'+(y_3')^2)]=0,\,\,\text{and}\,\,\\
&y_1'(x)=\frac{1}{3}K(x)^{-\frac{1}{6}}x(1-x^2)^{-2}\int_x^1s^{-1}(1-s^2)^2K^{\frac{1}{6}}(s)[((y_2')^2+y_2'y_3'+(y_3')^2)]ds\\
&\leq C \,\varepsilon^2 (1-x^2)^3,
\end{align*}
with $C=C(\delta_0)>0$, where for the last inequality we have used monotonicity of $y_i$ and the lower bound of $K$ given by $(\ref{ineqn_lowerbk301})$ and $(\ref{ineqn_lowerbk302})$. Integrate the inequality on $(x,1)$, we have
\begin{align*}
|y_1(x)|\leq C \,\varepsilon^2 (1-x^2)^4,
\end{align*}
with some constant $C=C(\delta_0)>0$, for $\frac{3}{4} \leq x \leq 1$.

\vskip0.2cm

 Substituting these estimates back to $(\ref{equn_GBergerEinstein01})-(\ref{equn_GBergerEinstein04})$, we have
\begin{align*}
&|y_1''(x)|\leq C \,\varepsilon^2(1-x^2)^2,\\
&|y_i''(x)| \leq C \,\varepsilon,
\end{align*}
for $i=1,2$ and $\frac{3}{4}\leq x\leq 1$, with some constant $C=C(\delta_0)>0$.

\end{proof}

Now we turn to the uniqueness discussion.

\begin{proof}[Proof of Theorem \ref{thm_Einsteineqnsbvpuniqueness1}]
By \cite{Li}, the uniqueness of the metric is equivalent to the uniqueness of the solution to the boundary value problem $(\ref{equn_GBergerEinstein01})-(\ref{equn_GBergerBV01})$.

Assume that we have two solutions $(y_{11}, y_{12}, y_{13})$ and $(y_{21}, y_{22}, y_{23})$ to the boundary value problem $(\ref{equn_GBergerEinstein01})-(\ref{equn_GBergerBV01})$, with $y_{ij}=\log(\phi_{i(j-1)})$ for $i=1,2$ and $j=2,3$ and $y_{i1}=\log(K_i)$ for $i=1,2$. Denote $z_i=y_{1i}-y_{2i}$ correspondingly for $1\leq i \leq 3$.

If these two solutions have the same conformal infinity $[\hat{g}]$ and the same non-local term $g^{(3)}$ in the expansion, then they coincide by \cite{Biquard2}. For $1 \leq i \leq 3$, by the Einstein equations, the zeroes of $z_i$ are discrete unless $z_i$ is identically zero.

 Let $\varepsilon=T=2\sqrt{6}$ and $\delta_0\in(0,1)$ be a given constant. By the non-positivity of the sectional curvature of $g$ and the Einstein equation $(\ref{eqn_Einstein})$, it holds that $|W|_g\leq T$ pointwisely on $M$. Assume that $\phi_1(0),\,\phi_2(0)\geq \delta_0$ and by assumption, $\phi_1(0),\,\phi_2(0)$ satisfy the condition in Lemma \ref{lem_monotonicity301}. We first consider the cases $i = 2, 3$. For $i=2,3$, on the domain
\begin{align*}
D_i^-=\{x\in(0,1] \big| z_i(x)\leq 0\},
 \end{align*}
 let $b_{i1}^-<...<b_{im_i}^-$ be the set of local minimum points of $z_i$ on $D_i^-$, and we pick up all the (maximal) non-increasing intervals of $z_i$ on $\overline{D}_i^-$ (the closure of $D_i^-$): $[a_{i1}^-,b_{i1}^-]\bigcup [a_{i2}^-,b_{i2}^-] \bigcup...\bigcup [a_{im_i}^-,b_{im_i}^-]$ such that $a_{i1}^-<b_{i1}^-<a_{i2}^-<...<a_{im_i}^-<b_{im_i}^-$ with $m_i$ some integer; while on the domain
 \begin{align*}
 D_i^+=\{x\in(0,1] \big| z_i(x)\geq 0\},
  \end{align*}
  let $b_{i1}^+<...<b_{in_i}^+$ be the set of local maximum points of $z_i$ on $D_i^+$ with $n_i$ some integer, and we pick up all the (maximal) non-decreasing intervals of $z_i$ on $\overline{D}_i^+$ (the closure of $D_i^+$):
  \begin{align*}
  [a_{i1}^+,b_{i1}^+]\bigcup [a_{i2}^+,b_{i2}^+] \bigcup...\bigcup [a_{in_i}^+,b_{in_i}^+]
  \end{align*}
  such that $a_{i1}^+<b_{i1}^+<a_{i2}^+<...<a_{in_i}^+<b_{in_i}^+$.  Since $z_i\in C^{\infty}([0,1])$ and $z_i'$ has finitely many zeroes, it is clear that $z_i$ is of bounded variation on $x\in[0,1]$. For an interval $[a,b] \subseteq [0,1]$, we denote $V_a^b(z_i)$ the total variation of $z_i$ on $x \in [a,b]$, and we denote $V(z_i)$ the total variation of $z_i$ on $x\in[0,1]$.

 Recall that $z_i(0)=z_i(1)=0$ for $i=2, 3$. By the mean value theorem, there exist zeroes of $z_i'$ on $x\in(0,1)$. And also for $1 \leq j \leq m_i$, either $z_i(a_{ij}^-)=0$, or $z_i(a_{ij}^-)\leq 0$ with $a_{ij}^-$ a local maximum of $z_i$ on $[0,1)$ and $z_i'(a_{ij}^-)=0$. Similarly, for $1 \leq j \leq n_i$, either $z_i(a_{ij}^+)=0$, or $z_i(a_{ij}^+)\geq 0$ with $a_{ij}^+$ a local minimum of $z_i$ on $[0,1)$ and $z_i'(a_{ij}^+)=0$. It is clear that for $i=2, 3$,
\begin{align*}
\frac{1}{2}V(z_i)&=\displaystyle\sum_{j=1}^{m_i}V_{a_{ij}^-}^{b_{ij}^-}(z_i)+\displaystyle\sum_{j=1}^{n_i}V_{a_{ij}^+}^{b_{ij}^+}(z_i)\\
&=\displaystyle\sum_{j=1}^{m_i}|z_i(b_{ij}^-) - z_i( a_{ij}^-)|\,+\,\displaystyle\sum_{j=1}^{n_i}|z_i(b_{ij}^+) - z_i(a_{ij}^+)|.
\end{align*}

 We substitute the two solutions to $(\ref{eqn_regulary2302})$, and take difference to have
\begin{align*}
0=&((x-x^3)z_2')'-(3+x^2)z_2'+\frac{1}{2}(x-x^3)(y_{11}'z_2'+z_1'y_{22}')\\
&+32x(1-x^2)^{-1}K_1^{-\frac{1}{3}}\phi_{11}^{-\frac{4}{3}}\phi_{12}^{-\frac{2}{3}}(1+\phi_{11}-\phi_{11}\phi_{12})(\phi_{21}-\phi_{11})\\
&+32x(1-x^2)^{-1}[K_1^{-\frac{1}{3}}\phi_{11}^{-\frac{4}{3}}\phi_{12}^{-\frac{2}{3}}(1+\phi_{11}-\phi_{11}\phi_{12})-K_2^{-\frac{1}{3}}\phi_{21}^{-\frac{4}{3}}\phi_{22}^{-\frac{2}{3}}(1+\phi_{21}-\phi_{21}\phi_{22})]\,(1-\phi_{21}),
\end{align*}
where the term
\begin{align*}
&|32x(1-x^2)^{-1}[K_1^{-\frac{1}{3}}\phi_{11}^{-\frac{4}{3}}\phi_{12}^{-\frac{2}{3}}(1+\phi_{11}-\phi_{11}\phi_{12})-K_2^{-\frac{1}{3}}\phi_{21}^{-\frac{4}{3}}\phi_{22}^{-\frac{2}{3}}(1+\phi_{21}-\phi_{21}\phi_{22})](1-\phi_{21})|\\
&\leq \frac{32}{3}x(1-x^2)^{-1}|1-\phi_{21}|\,\times\, K(0)^{-\frac{4}{3}}\phi_1^{-\frac{7}{3}}(0)\phi_2^{-\frac{5}{3}}(0)\,\times\,\big[\phi_1(0)\phi_2(0)|K_1-K_2|\\
&+K(0)\phi_2(0)\,(4+\phi_1(0)|1-\phi_2(0)|\,)\,|\phi_{11}-\phi_{21}|+K(0)\phi_1(0)(4+\phi_2\phi_1)|\phi_{12}-\phi_{22}|\big]\\
&\leq C x(1-x^2)^{-1}|1-\phi_{21}|\times (|z_1|+|z_2|+|z_3|),
\end{align*}
with some constant $C>0$ depending on the lower bound $\delta_0$ of $\phi_1(0)$ and here for $K(0)$ we mean the lower bound of $K_i(0)$ ($i=1,2$) obtained in $(\ref{ineqn_Kinitialbound301})$. Pick up $1\leq j \leq m_2$ (resp. $n_2$). Now we do integration of the equation on $[a_{2j}^{\pm}, b_{2j}^{\pm}]$ to have
\begin{align*}
&(x-x^3)z_2'( a_{2j}^{\pm})+3(z_2(b_{2j}^{\pm})-z_2(a_{2j}^{\pm}))+\int_{a_{2j}^{\pm}}^{b_{2j}^{\pm}}x^2z_2'(x)dx\\
=&\int_{a_{2j}^{\pm}}^{b_{2j}^{\pm}}\big[ \frac{1}{2}(x-x^3)(y_{11}'z_2'+z_1'y_{22}')+O(1)x(1-x^2)^{-1}|1-\phi_{21}|\times (|z_1|+|z_2|+|z_3|)\\
&+32x(1-x^2)^{-1}K_1^{-\frac{1}{3}}\phi_{11}^{-\frac{4}{3}}\phi_{12}^{-\frac{2}{3}}(1+\phi_{11}-\phi_{11}\phi_{12})(\phi_{21}-\phi_{11})\,\big]\,dx,
\end{align*}
with $O(1)$ uniformly bounded, depending only on $\delta_0$. It is clear that the three terms on the left hand side of the equation have the same sign.  On the right hand side the third term can not be controlled by the left hand side in general on the interval $x\in[0,1]$, while $(\phi_{21}-\phi_{11})$ has a different sign from the left hand side on the interval $[a_{2j}^{\pm}, b_{2j}^{\pm}]$. Therefore,
\begin{align}\label{ineqn_mainineqn301}
&3|z_2(b_{2j}^{\pm})-z_2(a_{2j}^{\pm})|\\
\leq &\int_{a_{2j}^{\pm}}^{b_{2j}^{\pm}}\big[ \frac{1}{2}(x-x^3)(|y_{11}'z_2'|+|z_1'y_{22}'|)+C_{21}x(1-x^2)^{-1}|1-\phi_{21}|\times (|z_1|+|z_2|+|z_3|)\big]\,dx,\notag
\end{align}
for some constant $C_{21}>0$ uniformly bounded  depending only on $\delta_0$. By $(\ref{ineqn_yd1left301})$ we have
\begin{align}\label{ineqn_y11d1left301}
0 < \frac{1}{2} (x-x^3) y_{11}'(x) \leq \frac{1}{2} C_2(|1-\phi_1(0)|+|1-\phi_2(0)|)x^2(1-x^2),
\end{align}
for $x\in(0,\frac{3}{4}]$; while by $(\ref{ineqn_yd1right301})$, we have
\begin{align}\label{ineqn_y11d1right301}
0\leq \frac{1}{2}(x-x^3) y_{11}'\leq \frac{1}{2} C_3\,\varepsilon^2 x (1-x^2)^4,
\end{align}
for $x \in [\frac{3}{4}, 1]$, with the constants $C_2, C_3>0$ depending on $\delta_0$. Also, by $(\ref{ineqn_yd1left302})$ and the monotonicity of $\phi_{i1}$ we have
\begin{align}
&\label{ineqn_y22boundleft302}|y_{i2}'(x)| \leq C_{24}|1-\phi_1(0)|x,\\
&x(1-x^2)^{-1}|1-\phi_{i1}(x)|\leq \frac{12}{7}|1-\phi_1(0)|,
\end{align}
for $x\in(0,\frac{3}{4})$ and $i=1,2$, with some constant $C_{24}=C_{24}(\delta_0)>0$; while by $(\ref{ineqn_yd1right302})$,
\begin{align}
&\label{ineqn_y22boundright302-1}|y_{i2}'(x)|\leq C_{25} \,\varepsilon(1-x^2),\\
&\label{ineqn_y22boundright302-123}x(1-x^2)^{-1}|1-\phi_{i1}(x)|\leq C_{25}\,\varepsilon (1-x^2),
\end{align}
for $x \in [\frac{3}{4}, 1]$ and $i=1,2$, with $C_{25}=C_{25}(\delta_0)>0$. Now we let $\varepsilon_0\in (0,1)$ be a constant satisfying that for $\varepsilon_0\leq x \leq 1$,
\begin{align}
&\label{ineqn_requy11d1left301e1}\frac{1}{2}\varepsilon^2  \big(1-x^2\big)^4C_3\,\leq \frac{1}{2},\,\,\,\,\frac{1}{2} \big(1-x^2\big)^2C_{25}\,\varepsilon\leq \frac{1}{4},\\
&\label{ineqn_requy11d1left302e2}C_{21}C_{25}(1-x^2)\varepsilon\leq \frac{1}{4}.
\end{align}
For that we let $\varepsilon_0\in (0,1)$ satisfy that
\begin{align}
&\label{ineqn_requy11d1left301e1-2}
(1-\varepsilon_0^2)\leq \min\{\frac{1}{(24C_3)^{\frac{1}{4}}},\,\frac{1}{(4\sqrt{6}C_{25})^{\frac{1}{2}}},\,\frac{1}{8\sqrt{6}\,C_{21}C_{25}}\}.
\end{align}

Similarly, now we consider $z_3$. Multiplying $(x-x^3)$ on both sides of $(\ref{equn_GBergerEinstein04})$, we have
\begin{align}\label{eqn_regulary2303}
((x-x^3)y_3')'-(3+x^2)y_3'+\frac{1}{2}(x-x^3)y_1'y_3'+32x(1-x^2)^{-1}K^{-\frac{1}{3}}\phi_1^{-\frac{1}{3}}\phi_2^{-\frac{2}{3}}(1-\phi_2)(-1+\phi_1+\phi_1\phi_2)=0.
\end{align}
We substitute the two solutions to $(\ref{eqn_regulary2303})$, and take difference of the two equations obtained to have
\begin{align*}
0=&((x-x^3)z_3')'-(3+x^2)z_3'+\frac{1}{2}(x-x^3)(y_{11}'z_3'+z_1'y_{23}')\\
&+32x(1-x^2)^{-1}K_1^{-\frac{1}{3}}\phi_{11}^{-\frac{1}{3}}\phi_{12}^{-\frac{2}{3}}(-1+\phi_{11}+\phi_{11}\phi_{12})(\phi_{22}-\phi_{12})\\
&+32x(1-x^2)^{-1}[K_1^{-\frac{1}{3}}\phi_{11}^{-\frac{1}{3}}\phi_{12}^{-\frac{2}{3}}(-1+\phi_{11}+\phi_{11}\phi_{12})-K_2^{-\frac{1}{3}}\phi_{21}^{-\frac{1}{3}}\phi_{22}^{-\frac{2}{3}}(-1+\phi_{21}+\phi_{21}\phi_{22})]\,(1-\phi_{22}),
\end{align*}
where the term
\begin{align*}
&|32x(1-x^2)^{-1}[K_1^{-\frac{1}{3}}\phi_{11}^{-\frac{1}{3}}\phi_{12}^{-\frac{2}{3}}(-1+\phi_{11}+\phi_{11}\phi_{12})-K_2^{-\frac{1}{3}}\phi_{21}^{-\frac{1}{3}}\phi_{22}^{-\frac{2}{3}}(-1+\phi_{21}+\phi_{21}\phi_{22})](1-\phi_{22})|\\
&\leq \frac{32}{3}x(1-x^2)^{-1}|1-\phi_{22}|\,\times\, K(0)^{-\frac{1}{3}}\phi_1^{-\frac{4}{3}}(0)\phi_2^{-\frac{5}{3}}(0)\,\times\,\big[\phi_1(0)\phi_2(0)|K_1-K_2|\\
&+K(0)\phi_2(0)\,(1+2\phi_1(0)(1+\phi_2(0))\,)\,|\phi_{11}-\phi_{21}|+K(0)\phi_1(0)(2+\phi_2\phi_1)|\phi_{12}-\phi_{22}|\big]\\
&\leq C x(1-x^2)^{-1}|1-\phi_{22}|\times (|z_1|+|z_2|+|z_3|),
\end{align*}
with some constant $C=C(\delta_0)>0$ and here for $K(0)$ we mean the lower bound of $K_i(0)$ ($i=1,2$) obtained in $(\ref{ineqn_Kinitialbound301})$. Pick up $1\leq j \leq m_3$ (resp. $n_3$). Now we do integration of the equation on $[a_{3j}^{\pm}, b_{3j}^{\pm}]$ to have
\begin{align*}
&(x-x^3)z_3'( a_{3j}^{\pm})+3(z_3(b_{3j}^{\pm})-z_3(a_{3j}^{\pm}))+\int_{a_{3j}^{\pm}}^{b_{3j}^{\pm}}x^2z_3'(x)dx\\
=&\int_{a_{3j}^{\pm}}^{b_{3j}^{\pm}}\big[ \frac{1}{2}(x-x^3)(y_{11}'z_3'+z_1'y_{23}')+O(1)x(1-x^2)^{-1}|1-\phi_{22}|\times (|z_1|+|z_2|+|z_3|)\\
&+32x(1-x^2)^{-1}K_1^{-\frac{1}{3}}\phi_{11}^{-\frac{1}{3}}\phi_{12}^{-\frac{2}{3}}(-1+\phi_{11}+\phi_{11}\phi_{12})(\phi_{22}-\phi_{12})\,\big]\,dx,
\end{align*}
with $O(1)$ uniformly bounded, depending only on $\delta_0$.  It is clear that the three terms on the left hand side of the equation have the same sign. Similarly as the case of $z_2$, the third term on the right hand side has a different sign from the left hand side on the interval $[a_{3j}^{\pm}, b_{3j}^{\pm}]$. Therefore,
\begin{align}\label{ineqn_mainineqn302}
&3|z_3(b_{3j}^{\pm})-z_3(a_{3j}^{\pm})|\\
\leq &\int_{a_{3j}^{\pm}}^{b_{3j}^{\pm}}\big[ \frac{1}{2}(x-x^3)(|y_{11}'z_3'|+|z_1'y_{23}'|)+C_{31}x(1-x^2)^{-1}|1-\phi_{22}|\times (|z_1|+|z_2|+|z_3|)\big]\,dx,\notag
\end{align}
for some constant $C_{31}=C_{31}(\delta_0)>0$ uniformly bounded. Also, by $(\ref{ineqn_yd1left303})$ we have
\begin{align}
&\label{ineqn_y23boundleft303}|y_{i3}'(x)| \leq C_{34}|1-\phi_2(0)|x,\\
&x(1-x^2)^{-1}|1-\phi_{i2}(x)|\leq \frac{12}{7}|1-\phi_2(0)|,
\end{align}
for $x\in(0,\frac{3}{4})$ and $i=1,2$, with some constant $C_{34}=C_{34}(\delta_0)>0$; while by $(\ref{ineqn_yd1right302})$,
\begin{align}
&\label{ineqn_y23boundright303}|y_{23}'(x)|\leq C_{35}\,\varepsilon(1-x^2),\\
&\label{ineqn_y23boundright303abc}x(1-x^2)^{-1}|1-\phi_{22}(x)|\leq C_{35}\,\varepsilon (1-x^2),
\end{align}
for $x \in [\frac{3}{4}, 1]$ and $i=1,2$, with $C_{35}=C_{35}(\delta_0)>0$.  Now we let $\varepsilon_0\in(0,1)$ be a constant satisfying that for $\varepsilon_0\leq x \leq 1$,
\begin{align}
&\label{ineqn_conditiony13epsphi301e1}\frac{1}{2} \big(1-x^2 \big)^2C_{35}\,\varepsilon\leq \frac{1}{4},\\
&\label{ineqn_conditiony13epsphi302e2}C_{31} C_{35} \,\varepsilon (1-x^2) \leq \frac{1}{4}.
\end{align}
For that we let $\varepsilon_0\in (0,1)$ satisfy that
\begin{align}
&\label{ineqn_conditiony13epsphi301e3}(1-\varepsilon_0^2) \leq \min\{\frac{1}{\big(4\sqrt{6}\,C_{35} \big)^{\frac{1}{2}}},\,\,\frac{1}{8\sqrt{6}\,C_{31}C_{35}}, \}.
\end{align}

We then turn to the estimate of $z_1$.

Without loss of generality, we assume that $z_1(0)\geq 0$. If $z_1(0)>0$, by $(\ref{equn_GBergerEinstein02})$, we have $z_1''(0)>0$, and hence $z_1>0$ and $z_1'>0$ on the first interval of monotonicity $(0, b_{11}^+)$. Otherwise, if $z_1(0)=0$, in the expansion of $g_x$, denote the non-local term $g^{(3)}=\frac{1}{4}\bar{h}^{(3)}=\text{diag}(-\phi_1^{-1}(0))(a_1+\phi_2^{-1}(0)a_2), a_1, a_2)$. It has been shown that if the two solutions admit the same initial data $\phi_1(0),\,\phi_2(0)$ and the nonlocal term $g^{(3)}$, then the two solutions coincide on $x\in[0,1]$, see \cite{Biquard2}. In the expansion  $z_1(x)=\displaystyle\sum_{k=1}z_1^{(k)}(0)x^k$ at $x=0$, the coefficients $z_1^{(k)}(0)$ can be calculated explicitly inductively using the Einstein equations and expressed by the data $\phi_1(0)$, $\phi_2(0)$ and $g^{(3)}(0)$. In the expansion of $z_1=y_{11}-y_{21}$ at $x=0$ by the equation $(\ref{equn_GBergerEinstein02})$, one has that the nonlocal term is involved starting from the term $z_1^{(5)}(0)$ and $z^{(k)}(0)=0$ for $k\leq 4$. Assume that these two solutions has different non-local terms $g^{(3)}$, then there exists $k\geq 5$ so that $z_1^{(k)}(0)\neq0$. With out loss of generality, when $z_1(0)=0$, we assume $z_1'>0$ on the first interval of monotonicity $(0, b_{11}^+)$ of $z_1$.

Let $0=x_0<x_1<...<x_{k_1}<x_{k_1+1}=1$ be all the local maximum points and local minimum points of $z_1$ on $x\in[0,1]$, with $k_1$ some integers. Therefore, for any $0\leq j \leq k_1$, we have that $z_1'$ keeps the sign on $x\in(x_j, x_{j+1})$ with possibly finitely many zeroes on the interval.

 Multiply $x(1-x^2)$ on both sides of $(\ref{equn_GBergerEinstein01})$, we have
\begin{align}\label{eqn_y1regular1301}
(x(1-x^2)y_1')'-2y_1'+\frac{1}{6}x(1-x^2)(y_1')^2+\frac{1}{3}x(1-x^2)((y_2')^2+y_2'y_3'+(y_3')^2)=0.
\end{align}
Substituting the two solutions into $(\ref{eqn_y1regular1301})$ and take difference of the two equations obtained, we have
\begin{align}\label{eqn_z1regular1301}
0=&(x(1-x^2)z_1')'-2z_1'+\frac{1}{6}x(1-x^2)(y_{11}'+y_{21}')z_1'\\
&+\frac{1}{3}x(1-x^2)\,[(y_{12}')^2+y_{12}'y_{13}'+(y_{13}')^2-(y_{22}')^2-y_{22}'y_{23}'-(y_{23}')^2]\notag.
\end{align}
For each $0\leq j \leq k_1$, we do integration of $(\ref{eqn_z1regular1301})$ on the interval $x\in [x_j,x_{j+1}]$,
\begin{align}\label{eqn_z1regularint1301}
&2(z_1(x_{j+1})- z_1(x_j)) -\frac{1}{6}\,\int_{x_j}^{x_{j+1}}\,x(1-x^2)(y_{11}'+y_{21}')z_1' dx\\
=&\frac{1}{3}\int_{x_j}^{x_{j+1}}x(1-x^2)\,[(y_{12}')^2+y_{12}'y_{13}'+(y_{13}')^2-(y_{22}')^2-y_{22}'y_{23}'-(y_{23}')^2]dx,\notag\\
=&\frac{1}{3}\int_{x_j}^{x_{j+1}}x(1-x^2)\,[(y_{12}'+y_{22}'+y_{13}')z_2'+ (y_{13}'+y_{23}'+y_{22}')z_3']dx.\notag
\end{align}
By $(\ref{ineqn_y11d1left301})$ and $(\ref{ineqn_y11d1right301})$, we have
\begin{align}\label{ineqn_y11d1left301-1}
0 < x(1-x^2)(y_{11}'+y_{21}') \leq 2 C_2(|1-\phi_1(0)|+|1-\phi_2(0)|)x^2(1-x^2),
\end{align}
for $x\in(0,\frac{3}{4}]$, and
\begin{align}\label{ineqn_y11d1right301-1}
0\leq x(1-x^2)(y_{11}'+y_{21}')\leq 2 C_3\varepsilon^2x(1-x^2)^4,
\end{align}
for $x \in [\frac{3}{4}, 1]$, with the constants $C_2=C_2(\delta_0)>0$ and $ C_3=C_3(\delta_0)>0$. Also, by $(\ref{ineqn_y22boundleft302})$ and $(\ref{ineqn_y23boundleft303})$, we have
\begin{align}
&x(1-x^2)|y_{12}'+y_{22}'+y_{13}'| \leq (2 C_{24}|1-\phi_1(0)| + C_{34}|1-\phi_2(0)|)x^2(1-x^2),\\
&x(1-x^2)\,|y_{13}'+y_{23}'+y_{22}'| \leq ( C_{24}|1-\phi_1(0)| + 2 C_{34}|1-\phi_2(0)|)x^2(1-x^2),
\end{align}
for $x\in(0,\frac{3}{4})$, with some constants $C_{24}=C_{24}(\delta_0)$ and $C_{34}=C_{34}(\delta_0)$ independent of the solution and the initial data; while by $(\ref{ineqn_y22boundright302-1})$ and $(\ref{ineqn_y23boundright303})$, we have
\begin{align}
&x(1-x^2)|y_{12}'+y_{22}'+y_{13}'| \leq (2 C_{25} + C_{35})\,\varepsilon x(1-x^2)^2,\\
&x(1-x^2)\,|y_{13}'+y_{23}'+y_{22}'|\leq ( C_{25} + 2C_{35})\,\varepsilon x(1-x^2)^2,
\end{align}
for $x \in [\frac{3}{4}, 1]$, with some constants $C_{25}=C_{25}(\delta_0)>0$ and $C_{35}=C_{35}(\delta_0)>0$.

Now we let $\varepsilon_0\in(0,1)$ be a constant satisfying that for $\varepsilon_0\leq x \leq 1$,
\begin{align}
&\label{ineqn_conditiony11epsphi301e1-1}\frac{1}{3}\times \big(1-x^2\big)^4 C_3\,\varepsilon^2\leq 1,\\
&\label{ineqn_conditiony11epsphi302e2-2}(1-x^2)\leq \min\{\,\big(\varepsilon\,(2C_{25}+C_{35})\big)^{-\frac{1}{2}},\,\big(\varepsilon\,(C_{25}+2C_{35})\big)^{-\frac{1}{2}}\}.
\end{align}
For that we let $\varepsilon_0\in (0,1)$ satisfy that
\begin{align}
\label{ineqn_conditiony11epsphi302e2-2-2}(1-\varepsilon_0^2)\leq \min\big\{\,\big(8\,C_3\big)^{-\frac{1}{4}},\,\big(2\sqrt{6}\,(2C_{25}+C_{35})\big)^{-\frac{1}{2}},\,\big(2\sqrt{6}\,(C_{25}+2C_{35})\big)^{-\frac{1}{2}}\big\}.
\end{align}
We now choose $\varepsilon_0\in(0,1)$ to be the smallest number which satisfies $(\ref{ineqn_requy11d1left301e1-2})$, $(\ref{ineqn_conditiony13epsphi301e3})$ and $(\ref{ineqn_conditiony11epsphi302e2-2-2})$. 
By the monotonicity of $y_i$ on $x\in [0,1]$ and  the interior estimates of the second order elliptic equations $(\ref{equn_GBergerEinstein01})-(\ref{equn_GBergerEinstein04})$ on $x\in[\frac{3}{4}, \varepsilon_0]$, there exists some constant $C_4=C_4(\delta_0,\varepsilon_0)>0$ independent of the initial data and the solution so that
\begin{align*}
&|y_1^{(k)}(x)| \leq C_4(|1-\phi_1(0)|+|1-\phi_2(0)|),\,\,\,\text{and}\\
&|y_i^{(k)}(x)| \leq C_4(|1-\phi_{i-1}(0)|),
\end{align*}
for $2\leq i \leq 4$, $0 \leq k\leq 4$ and $ x \in [ \frac{3}{4}, \varepsilon_0]$.

 Now we assume that $\phi_1(0)$ and $\phi_2(0)$ satisfy that
\begin{align}
&\label{ineqn_requy11d1left301}\frac{1}{8}C_2(|1-\phi_1(0)|+|1-\phi_2(0)|)\leq \frac{1}{2},\,\,\,\,\,\,\,\,\,\,\frac{1}{8}C_{24}|1-\phi_1(0)|\leq \frac{1}{4},\\
&\label{ineqn_requy11d1left302aa2}\frac{\varepsilon_0}{1-\varepsilon_0^2}|1-\phi_1(0)| C_{21}\leq \frac{1}{4},\,\,\,\,\,\,\,\,\,\frac{1}{2} \times \frac{21}{64}C_4(|1-\phi_1(0)|+|1-\phi_2(0)|)\leq \frac{1}{2}.
\end{align}
Combining the conditions $(\ref{ineqn_requy11d1left301})-(\ref{ineqn_requy11d1left302aa2})$ on $\phi_1(0)$ and $\phi_2(0)$,  using the control $(\ref{ineqn_y11d1left301})- (\ref{ineqn_y22boundright302-123})$, and by the choice of $\varepsilon_0\in(0,1)$, we apply the inequality $(\ref{ineqn_mainineqn301})$ to have
\begin{align*}
&3|z_2(b_{2j}^{\pm})-z_2(a_{2j}^{\pm})|\leq \frac{1}{2}|z_2(b_{2j}^{\pm})-z_2(a_{2j}^{\pm})|+ \frac{1}{4}V_{a_{2j}^{\pm}}^{b_{2j}^{\pm}}(z_1)+ \frac{1}{4}(b_{2j}^{\pm}-a_{2j}^{\pm})\sup_{x\in(0,1)}(|z_1|+|z_2|+|z_3|),\\
&\frac{5}{2}|z_2(b_{2j}^{\pm})-z_2(a_{2j}^{\pm})|\leq \frac{1}{4}V_{a_{2j}^{\pm}}^{b_{2j}^{\pm}}(z_1)+ \frac{1}{4}(b_{2j}^{\pm}-a_{2j}^{\pm})(V(z_1)+V(z_2)+V(z_3)).
\end{align*}
Summarizing this inequality for all $j$, we have
\begin{align*}
\frac{5}{4}V(z_2)\leq \frac{1}{4}V(z_1)+ \frac{1}{4}(V(z_1)+V(z_2)+V(z_3)),
\end{align*}
and hence
\begin{align}\label{ineqn_variationz2bound302}
V(z_2)\leq \frac{1}{2}V(z_1)+ \frac{1}{4}V(z_3).
\end{align}
If in addition, we assume $\phi_2(0)$ satisfies that
\begin{align}\label{ineqn_conditiony13epsphi301}
\frac{1}{8}C_{34}|1-\phi_2(0)|\leq \frac{1}{4},\,\,\,\,\,\frac{\varepsilon_0}{1-\varepsilon_0^2}|1-\phi_2(0)|\leq \frac{1}{4},
\end{align}
then by the choice of $\varepsilon_0>0$ and the control $(\ref{ineqn_y23boundleft303})-(\ref{ineqn_y23boundright303abc})$, substituting the conditions $(\ref{ineqn_requy11d1left301})-(\ref{ineqn_requy11d1left302aa2})$ and $(\ref{ineqn_conditiony13epsphi301})$ on $\phi_1(0)$ and $\phi_2(0)$ to the inequality $(\ref{ineqn_mainineqn302})$, we have
\begin{align*}
&3|z_3(b_{3j}^{\pm})-z_3(a_{3j}^{\pm})|\leq \frac{1}{2}|z_3(b_{3j}^{\pm})-z_3(a_{3j}^{\pm})|+ \frac{1}{4}V_{a_{3j}^{\pm}}^{b_{3j}^{\pm}}(z_1)+ \frac{1}{4}(b_{3j}^{\pm}-a_{3j}^{\pm})\sup_{x\in(0,1)}(|z_1|+|z_2|+|z_3|),\\
&\frac{5}{2}|z_3(b_{3j}^{\pm})-z_3(a_{3j}^{\pm})|\leq \frac{1}{4}V_{a_{3j}^{\pm}}^{b_{3j}^{\pm}}(z_1)+ \frac{1}{4}(b_{3j}^{\pm}-a_{3j}^{\pm})(V(z_1)+V(z_2)+V(z_3)).
\end{align*}
Summarizing this inequality for all $j$, we have
\begin{align*}
\frac{5}{4}V(z_3)\leq \frac{1}{4}V(z_1)+ \frac{1}{4}(V(z_1)+V(z_2)+V(z_3)),
\end{align*}
and hence
\begin{align}\label{ineqn_variationz3bound303}
V(z_3)\leq \frac{1}{2}V(z_1)+ \frac{1}{4}V(z_2).
\end{align}

Now for the estimate of the total variation of $z_1$ on $x\in[0,1]$, we give a third condition on the initial data $\phi_1(0)$ and $\phi_2(0)$:
\begin{align}
&\label{ineqn_conditiony11epsphi301}\frac{1}{3}\times \frac{21}{64}C_4 (|1-\phi_1(0)|+|1-\phi_2(0)|) \leq 1,\,\,\,\,\,\frac{21}{64}C_4(2 |1-\phi_1(0)| + |1-\phi_2(0)|)\leq 1,\\
&\frac{21}{64}C_4( |1-\phi_1(0)| + 2|1-\phi_2(0)|)\leq 1,\,\,\,\,\,\frac{1}{4}(2 C_{24}|1-\phi_1(0)| + C_{34}|1-\phi_2(0)|)\leq 1,\\
&\label{ineqn_conditiony11epsphi302}\frac{1}{4}( C_{24}|1-\phi_1(0)| + 2 C_{34}|1-\phi_2(0)|)\leq 1.
\end{align}
 Then by $(\ref{eqn_z1regularint1301})$, we have
\begin{align}
|z_1(x_{j+1})- z_1(x_j)|\leq \frac{1}{3}(V_{x_j}^{x_{j+1}}(z_2)+V_{x_j}^{x_{j+1}}(z_3)).
\end{align}
Summarizing this inequality for all $j$, one has
\begin{align}\label{ineqn_variationz1bound301}
V(z_1)\leq \frac{1}{3}(V(z_2)+V(z_3)).
\end{align}

We summarize the above argument: Let $\varepsilon_0\in (0,1)$ be a constant defined below $(\ref{ineqn_conditiony11epsphi302e2-2-2})$. Assume that the initial data $\phi_1(0)$ and $\phi_2(0)$ satisfy the assumptions $(\ref{ineqn_requy11d1left301})-(\ref{ineqn_requy11d1left302aa2})$, $(\ref{ineqn_conditiony13epsphi301})$ and $(\ref{ineqn_conditiony11epsphi301})-(\ref{ineqn_conditiony11epsphi302})$, then by $(\ref{ineqn_variationz2bound302})$, $(\ref{ineqn_variationz3bound303})$ and $(\ref{ineqn_variationz1bound301})$, and the non-negativity of $V(z_i)$ ($1\leq i \leq 3$), one has
\begin{align*}
V(z_i)=0,
\end{align*}
for $1\leq i\leq 3$. Therefore, $z_i=0$ for $1\leq i\leq 3$. The uniqueness of the solution is proved. In particular, when we take $\delta_0=\frac{63}{64}$, we can choose $(1-\varepsilon_0^2)=0.000025$, $C_4=3\times 10^7$, and $\eta_0= 1-3\times 10^{-8}$ (this is not the optimal constant in the calculation and it could be smaller). If one could estimate $|W|_g$ to be small, then 
 by the same approach of the above estimate, $\eta_0\in(0,1)$ in the theorem could be much smaller. This completes the proof of Theorem \ref{thm_Einsteineqnsbvpuniqueness1}.

 \end{proof}

\vskip0.2cm

\section{Uniqueness and Existence of Non-positively Curved Conformally Compact Einstein Metrics with  High Dimensional Homogeneous Sphere Conformal Infinity}\label{section4}

A Riemannian manifold $(N, g)$ is called a {\it homogeneous Riemannian manifold}, if there exists a group $G$ of isometries acting on $(N, g)$ transitively, and hence $g$ is a $G$-invariant metric. Notice that we have the diffeomorphism $N \cong G/H$, with $H$ the isotropy group of some point on $N$. Any $G$-invariant metric $g$ on $N$ has the structure described as follows (see for instance \cite{Ziller1}\cite{Deng}). Let $\mathfrak{g}, \mathfrak{h}$ be the Lie algebra of $G$ and $H$. $\mathfrak{g}$ is equivalent to the Lie algebra of the Killing vector fields of the metric $g$ on $N$. Moreover, if $H$ is compact, $\mathfrak{g}$ has an $\text{ad}\mathfrak{h}$ invariant splitting $\mathfrak{g}=\mathfrak{h}\oplus \mathfrak{p}$ such that $[\mathfrak{h},\mathfrak{p}]\subseteq \mathfrak{p}$. $H$ acts on $\mathfrak{p}$ by the adjoint map which induces a splitting:
\begin{align}\label{eqn_splittingmetric}
\mathfrak{p}=\mathfrak{p}_0\oplus \mathfrak{p}_1\oplus...\oplus\mathfrak{p}_p,
\end{align}
where $\mathfrak{p}_i$ is an irreducible subspace for $1\leq i\leq p$, and $H$ acts on $\mathfrak{p}_0$ trivially. Let $B$ be a bi-invariant metric on $G$. Any $G$-invariant metric $g$ on $N\cong G/H$ is determined by its value on $\mathfrak{p}$, which has a splitting
\begin{align*}
g=h\big|_{\mathfrak{p}_0}+\displaystyle\sum_{i=1}^p\alpha_iB\big|_{\mathfrak{p}_i}
\end{align*}
with $h\big|_{\mathfrak{p}_0}$ an arbitrary metric on $\mathfrak{p}_0$, and any $\alpha_i>0$.

In this paper, for a homogeneous space we always mean a homogeneous Riemannian space.

The homogeneous metrics on spheres are $G$-invariant metrics under the transitive action of some Lie group $G$ on the spheres.  It is well known that Lie Groups acting effectively and transitively on spheres have been classified by D. Montgomery and H. Samelson (\cite{MS}), and A. Borel (\cite{Borel1}, \cite{Borel2}), see \cite{Besse} (p. 179) and also \cite{Ziller1} for instance.

Up to a scaling factor and isometry, homogeneous metrics on spheres are in one of the three classes: a one parameter family of $\text{U}(k+1)$-invariant metrics (equivalently, $\text{SU}(k+1)$-invariant) on $\mathbb{S}^{2k+1}\cong \text{SU}(k+1)/ \text{SU}(k)$ $(k\geq1)$, a three parameter family of $\text{Sp}(k+1)$-invariant metrics on $\mathbb{S}^{4k+3}\cong \text{Sp}(k+1)/\text{Sp(k)}$ (containing the $\text{SU}(2k+2)$-invariant metrics as a subset in these dimensions) with $k\geq0$, and a one parameter family of $\text{Spin}(9)$-invariant metrics on $\mathbb{S}^{15}\cong \text{Spin}(9)/ \text{Spin}(7)$. For more details, see \cite{Ziller1}. The Killing vector fields of the homogeneous metrics determine the symmetry structure of the $G$-invariant metrics, and generate the group action of $G$ on the spheres. 


Recall that $\mathfrak{p}=\mathfrak{p}_0\oplus \mathfrak{p}_1$ for the first two cases, where the adjoint action of $H$ on $\mathfrak{p}_0$ is trivial and $\mathfrak{p}_1$ is an invariant subspace which is irreducible under the adjoint action of $H$. Here $\text{dim}(\mathfrak{p}_0)=1$ for $\mathbb{S}^{2k+1} \cong \text{SU}(k+1)/\text{SU}(k)$ and the action of $\text{SU}(k)$ on $\mathfrak{p}_1$ is the usual action of $\text{SU}(k)$ on $\mathbb{C}^k$, while $\text{dim}(\mathfrak{p}_0)=3$ for $\mathbb{S}^{4k+3} \cong \text{Sp}(k+1)/\text{Sp}(k)$ and the action of $\text{Sp}(k)$ on $\mathfrak{p}_1$ is the usual action of $\text{Sp}(k)$ on $\mathbb{H}^k$. For the third case, $\mathfrak{p}=\mathfrak{p}_1\oplus \mathfrak{p}_2$ with $\text{dim}(\mathfrak{p}_1)=7$ and $\text{dim}(\mathfrak{p}_2)=8$, where the isotropy representations on $\mathfrak{p}_1$ and $\mathfrak{p}_2$ are the unique irreducible representations of $\text{Spin}(7)$ in that dimension.

Let $(M^{n+1}, g)$ be a CCE manifold which is Hadamard with its conformal infinity $(\mathbb{S}^n, [\hat{g}])$ so that $(\mathbb{S}^n, \hat{g})$ is a homogeneous space. Let $p_0 \in M$ be the center of gravity, $r$ be the distance function to $p_0$ on $(M, g)$ and $x=e^{-r}$ the geodesic defining function about $C\hat{g}$ with some constant $C>0$ as discussed in Section \ref{Sect:preliminary}. Now we pick up a point $q\in \partial M=\mathbb{S}^{n}$. Let $\gamma$ be the geodesic connecting $q$ and $p_0$. To choose the polar coordinate $(r, \theta)$ centered at $p_0$ near $\gamma$, we turn to the choice of the local coordinates $\theta=(\theta^1,...,\theta^n)$ on $\mathbb{S}^n$ near the point $q$ based on the above decomposition of the Lie algebra. 
For a 
given transitive $G$-action on $\mathbb{S}^n$, we can choose a basis of $G$-invariant action fields $Y_1,...,Y_n\in \mathfrak{p}$ 
and a coordinate $(\theta^1,...,\theta^n)$ near $q$ so that $d\theta^i=\sigma_i$ at $q$ with $\sigma_i$ the $1$-form corresponding to $Y_i$ such that under the coordinate $(\theta^1,..,\theta^n)$ each $G$-invariant metric $h$ has the diagonal form 
\begin{align*}
h=\text{diag}\big(I_1, I_2,...,I_2\big)
\end{align*}
at $q$, with $I_1,\,I_2$ some positive numbers, for $\mathbb{S}^{n}\cong \text{SU}(k+1)/\text{SU}(k)$ with $G=\text{SU}(k+1)$ and $n=2k+1$; while
\begin{align*}
h=\text{diag}\big(I_1, I_2, I_3, I_4, ...,I_4\big)
\end{align*}
with $I_1,...,I_4$ some positive numbers, up to an $\text{SO}(3)$ rotation in the subspace $\mathfrak{p}_0$ generated by $Y_1,\,Y_2$ and $Y_3$ for $\mathbb{S}^{n}\cong \text{Sp}(k+1)/\text{Sp}(k)$ with $G=\text{Sp}(k+1)$ and $n=4k+3$, see \cite{Ziller1}.  Indeed,  we consider the natural embedding $\mathbb{S}^n\subseteq \mathbb{R}^{n+1}$ with $\mathbb{S}^n$ the unit sphere on the Euclidean space $\mathbb{R}^{n+1}$, under the coordinates $(x_1,y_1;x_2,y_2;...;x_{\frac{n+1}{2}}, y_{\frac{n+1}{2}})$.  Without loss of generality, we assume that $q=(1,0,...,0)$, and choose $(\theta^1,...,\theta^n)=(y_1,x_2,y_2,...,x_{\frac{n+1}{2}},y_{\frac{n+1}{2}})$ near $q$. Now assume the $\text{SU}(k+1)$-invariant metric $\hat{g}$ on $\mathbb{S}^n$ with $n=2k+1$ has the standard diagonal form
\begin{align}\label{eqn_initialdataSUn}
\hat{g}=\text{diag}\big(\lambda_1, \lambda_2,...,\lambda_2\big)
\end{align}
at $q$ which is $\theta_0= (0,...,0)$ under the coordinate $\theta=(\theta^1,...,\theta^{2k+1})$, with $\lambda_1, \lambda_2 >0$; while the $\text{Sp}(k+1)$-invariant metric $\hat{g}$ on $\mathbb{S}^n$ with $n=4k+3$ has the standard diagonal form
\begin{align}\label{eqn_initialdataSpn}
\hat{g}=\text{diag}\big(\lambda_1, \lambda_2, \lambda_3, \lambda_4, ...,\lambda_4\big)
\end{align}
at $q$ which is $\theta_0= (0,...,0)$ under the coordinate $\theta=(\theta^1,...,\theta^{4k+3})$, with $\lambda_i >0$, $1\leq i\leq 4$. 
 Then by the extension of the Killing vector fields and the smoothness result in Theorem \ref{thm_expansion1}, under the polar coordinate $(r,\theta)$, the $\text{SU}(k+1)$-invariant metric $g_r$ on $\partial B_r(p_0)$ for $r\geq 0$ has the form
\begin{align}\label{eqn_metricSUn}
g_r=\sinh^{2}(r)\,\,\text{diag}\big(I_1(x), I_2(x),...,I_2(x)\big)
\end{align}
at the points along the geodesic $\gamma=\{(r,\theta_0)\big|r\geq 0\}= \{(r,0,...,0)\big|r\geq 0\}$, for some positive functions $I_1(x), I_2(x) \in C^{\infty}([0,1])$, and the Ricci curvature of $g_r$ has the expression
\begin{align}\label{eqn_RicciSUn}
\text{Ric}(g_r)=\text{diag}\big((n-1)I_1^2(x)I_2^{-2}(x), (n+1)-2I_1(x)I_2^{-1}(x),...,(n+1)-2I_1(x)I_2^{-1}(x)\big),
\end{align}
along $\gamma$, with respect to the conformal infinity $(\mathbb{S}^n, [\hat{g}])$ for $n=2k+1$;
while the $\text{Sp}(k+1)$-invariant metric $g_r$ on $\partial B_r(p_0)$ for $r\geq 0$ has the form
\begin{align}\label{eqn_metricSpn}
g_r=\sinh^{2}(r)\,\,\text{diag}\big(I_1(x), I_2(x), I_3(x), I_4(x), ..., I_4(x)\big),
\end{align}
along $\gamma=\{(r,\theta_0)\big|r\geq 0\}$, for some positive functions $I_i(x) \in C^{\infty}([0,1])$ ($1\leq i\leq 4$), and the Ricci curvature of $g_r$ has the expression (see \cite{Ziller1})
\begin{align}\label{eqn_RicciSpn}
\text{Ric}(g_r)=\text{diag}\big(\,\,&(4nt_1^2+\frac{2(t_1^2-(t_2-t_3)^2)}{t_2t_3}), (4nt_2^2+\frac{2(t_2^2-(t_1-t_3)^2)}{t_1t_3}), (4nt_3^2+\frac{2(t_3^2-(t_1-t_2)^2)}{t_1t_2}),\\
& 4n+8-2(t_1+t_2+t_3),\, ...,\, 4n+8-2(t_1+t_2+t_3)\big),
\end{align}
along $\gamma$, where $t_i=\frac{I_i(x)}{I_4(x)}$ ($1\leq i\leq 3$),  with respect to the conformal infinity
$(\mathbb{S}^n, [\hat{g}])$ for $n=4k+3$.

 Now we give a way to use the symmetry extension to derive $(\ref{eqn_metricSUn})$ and $(\ref{eqn_RicciSUn})$ from the initial data $(\ref{eqn_initialdataSUn})$ and the Einstein equations as in \cite{Li}. 
 Notice that the extended Killing vector fields $X_j$ in $(M, g)$ with respect to $Y_j$ satisfy $X_j=\displaystyle\sum_{m=1}^nX_j^m\frac{\partial}{\partial \theta^m}=\sum_{m=1}^nY_j^m\frac{\partial}{\partial \theta^m}$ on $(M,g)$, with the components $X_j^m$ independent of $r$ for $j=1,...,n$, under the polar coordinate $(r, \theta)=(r, \theta^1,...,\theta^n)$. And also
\begin{align*}
g =dr^2+g_r = dr^2 + \sum_{i,j=1}^ng_{ij}d\theta^id \theta^j.
\end{align*}
Therefore
\begin{align*}
&\frac{\partial}{\partial \theta^i}(X_q^mg_{mj})+\frac{\partial}{\partial \theta^j}(X_q^mg_{mi})-2\Gamma_{ij}^p(g)X_q^mg_{mp}=0,\,\,\,\text{which is}\\
&\frac{\partial}{\partial \theta^i}X_q^pg_{pj}+\frac{\partial}{\partial \theta^j }X_q^pg_{pi}+X_q^m\frac{\partial}{\partial \theta^m}g_{ij}=0.
\end{align*}
Define the inverse of the matrix with elements $X_i^j$ for $1\leq i, j \leq n$ as
\begin{align*}
\left(\begin{matrix}&&&\\ &Z_i^j&&\\ &&& \end{matrix}\right)=\left(\begin{matrix}&&&\\ &X_i^j&&\\ &&& \end{matrix}\right)^{-1}.
\end{align*}
We denote
\begin{align*}
C_{ij}^p=Z_i^q\frac{\partial}{\partial \theta^j}X_q^p,
\end{align*}
and
\begin{align}\label{equn_twotensor}
T_{ij}^p = - T_{ji}^p=C_{ij}^p- C_{ji}^p=Z_i^qZ_j^m[X_m, X_q]^p,
\end{align}
with $[X_m, X_q]$ the Lie bracket of $X_m$ and $X_q$.
Notice that $C_{ij}^p$ and $T_{ij}^p$ are independent of $r$ and the metric. Then
\begin{align}
&\label{equn_tangentspace}\frac{\partial}{\partial \theta^q}g_{ij}=-C_{qi}^mg_{mj}-C_{qj}^mg_{mi},\\
&\Gamma_{ij}^p(g_r)=\frac{1}{2}[-(C_{ij}^p+C_{ji}^p)+g^{pq}(-C_{iq}^mg_{mj}-C_{jq}^mg_{mi}+C_{qi}^mg_{mj}+C_{qj}^mg_{mi})].
\end{align}
The Ricci curvature of $g_r$ on the geodesic spheres has the expression (see \cite{Li}) 
\begin{align}
R_{ij}(g_r)&\label{equn_Riccitensor}=\frac{1}{2}(\frac{\partial}{\partial \theta^p}T_{ij}^p+C_{ip}^qT_{qj}^p+C_{qj}^pT_{ip}^q+C_{qp}^pT_{ji}^q)-\frac{1}{2}g^{pq}(\frac{\partial}{\partial \theta^p}T_{iq}^m+C_{ip}^sT_{sq}^m+C_{pq}^sT_{is}^m-C_{ps}^mT_{iq}^s)g_{mj}\\
&-\frac{1}{2}g^{pq}(\frac{\partial}{\partial \theta^p}T_{jq}^m+C_{jp}^sT_{sq}^m+C_{pq}^sT_{js}^m-C_{ps}^mT_{jq}^s)g_{mi}+\frac{1}{4}T_{pi}^sT_{sj}^p-\frac{1}{4}g^{pq}T_{pi}^sT_{sq}^mg_{mj}\notag\\
&-\frac{1}{4}g^{pq}T_{pj}^sT_{sq}^mg_{mi}-\frac{1}{2}g^{sq}T_{ps}^pT_{iq}^mg_{mj}-\frac{1}{2}g^{sq}T_{ps}^pT_{jq}^mg_{mi}+\frac{1}{4}g^{pq}T_{pj}^sT_{iq}^mg_{sm}+\frac{1}{4}g^{pq}T_{pi}^sT_{jq}^mg_{sm}\notag\\
&-\frac{1}{4}g^{pl}(T_{jl}^mg_{ms}+T_{sl}^mg_{mj})g^{sq}(T_{pq}^mg_{mi}+ T_{iq}^mg_{mp})\notag.
\end{align}

Let
\begin{align*}
g_r=\sinh^2(r)\bar{h}=\frac{x^{-2}(1-x^2)^2}{4}\bar{h}_{ij}d\theta^id\theta^j.
\end{align*}
 Therefore, at the points along the line $\gamma=\{(r, \theta_0)\big| r>0\}$, the Einstein equations are equivalent to the equations (see \cite{Li})
 \begin{align}
&\label{equn_EinsteinODEs1}\frac{d}{d x}(x(1-x^2)\bar{h}^{pq}\frac{\partial}{\partial x}\bar{h}_{pq})+\frac{1}{2}x(1-x^2)\bar{h}^{ps}\frac{d}{d x}\bar{h}_{sm}\bar{h}^{mq}\frac{d}{d x}\bar{h}_{qp}-2\bar{h}^{pq}\frac{d}{d x} \bar{h}_{pq}= 0,\\ 
&\label{equn_EinsteinODEs2}C_{pq}^p\bar{h}^{qs}\frac{d}{d x}\bar{h}_{si}+C_{iq}^m\bar{h}^{pq}\frac{d}{d x}\bar{h}_{mp}-C_{qp}^p\bar{h}^{qs}\frac{d}{d x}\bar{h}_{si}-C_{pi}^s\bar{h}^{pq}\frac{d}{d x}\bar{h}_{sq}=0,\\ \notag \\
&\label{equn_EinsteinODEs3}-\frac{1}{8}x(1-x^2)^2\frac{d^2}{d x^2}\bar{h}_{ij}+\frac{1}{8}[(n-1)+(1+n)x^2]\,(1-x^2)\frac{d}{d x}\bar{h}_{ij}+\frac{x(1-x^2)^2}{8}\bar{h}^{pq}\frac{d}{d x}\bar{h}_{pi}\frac{d}{d x}\bar{h}_{qj}\\
&+\frac{1}{8}(1+x^2)(1-x^2)\bar{h}^{pq}\frac{d}{d x}\bar{h}_{pq}\bar{h}_{ij}-\frac{1}{16}x(1-x^2)^2\bar{h}^{pq}\frac{d}{d x}\bar{h}_{pq}\frac{d}{d x}\bar{h}_{ij}+(1-n)x\bar{h}_{ij}+xR_{ij}(\bar{h})=0,\notag
\end{align}
for $1\leq i,j\leq n$, with $C_{ij}^p(\theta_0)$ independent of $x$, and $R_{ij}(\bar{h})=R_{ij}(g_r)$. By the regularity result in Theorem \ref{thm_expansion1} (see also \cite{CDLS}), $\bar{h}\in C^{\infty}([0,1])$. This is a system of ordinary differential equations for $\bar{h}_{ij}$ on $x\in [0,1]$, with the boundary conditions
\begin{align}\label{equn_boundaryvalue1}
\bar{h}_{ij}(0)\in [\hat{g}_{ij}(\theta_0)],\,\,\bar{h}_{ij}(1)=g^0_{ij},\,\,\frac{d}{d x}\bar{h}_{ij}(0)=0,\,\,\frac{d}{d x}\bar{h}_{ij}(1)=0,
\end{align}
with $g^0$ the round metric on $\mathbb{S}^n$.

 Now we calculate $C_{ij}^p$ and $\frac{\partial}{\partial \theta^p}T_{ij}^p$ at $\theta=\theta_0$ for $\mathbb{S}^5=\text{SU}(3)/\text{SU}(2)$. Consider the Euclidean space $\mathbb{R}^6=\mathbb{C}^3$ as the three dimensional complex space with the coordinate $(z_1,z_2,z_3)$, where $z_j=x_j+iy_j$ ($j=1,2,3$). At $q=(1,0,0)$, we choose a basis of the Lie algebra $su(3)$ (see for example \cite{KW})
\begin{align*}
v_1=\left(\begin{matrix}&2i&0&0\\ &0&-i&0\\ &0&0&-i \end{matrix}\right),\,\,v_2=\left(\begin{matrix}&0&1&0\\ &-1&0&0\\ &0&0&0 \end{matrix}\right),\,\,v_3=\left(\begin{matrix}&0&i&0\\ &i&0&0\\ &0&0&0 \end{matrix}\right),\\
v_4=\left(\begin{matrix}&0&0&1\\ &0&0&0\\ &-1&0&0 \end{matrix}\right),\,\,v_5=\left(\begin{matrix}&0&0&i\\ &0&0&0\\ &i&0&0 \end{matrix}\right),\,\,v_6=\left(\begin{matrix}&0&0&0\\ &0&i&0\\ &0&0&-i \end{matrix}\right),\\
v_7=\left(\begin{matrix}&0&0&0\\ &0&0&1\\ &0&-1&0 \end{matrix}\right),\,\,v_8=\left(\begin{matrix}&0&0&0\\ &0&0&i\\ &0&i&0 \end{matrix}\right).
\end{align*}
It is clear that the subspace generated by $\{v_6,v_7,v_8\}$ is tangent to $\left(\begin{matrix}&1&&\\ &&\text{SU}(2)& \end{matrix}\right)\subset \text{SU}(3)$ and is isomorphic to $su(2)$. Notice that $v_1\in \mathfrak{p}_0$ is $\text{Ad}_{\text{SU}(2)}$-invariant, and so is the subspace $\mathfrak{p}_2$ spanned by $\{v_2,v_3,v_4,v_5\}$. The corresponding $\text{SU(3)}$-invariant vector field $Y_j$ with respect to $v_j$ has the expression $Y_j=\left(\begin{matrix}z_1\,\,z_2\,\,z_3 \end{matrix}\right)v_j^T$ at the point $(z_1,z_2,z_3)\in \mathbb{S}^5$, $1\leq j\leq 8$. Therefore,
\begin{align*}
&Y_1=\left(\begin{matrix}&2i z_1\\ &-i z_2\\ &-i z_3 \end{matrix}\right)^T,\,\,Y_2=\left(\begin{matrix}&z_2\\ &-z_1\\ &0 \end{matrix}\right)^T,\,\,Y_3=\left(\begin{matrix}&i z_2\\ &i z_1\\ &0 \end{matrix}\right)^T,\,\,Y_4=\left(\begin{matrix}&z_3\\ &0\\ &-z_1 \end{matrix}\right)^T,\,\,Y_5=\left(\begin{matrix}&i z_3\\ &0\\ &i z_1 \end{matrix}\right)^T,\\
&Y_6=\left(\begin{matrix}&0\\ &i z_2\\ &-i z_3 \end{matrix}\right)^T,\,\,Y_7=\left(\begin{matrix}&0\\ &z_3\\ &-z_2 \end{matrix}\right)^T,\,\,Y_8=\left(\begin{matrix}&0\\ &i z_3\\ &i z_2 \end{matrix}\right)^T.
\end{align*}
at the point $(z_1,z_2,z_3)\in \mathbb{S}^5$. Direct calculations show that
\begin{align}
&\label{eqn_LieBraket501}[Y_1,Y_2]=-3Y_3,\,\,[Y_1,Y_3]=3Y_2,\,\,[Y_1,Y_4]=-3Y_5,\,\,[Y_1,Y_5]=3Y_4,\\
&[Y_2,Y_3]=-Y_1+Y_6,\,\,[Y_2,Y_4]=Y_7,\,\,[Y_2,Y_5]=Y_8,\,\,[Y_3,Y_4]=-Y_8,\notag\\
&[Y_3,Y_5]=Y_7,\,\,[Y_4,Y_5]=-Y_1-Y_6\notag.
\end{align}
We choose $Y_1,...,Y_5$ as a basis of the Killing vector field at $q$. Under the coordinate $(\theta^1,...,\theta^5)=(y_1,x_2,y_2,x_3,y_3)$ near $q$, we have the expression of $X_j=Y_j$ ($1\leq j\leq 8$),
\begin{align*}
&X_1= \left(\begin{matrix}2\sqrt{1-\sum_{k=1}^5(\theta^k)^2},\,\theta^3,\,-\theta^2,\,\theta^5,\,-\theta^4 \end{matrix}\right),\,\,X_2= \left(\begin{matrix}\theta^3,\,-\sqrt{1-\sum_{k=1}^5(\theta^k)^2},\,-\theta^1,\,0,\,0 \end{matrix}\right),\\
&X_3= \left(\begin{matrix}\theta^2,\,-\theta^1,\,\sqrt{1-\sum_{k=1}^5(\theta^k)^2},\,0,\,0 \end{matrix}\right), \,\,\,X_4= \left(\begin{matrix}\theta^5,\,0,\,0,\,-\sqrt{1-\sum_{k=1}^5(\theta^k)^2},\,-\theta^1 \end{matrix}\right),\\
&X_5= \left(\begin{matrix}\theta^4,\,0,\,0,\,-\theta^1,\,\sqrt{1-\sum_{k=1}^5(\theta^k)^2} \end{matrix}\right),\,\,\,X_6= \left(\begin{matrix}0,\,-\theta^3,\,\theta^2,\,\theta^5,\,-\theta^4 \end{matrix}\right),\\
&X_7= \left(\begin{matrix}0,\,\theta^4,\,\theta^5,\,-\theta^2,\,-\theta^3 \end{matrix}\right),\,\,\,X_8= \left(\begin{matrix}0,\,-\theta^5,\,\theta^4,\,-\theta^3,\,\theta^2 \end{matrix}\right).
\end{align*}
Hence at $\theta=\theta_0$,
\begin{align*}
&C_{12}^3=-C_{13}^2=C_{14}^5=-C_{15}^4=-\frac{1}{2},\\
&C_{21}^3=-C_{23}^1=-C_{31}^2=C_{32}^1=C_{41}^5=-C_{51}^4=C_{54}^1=-C_{45}^1=1,
\end{align*}
and $C_{ij}^p=0$ otherwise. Substituting to $(\ref{equn_EinsteinODEs2})$, we have
\begin{align*}
 \frac{d}{dx}\bar{h}_{ij}=0,
\end{align*}
for $i\neq j$ and $0 \leq x \leq 1$ and moreover
\begin{align*}
\frac{d}{dx}\bar{h}_{ii}=\frac{d}{dx}\bar{h}_{jj},
\end{align*}
for $i,j>1$. Using the initial data $(\ref{equn_boundaryvalue1})$ and $(\ref{eqn_initialdataSUn})$, we have that $\bar{h}_{ij}=0$ for $i\neq j$ and hence denote $\bar{h}$ as
\begin{align*}
\bar{h}=\text{diag}(I_1(x),\,I_2(x),\,...,I_2(x)),
\end{align*}
for $0 \leq x \leq 1$ at $\theta=\theta_0$, where $I_i(x)\in C^{\infty}([0,1])$, by Theorem \ref{thm_expansion1}. By $(\ref{equn_twotensor})$ and $(\ref{eqn_LieBraket501})$, we have
\begin{align*}
\frac{\partial}{\partial \theta^m}T_{ij}^p&=\,-\alpha_{cba}(-Z_i^d\frac{\partial}{\partial \theta^m}X_d^q\,Z_q^cZ_j^bX_a^p-Z_i^cZ_j^d\frac{\partial}{\partial \theta^m}X_d^q\,Z_q^bX_a^p+Z_i^cZ_j^b\frac{\partial}{\partial \theta^m}X_a^p)\\
&=\,-\alpha_{cba}(-C_{im}^q\,Z_q^cZ_j^bX_a^p-Z_i^cC_{jm}^q\,Z_q^bX_a^p+Z_i^cZ_j^bX_a^qC_{qm}^p),
\end{align*}
with $\alpha_{cba}$ the structure constants in $(\ref{eqn_LieBraket501})$. Therefore, for $0 \leq x \leq 1$ and $\theta=\theta_0$, the expression of $Ric(\bar{h})=Ric(g_r)$ in $(\ref{eqn_RicciSUn})$ holds for $k=2$. Based on the choice of the basis of the $\text{SU}(k+1)$-invariant vector fields on $\mathbb{S}^{2k+1}$ for $k=2$ in \cite{KW} and $k=3$ in \cite{SSHF}, by induction one can easily get the general formula of the choice of the basis of the $\text{SU}(k+1)$-invariant vector fields on $\mathbb{S}^{2k+1}$ for general $k\geq 2$, and do the above calculations to get the expression $(\ref{eqn_metricSUn})$ of $g_r$ and $(\ref{eqn_RicciSUn})$ for $Ric(g_r)$. For $\mathbb{S}^{4k+3}\cong \text{Sp}(k+1)/\text{Sp}(k)$, similar calculations can be done and the basis of Lie algebra $\text{sp}(k+1)$ is chosen in \cite{Ziller1}. To deal with the possible $\text{SO}(3)$ rotation in $\mathfrak{p}_0$, one has to view $(\ref{equn_EinsteinODEs2})$ as a system of $1$-order linear homogeneous ODEs of $\bar{h}_{ij}$ $(1\leq i < j \leq 3)$ and solve the initial value problem of $(\ref{equn_EinsteinODEs2})$ with homogeneous initial data as the generalized Berger metric case, for details see Lemma 4.2 in \cite{Li}. And hence $(\ref{eqn_metricSpn})$ holds and the calculations of $(\ref{eqn_RicciSpn})$ can be found in \cite{Ziller1}.

On $(M^{n+1}, g)$ for $n=2k+1$ with its conformal infinity $(\mathbb{S}^{2k+1}, [\hat{g}])$ where $\hat{g}$ is an $\text{SU}(k+1)$-invariant metric, let $(r, \theta)$ be the polar coordinate chosen above. Assume $\hat{g}$ has the form $(\ref{eqn_initialdataSUn})$ at $q$ under the local coordinate $\theta=(\theta^1,...,\theta^n)$. We substitute $(\ref{eqn_metricSUn})$ and $(\ref{eqn_RicciSUn})$ to the Einstein equations $(\ref{equn_EinsteinODEs1})-(\ref{equn_EinsteinODEs3})$ to have
\begin{align*}
&\frac{d}{d x}[x(1-x^2)\,(I_1^{-1}\frac{\partial}{\partial x}I_1+\,(n-1)I_2^{-1}\frac{\partial}{\partial x}I_2)]+\frac{1}{2}x(1-x^2)((I_1^{-1}\frac{\partial}{\partial x}I_1)^2+\,(n-1)(I_2^{-1}\frac{\partial}{\partial x}I_2)^2)\\
&-2(I_1^{-1}\frac{\partial}{\partial x}I_1+\,(n-1)I_2^{-1}\frac{d}{d x}I_2)= 0,\\
&-\frac{1}{8}x(1-x^2)^2\frac{d^2}{d x^2}I_1+\frac{1}{8}[(n-1)+(1+n)x^2]\,(1-x^2)\frac{d}{d x}I_1+\frac{x(1-x^2)^2}{8}I_1^{-1}(\frac{d}{d x}I_1)^2\\
&+\frac{1}{8}(1+x^2)(1-x^2)(I_1^{-1}\frac{d}{d x}I_1+\,(n-1)I_2^{-1}\frac{d}{d x}I_2)I_1\\
&-\frac{1}{16}x(1-x^2)^2(I_1^{-1}\frac{d}{d x}I_1+\,(n-1)I_2^{-1}\frac{d}{d x}I_2)\frac{d}{d x}I_1+(1-n)xI_1\,+\,(n-1)xI_1^2I_2^{-2}=0,\\
&-\frac{1}{8}x(1-x^2)^2\frac{d^2}{d x^2}I_2+\frac{1}{8}[(n-1)+(1+n)x^2]\,(1-x^2)\frac{d}{d x}I_2+\frac{x(1-x^2)^2}{8}I_2^{-1}(\frac{d}{d x}I_2)^2\\
&+\frac{1}{8}(1+x^2)(1-x^2)(I_1^{-1}\frac{d}{d x}I_1+\,(n-1)I_2^{-1}\frac{d}{d x}I_2)I_2\\
&-\frac{1}{16}x(1-x^2)^2(I_1^{-1}\frac{d}{d x}I_1+\,(n-1)I_2^{-1}\frac{d}{d x}I_2)\frac{d}{d x}I_2+(1-n)xI_2+x(n+1-2I_1I_2^{-1})=0,
\end{align*}
on $x\in [0,1]$ where $I_i'=\frac{d}{dx}I_i$, with the boundary condition
\begin{align}\label{equn_boundaryvaluediagn1}
\frac{I_1(0)}{I_2(0)}=\frac{\lambda_1}{\lambda_2},\,\,I_1(1)=I_2(1)=1,\,\,I_1'(0)=I_2'(0)=I_1'(1)=I_2'(1)=0.
\end{align}
Denote $K=I_1I_2^{n-1}$ and $\phi=\frac{I_2}{I_1}$, $y_1=\log(K)$ and $y_2=\log(\phi)$ so that
\begin{align}\label{equn_changevariablesSUn}
I_1=(K\phi^{1-n})^{\frac{1}{n}},\,\,I_2=(K\phi)^{\frac{1}{n}}.
\end{align}
Therefore, the boundary value problem of the Einstein metrics
becomes
\begin{align}
&\label{equn_SUnEinstein01}y_1''+\frac{1}{2n}[(y_1')^2+(n-1)(y_2')^2]-x^{-1}(1+3x^2)(1-x^2)^{-1}y_1'=0,\\
&\label{equn_SUnEinstein02}y_1''-[2n-1+(1+2n)x^2\,]\,x^{-1}(1-x^2)^{-1}y_1'+\frac{1}{2}(y_1')^2\\
&+8(n-1)(1-x^2)^{-2}[n-(n+1)K^{-\frac{1}{n}}\phi^{-\frac{1}{n}}+K^{-\frac{1}{n}}\phi^{-\frac{n+1}{n}}]=0,\notag\\
&\label{equn_SUnEinstein03}y_2''-[(n-1)+(1+n)x^2\,]\,x^{-1}(1-x^2)^{-1}y_2'+\frac{1}{2}y_1'y_2'+8(n+1)(1-x^2)^{-2}K^{-\frac{1}{n}}\phi^{-\frac{1}{n}}(\phi^{-1}-1)=0,
\end{align}
for $y_1(x),y_2(x)\in C^{\infty}([0,1])$ with the boundary condition
\begin{align}\label{equn_SUnBV01}
\phi(0)=\frac{\lambda_2}{\lambda_1},\,\,K(1)=\phi(1)=1,\,\,y_1'(0)=y_2'(0)=y_1'(1)=y_2'(1)=0.
\end{align}
Combining $(\ref{equn_SUnEinstein01})$ and $(\ref{equn_SUnEinstein02})$, we have
\begin{align}\label{equn_SUnEinstein04}
(y_1')^2-(y_2')^2-4nx^{-1}(1+x^2)(1-x^2)^{-1}y_1'+16n(1-x^2)^{-2}(n-(n+1)(K\phi)^{-\frac{1}{n}}+K^{-\frac{1}{n}}\phi^{-\frac{n+1}{n}})=0.
\end{align}
By $(\ref{equn_expansion1})$, we have the expansion of $y_1$ and $y_2$ at $x=0$, which can also be done directly using the system $(\ref{equn_SUnEinstein01})-(\ref{equn_SUnEinstein03})$ and the boundary data $(\ref{equn_SUnBV01})$. Let $\Phi(x)$ be the function on the left hand side of the equation $(\ref{equn_SUnEinstein04})$. Take derivative of $\Phi$ and use the equations $(\ref{equn_SUnEinstein02})$ and $(\ref{equn_SUnEinstein03})$ we have
\begin{align}\label{equn_SUn2-21}
\Phi'+(y_1'-2x^{-1}(n-1+(n+1)x^2)(1-x^2)^{-1})\Phi=0.
\end{align}
Consider $y_1'$ as a given function. Using the expansion $(\ref{equn_expansion1})$, we can derive that $(\ref{equn_SUn2-21})$ has a unique solution $\Phi=0$, which is $(\ref{equn_SUnEinstein04})$. Therefore, $(\ref{equn_SUnEinstein02})$ and $(\ref{equn_SUnEinstein03})$ combining with the expansion of the Einstein metric imply $(\ref{equn_SUnEinstein04})$. Similarly, any two of the equations $(\ref{equn_SUnEinstein01})-(\ref{equn_SUnEinstein03})$ and $(\ref{equn_SUnEinstein04})$ combining with the boundary expansion of the Einstein metric give the other two equations. Notice that the coefficients of the expansion of the metric can be solved inductively by the equations $(\ref{equn_SUnEinstein02})-(\ref{equn_SUnEinstein03})$ and the initial data $(\ref{equn_SUnBV01})$ before the order $x^n$.

On $(M^{n+1}, g)$ for $n=4k+3$ with its conformal infinity $(\mathbb{S}^{4k+3}, [\hat{g}])$ where $\hat{g}$ is an $\text{Sp}(k+1)$-invariant metric, let $(r, \theta)$ be the polar coordinate chosen above. Assume $\hat{g}$ satisfies $(\ref{eqn_initialdataSpn})$ at $q$ under the local coordinate $\theta=(\theta^1,...,\theta^n)$. Denote $K=I_1I_2I_3I_4^{n-3},\,\,t_i=\frac{I_i}{I_4}$ for $1\leq i\leq 3$. Let $y_1=\log(K)$ and $y_{i+1}=\log(t_i)$ for $1\leq i\leq3$. Substituting $(\ref{eqn_metricSpn})$ and $(\ref{eqn_RicciSpn})$ to the Einstein equations $(\ref{equn_EinsteinODEs1})-(\ref{equn_EinsteinODEs3})$, we have
\begin{align}
&\label{equn_SpnEinstein01}y_1''-x^{-1}(1+3x^2)(1-x^2)^{-1}y_1'+\frac{1}{2n^2}[n(y_1')^2\,+\,((n-1)y_2'-y_3'-y_4')^2\\
&+\,(-y_2'+(n-1)y_3'-y_4')^2\,+\,(-y_2'-y_3'+(n-1)y_4')^2+(n-3)(y_2'+y_3'+y_4')^2]=0,\notag\\
&\label{equn_SpnEinstein02}y_1''-x^{-1}(2n-1+(2n+1)x^2)(1-x^2)^{-1}y_1'+\frac{1}{2}(y_1')^2\\
&+\,8(1-x^2)^{-2}[\,n(n-1)-(K^{-1}t_1t_2t_3)^{\frac{1}{n}}\,\big(\,(n-3)(n+5)-(n-3)(t_1+t_2+t_3)\notag\\
&+\frac{2(2t_1t_2+2t_1t_3+2t_2t_3-t_1^2-t_2^2-t_3^2)}{t_1t_2t_3}\,\big)\,]=0,\notag\\
&\label{equn_SpnEinstein03}y_2''-x^{-1}(n-1+(n+1)x^2)(1-x^2)^{-1}y_2'+\frac{1}{2}y_1'y_2'\\
&- 8 (1-x^2)^{-2} (K^{-1}t_1t_2t_3)^{\frac{1}{n}}[(n-1)t_1+2t_2+2t_3-n-5+\frac{2(t_1^2-(t_2-t_3)^2)}{t_1t_2t_3}]=0,\notag\\
&\label{equn_SpnEinstein04}y_3''-x^{-1}(n-1+(n+1)x^2)(1-x^2)^{-1}y_3'+\frac{1}{2}y_1'y_3'\\
&-8 (1-x^2)^{-2} (K^{-1}t_1t_2t_3)^{\frac{1}{n}}[(n-1)t_2+2t_1+2t_3-n-5+\frac{2(t_2^2-(t_1-t_3)^2)}{t_1t_2t_3}]=0,\notag\\
&\label{equn_SpnEinstein05}y_4''-x^{-1}(n-1+(n+1)x^2)(1-x^2)^{-1}y_{4}'+\frac{1}{2}y_1'y_4'\\
&-8 (1-x^2)^{-2} (K^{-1}t_1t_2t_3)^{\frac{1}{n}}[(n-1)t_3+2t_1+\,2t_2-n-5+\frac{2(t_3^2-(t_1-t_2)^2)}{t_1t_2t_3}]=0,\notag
\end{align}
for $y_i(x)\in C^{\infty}([0,1])$ ($1\leq i \leq 4$) with the boundary condition
\begin{align}\label{equn_SpnBV01}
t_i(0)=\frac{\lambda_i}{\lambda_4},\,\,K(1)=t_i(1)=1,\,\,y_j'(0)=y_j'(1)=0,
\end{align}
for $1\leq i \leq 3$ and $1\leq j \leq 4$. Combining $(\ref{equn_SpnEinstein01})$ and $(\ref{equn_SpnEinstein02})$, we have
\begin{align}
&\label{equn_SpnEinstein06}(y_1')^2-4nx^{-1}(1+x^2)(1-x^2)^{-1}y_1'
-\frac{1}{n(n-1)}[((n-1)y_2'-y_3'-y_4')^2+(-y_2'+(n-1)y_3'-y_4')^2\\
&+(-y_2'-y_3'+(n-1)y_4')^2+(n-3)(y_2'+y_3'+y_4')^2]
+\frac{16n}{n-1}(1-x^2)^{-2}\,[\,n(n-1)\notag\\
&-(K^{-1}t_1t_2t_3)^{\frac{1}{n}}\,\big(\,(n-3)(n+5)-(n-3)(t_1+t_2+t_3)+\frac{2(2t_1t_2+2t_1t_3+2t_2t_3-t_1^2-t_2^2-t_3^2)}{t_1t_2t_3}\,\big)\,]=0.\notag
\end{align}
Let $\Phi(x)$ be the function on the left hand side of the equation $(\ref{equn_SpnEinstein06})$. Take derivative of $\Phi$ and use the equations $(\ref{equn_SpnEinstein01})$ and $(\ref{equn_SpnEinstein03})-(\ref{equn_SpnEinstein05})$, we have
\begin{align}\label{equn_Spn2-21}
\Phi'+(\,\frac{1}{n}\,y_1'-4x(1-x^2)^{-1})\Phi=0.
\end{align}
Consider $y_1'$ as a given function. By the expansion of the metric at $x=0$ and the initial data, similar as the case of the
$\text{SU}(k+1)$-invariant metrics, the equation has a unique solution $\Phi=0$. Therefore, $(\ref{equn_SpnEinstein01})$ and $(\ref{equn_SpnEinstein03})-(\ref{equn_SpnEinstein05})$, combining
with the expansion of the Einstein metric at $x=0$, imply $(\ref{equn_SpnEinstein06})$.  Similarly, any four equations in the
system of the six equations $(\ref{equn_SpnEinstein01})-(\ref{equn_SpnEinstein05})$ and $(\ref{equn_SpnEinstein06})$  containing at least two of $(\ref{equn_SpnEinstein03})-(\ref{equn_SpnEinstein05})$, combining with the expansion of the Einstein metric at $x=0$, imply the other two equations.

We now turn to the uniqueness of the solution to the boundary value problem $(\ref{equn_SUnEinstein01})-(\ref{equn_SUnBV01})$. The approach for the proof of uniqueness of the solution to the boundary value problem $(\ref{equn_SUnEinstein01})-(\ref{equn_SUnBV01})$ for $n=3$ (which is the Berger sphere case), used in \cite{Li}, can be generalized directly to all odd dimensions. We only list the statement of the Lemmas required and the conclusion. For details of the proof, one is referred to \cite{Li}. By similar argument as in Section \ref{section3}, using the volume comparison and the result in \cite{LiQingShi}, we have
\begin{align*}
(\frac{Y(\mathbb{S}^n,[\hat{g}])}{Y(\mathbb{S}^n,[g^{\mathbb{S}^n}])})^{\frac{n}{2}}\leq K(0)=\lim_{x\to 0}\frac{\text{det}(\bar{h})}{\text{det}(\bar{h}^{\mathbb{H}^{n+1}}(x))}=\lim_{r\to +\infty}\frac{\text{det}(g_r)}{\text{det}(g_r^{\mathbb{H}^{n+1}}(r))}<1,
\end{align*}
with $Y(\mathbb{S}^n,[\hat{g}])$ the Yamabe constant of $(\mathbb{S}^n,[\hat{g}])$ and $g^{\mathbb{S}^n}$ the round sphere metric, where
\begin{align*}
g^{\mathbb{H}^{n+1}}=dr^2+g_r^{\mathbb{H}^{n+1}}(r)=x^{-2}(dx^2+\frac{(1-x^2)^2}{4}\bar{h}^{\mathbb{H}^{n+1}})
\end{align*}
is the hyperbolic metric.

\begin{lem}\label{lem_monotonicity01}
For the initial data $\phi(0)\neq 1$, we have $y_1'(x)>0$ for $x\in(0,1)$. Also, it holds that $y_2'(x)>0$ and $\phi(0)<\phi(x)<1$ for $x\in(0,1)$ if $\phi(0)<1$; while $y_2'(x)<0$ and $1<\phi(x)<\phi(0)$ for $x\in(0,1)$ if $\phi(0)>1$. That is to say, $K$ and $\phi$ are monotonic on $x\in(0,1)$.
\end{lem}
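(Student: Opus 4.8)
The plan is to mirror the monotonicity argument of Lemma~5.1 in \cite{Li} (the Berger case), which generalizes cleanly here because the $\text{SU}(k+1)$ reduction involves only the two functions $y_1=\log K$ and $y_2=\log\phi$. The entire proof rests on choosing, for each of the two relevant equations, an integrating factor that puts the singular first-order term into divergence form and simultaneously makes the endpoint contributions harmless when we integrate from the largest interior zero of the relevant derivative out to $x=1$.

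First I would treat $y_1'$. Writing $(\ref{equn_SUnEinstein01})$ as $y_1''-x^{-1}(1+3x^2)(1-x^2)^{-1}y_1'+\frac{1}{2n}[(y_1')^2+(n-1)(y_2')^2]=0$ and multiplying by $\mu_1=x^{-1}(1-x^2)^2$ gives the divergence form $(\mu_1 y_1')'+\frac{1}{2n}\mu_1[(y_1')^2+(n-1)(y_2')^2]=0$, since $\mu_1'/\mu_1=-x^{-1}(1+3x^2)(1-x^2)^{-1}$. If $y_1'$ had a zero in $(0,1)$, let $x_1$ be the largest one; integrating over $[x_1,1]$ and using $y_1'(x_1)=0$ together with the factor $(1-x^2)^2\to 0$ at $x=1$ kills both endpoint terms, leaving the integral of a nonnegative quantity equal to zero. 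Hence $y_1'\equiv y_2'\equiv 0$ on $[x_1,1]$, and by real-analyticity of the solution on $(0,1)$ this forces $y_1'\equiv 0$ throughout, contradicting $K(0)<1=K(1)$ (the strict inequality $K(0)<1$ being the volume-comparison estimate recalled just before the lemma). So $y_1'$ has constant sign, and $\int_0^1 y_1'\,dx=-\log K(0)>0$ forces $y_1'>0$ on $(0,1)$.

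For $y_2'$ I would use $\mu_2=x^{-(n-1)}(1-x^2)^n$, which satisfies $\mu_2'/\mu_2=-[(n-1)+(n+1)x^2]x^{-1}(1-x^2)^{-1}$ and converts $(\ref{equn_SUnEinstein03})$ into $(\mu_2 y_2')'+\frac{1}{2}\mu_2 y_1'y_2'+8(n+1)x^{-(n-1)}(1-x^2)^{n-2}K^{-1/n}\phi^{-1/n}(\phi^{-1}-1)=0$. Arguing by contradiction, suppose $y_2'$ has a zero in $(0,1)$ and let $x_2$ be the largest; integrating over $[x_2,1]$ (both boundary terms again vanish, using $y_2'(x_2)=0$ and $(1-x^2)^n\to 0$) yields $\int_{x_2}^1 \frac{1}{2}\mu_2 y_1'y_2'\,dx=-\int_{x_2}^1 8(n+1)x^{-(n-1)}(1-x^2)^{n-2}K^{-1/n}\phi^{-1/n}(\phi^{-1}-1)\,dx$. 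On $(x_2,1)$ the derivative $y_2'$ keeps a constant sign: if $y_2'>0$ there, then $\phi$ increases to $\phi(1)=1$, so $\phi<1$ and $\phi^{-1}-1>0$, making the right side negative while the left side is positive (as $\mu_2,y_1'>0$); if $y_2'<0$, then $\phi>1$ on $(x_2,1)$ and both signs reverse. Either way we reach a contradiction, so $y_2'$ has no interior zero.

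With $y_2'$ of constant sign, the identity $\int_0^1 y_2'\,dx=-\log\phi(0)$ pins it down: $y_2'>0$ exactly when $\phi(0)<1$ and $y_2'<0$ when $\phi(0)>1$. Monotonicity of $\phi=e^{y_2}$ then yields the stated pinching $\phi(0)<\phi(x)<1$ (resp.\ $1<\phi(x)<\phi(0)$). I expect no serious obstacle here: the only delicate points are verifying the two integrating factors and checking that the $x=1$ endpoint terms vanish, which is guaranteed by the $(1-x^2)^n$ weights together with $y_i'(1)=0$. The sign bookkeeping that was genuinely hard in the three-function generalized Berger setting of Lemma~\ref{lem_monotonicity301} closes immediately in the present case, because the single nonlinear term $\propto(\phi^{-1}-1)$ has a sign tied directly to whether $\phi\lessgtr 1$.
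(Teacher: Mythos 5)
Your proof is correct and follows essentially the same route as the paper, which simply defers to Lemma 5.1 of \cite{Li}: the integrating factors $x^{-1}(1-x^2)^2$ for $(\ref{equn_SUnEinstein01})$ and $x^{1-n}(1-x^2)^n$ for $(\ref{equn_SUnEinstein03})$ are exactly the ones the paper uses (the latter is explicitly named just before Theorem \ref{thm_SUnmetricStability} as the replacement for $x^{-2}(1-x^2)^3$), and the largest-zero integration plus the sign of $(\phi^{-1}-1)$ is the same mechanism as in Lemma \ref{lem_monotonicity301}. The endpoint verifications and the final sign determination via $\int_0^1 y_i'\,dx$ are all in order.
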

For the proof of Lemma \ref{lem_monotonicity01}, see Lemma 5.1 in \cite{Li}. As in Section \ref{section3}, by $(\ref{equn_SUnEinstein04})$ and the initial data $(\ref{equn_SUnBV01})$, we have
\begin{align}
y_1'&\label{equn_y1lowerorderSUn1}=2nx^{-1}(1-x^2)^{-1}[1+x^2\\
&-\sqrt{(1+x^2)^2+\frac{1}{4n^2}x^2(1-x^2)^2(y_2')^2-\frac{4}{n}x^2\,(\,n-(n+1)(\phi K)^{-\frac{1}{n}}+K^{-\frac{1}{n}}\phi^{-\frac{(n+1)}{n}})}\,\,],\notag
\end{align}
and
\begin{align}\label{ineqn_lowerboundKSUn1}
n-(n+1)(\phi K)^{-\frac{1}{n}}+K^{-\frac{1}{n}}\phi^{-\frac{(n+1)}{n}}>0,
\end{align}
for $x\in[0,1)$. This gives a lower bound of $K(0)$ under the assumption $\phi(0)>\frac{1}{(n+1)}$. And hence,
\begin{align}\label{ineqn_y1d1leftSUn1}
y_1'<2nx^{-1}(1-x^2)^{-1}[1+x^2-\sqrt{(1-x^2)^2}\,\,]=4nx(1-x^2)^{-1}.
\end{align}
We assume that the boundary value problem $(\ref{equn_SUnEinstein01})-(\ref{equn_SUnBV01})$ admits two solutions $(y_{11}, y_{12})$ and $(y_{21}, y_{22})$ with $y_{11}=\log(K_1),\,y_{12}=\log(\phi_1),\,y_{21}=\log(K_2)\,$ and $y_{22}=\log(\phi_2)$. By the same argument in Lemma 5.3 in \cite{Li}, we have
\begin{lem}\label{lem_zeroz1z2}
 Assume that $\frac{1}{n+1}<\phi(0)<n+1$ and $\phi(0)\neq 1$. For any two zeroes $0<x_1<x_2\leq 1$ of $z_1'$ so that there is no zero of $z_1'$ on the interval $x\in(x_1,x_2)$, there exists a point $x_3\in(x_1,x_2)$ so that
\begin{align}\label{inequn_signz201}
(y_{12}'+y_{22}')z_1'z_2'\big|_{x=x_3}<0.
\end{align}
Also, for any zero $0<x_2\leq 1$ of $z_1'$, there exists $\varepsilon>0$ so that for any $x_2-\varepsilon < x <x_2$, we have
\begin{align}\label{inequn_signz202}
(y_{12}'(x)+y_{22}'(x))z_1'(x)z_2'(x)>0.
\end{align}
\end{lem}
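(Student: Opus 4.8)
The plan is to reduce both conclusions to a single pointwise sign identity for a first integral of the equation for $z_1=y_{11}-y_{21}$. First I would subtract the two copies of $(\ref{equn_SUnEinstein01})$ satisfied by $(y_{11},y_{12})$ and $(y_{21},y_{22})$, factoring the differences of squares as $(y_{11}')^2-(y_{21}')^2=(y_{11}'+y_{21}')z_1'$ and $(y_{12}')^2-(y_{22}')^2=(y_{12}'+y_{22}')z_2'$. This converts the nonlinear system into a \emph{linear} second order equation for $z_1$,
\begin{align*}
z_1''+\Big[\tfrac{1}{2n}(y_{11}'+y_{21}')-x^{-1}(1+3x^2)(1-x^2)^{-1}\Big]z_1'+\tfrac{n-1}{2n}(y_{12}'+y_{22}')\,z_2'=0,
\end{align*}
in which the second solution enters only through the source term $\tfrac{n-1}{2n}(y_{12}'+y_{22}')z_2'$.

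Next I would kill the first order term with an integrating factor. Using the partial fraction identity $\frac{1+3x^2}{x(1-x^2)}=\frac1x+\frac{4x}{1-x^2}$ and $\int y_{j1}'\,dx=\log K_j$, the correct weight is
\begin{align*}
\mu(x)=(K_1K_2)^{\frac{1}{2n}}\,\frac{(1-x^2)^2}{x},
\end{align*}
which is smooth and strictly positive on $(0,1)$. With this choice the equation takes the conservation form $(\mu z_1')'=-\tfrac{n-1}{2n}\,\mu\,(y_{12}'+y_{22}')z_2'$. Writing $f=\mu z_1'$, solving for $(y_{12}'+y_{22}')z_2'=-\tfrac{2n}{(n-1)\mu}f'$ and multiplying by $z_1'=f/\mu$ yields the key identity
\begin{align*}
(y_{12}'+y_{22}')\,z_1'\,z_2'=-\frac{n}{(n-1)\,\mu^2}\,(f^2)',
\end{align*}
valid wherever $\mu>0$; here $n=2k+1\ge 3$, so $n-1>0$ and the sign of the left side is opposite to that of $(f^2)'$.

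From here both statements become sign assertions about $(f^2)'$. For $(\ref{inequn_signz201})$, the boundary conditions $z_1'(x_1)=z_1'(x_2)=0$ give $f(x_1)=f(x_2)=0$, while $f\neq 0$ on $(x_1,x_2)$ since $z_1'$ has no interior zero there; thus $f^2$ vanishes at both ends and is strictly positive in between, so it is increasing somewhere, say $(f^2)'(x_3)>0$ for some $x_3\in(x_1,x_2)$ just to the right of $x_1$, and the identity gives $(y_{12}'+y_{22}')z_1'z_2'\big|_{x_3}<0$ (equivalently $\int_{x_1}^{x_2}\mu(y_{12}'+y_{22}')z_2'\,dx=-\tfrac{2n}{n-1}[f(x_2)-f(x_1)]=0$, and a continuity/sign-change argument applies). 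For $(\ref{inequn_signz202})$ I would use that, the two solutions being distinct and $z_1(1)=0$ forcing $z_1\not\equiv 0$, the function $z_1'$ is a nontrivial real-analytic function on $(0,1]$ (interior analyticity of Einstein metrics, hence of solutions of $(\ref{equn_SUnEinstein01})-(\ref{equn_SUnEinstein03})$, together with the boundary expansion of Theorem \ref{thm_expansion1}), so each zero has finite order. Near such a zero $x_2$ one writes $f(x)=c\,(x-x_2)^m(1+o(1))$ with $c\neq 0$, whence $(f^2)'=2m c^2(x-x_2)^{2m-1}(1+o(1))<0$ for $x$ slightly below $x_2$ (the exponent $2m-1$ is odd), and the identity gives $(y_{12}'+y_{22}')z_1'z_2'>0$ on $(x_2-\varepsilon,x_2)$.

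I expect the main obstacle to be technical rather than algebraic: pinning down the $K$-dependent integrating factor $\mu$ so that the first order term cancels \emph{exactly}, and justifying the finite order of vanishing of $z_1'$ at an arbitrary zero — in particular at the degenerate endpoint $x_2=1$, where $\mu\to 0$ and the ODE is singular. At $x_2=1$ I would not argue on $(0,1)$ directly but instead read off the order of vanishing of $z_1'$ at the center from the first nonvanishing coefficient in the difference of the two smooth (for odd $n$, genuinely smooth) expansions provided by Theorem \ref{thm_expansion1}, using that $x=1$ corresponds to the interior point $p_0=\{r=0\}$ where the Einstein metric is real-analytic.
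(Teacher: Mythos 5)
Your argument is correct and follows essentially the same route as the paper, which proves this lemma by invoking Lemma 5.3 of \cite{Li}: the method there is exactly your reduction of the difference of the two copies of $(\ref{equn_SUnEinstein01})$ to the conservation form $\bigl(\mu z_1'\bigr)'=-\tfrac{n-1}{2n}\,\mu\,(y_{12}'+y_{22}')z_2'$ with $\mu=(K_1K_2)^{1/(2n)}x^{-1}(1-x^2)^2$, followed by integrating between consecutive zeroes of $z_1'$ for $(\ref{inequn_signz201})$ and a leading-order expansion at a zero for $(\ref{inequn_signz202})$. The one slip worth fixing is that the finite order of vanishing of $z_1'$ at $x=1$ does not come from the boundary expansion of Theorem \ref{thm_expansion1} (which is an expansion at $x=0$) but from real-analyticity of the Einstein metric at the interior center $p_0$ (the fact the paper uses implicitly when it asserts that $y_i'$ and $z_i$ have discrete zeroes), together with the observation that $z_1\equiv 0$ would force $z_2\equiv 0$ via the subtracted equation and Lemma \ref{lem_monotonicity01}, contradicting the distinctness of the two solutions.
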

Based on Lemma \ref{lem_zeroz1z2}, we are ready to prove the uniqueness of the solution to the boundary value problem $(\ref{equn_SUnEinstein01})-(\ref{equn_SUnBV01})$ by a contradiction argument. By the same proof of Theorem 5.4 in \cite{Li}, with the integrating factor $x^{-2}(1-x^2)^3$ in $(5.13)$ in \cite{Li} replaced by $x^{1-n}(1-x^2)^n$, we have the following uniqueness theorem for the boundary value problem $(\ref{equn_SUnEinstein01})-(\ref{equn_SUnBV01})$. (Notice that Theorem 5.2 in \cite{Li} is not needed in the proof of the uniqueness of solutions to the boundary value problem there. And for $K_1(0)=K_2(0)$, by the mean value theorem, there exists a zero of $z_1'$ in $x\in(0,1)$, and Theorem 5.4 in \cite{Li} covers this case.)
\begin{thm}\label{thm_SUnmetricStability}
The solution to the boundary value problem $(\ref{equn_SUnEinstein01})-(\ref{equn_SUnBV01})\,$ for $\frac{1}{n}<\phi(0)<1$ and $1<\phi(0)<n$ must be unique if it exists.
\end{thm}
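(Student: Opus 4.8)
The plan is to follow, and adapt to general odd $n$, the uniqueness proof of the Berger case (Theorem 5.4 of \cite{Li}, $n=3$), the only change being that each integrating factor is replaced by its $n$-dependent analogue. Suppose the boundary value problem $(\ref{equn_SUnEinstein01})$--$(\ref{equn_SUnBV01})$ admits two solutions $(y_{11},y_{12})$ and $(y_{21},y_{22})$, and set $z_1=y_{11}-y_{21}=\log(K_1/K_2)$ and $z_2=y_{12}-y_{22}=\log(\phi_1/\phi_2)$. The boundary data $(\ref{equn_SUnBV01})$ give $z_2(0)=0$, $z_1(1)=z_2(1)=0$ and $z_i'(0)=z_i'(1)=0$ for $i=1,2$ (note $z_1(0)$ is free, since $K(0)$ is the nonlocal datum), while the Einstein equations force the zeroes of each $z_i$ to be discrete unless $z_i\equiv 0$. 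I would argue by contradiction, assuming that $z_1$ or $z_2$ is not identically zero, and extract an inconsistency from the forced interlacing of the critical points of $z_1$ and $z_2$.

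The two structural inputs are already in place. First, the monotonicity of Lemma \ref{lem_monotonicity01}: $y_{i1}'>0$ on $(0,1)$, while $y_{i2}'$ keeps a fixed sign (positive if $\phi(0)<1$, negative if $\phi(0)>1$), so that $y_{12}'+y_{22}'$ is of one sign throughout $(0,1)$. Second, the sign interaction of Lemma \ref{lem_zeroz1z2}: between consecutive zeroes of $z_1'$ the quantity $(y_{12}'+y_{22}')z_1'z_2'$ attains a negative value, whereas it is positive just to the left of every zero of $z_1'$. Since $y_{12}'+y_{22}'$ and $z_1'$ keep their signs on such an interval, this pins the sign change on $z_2'$ and hence produces a zero of $z_2'$ between consecutive zeroes of $z_1'$, which is exactly the interlacing needed.

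The integral machinery rests on writing the $y_2$-equation $(\ref{equn_SUnEinstein03})$ in divergence form. Multiplying by $x^{1-n}(1-x^2)^nK^{\frac12}$ -- the replacement of the $n=3$ factor $x^{-2}(1-x^2)^3$ of \cite{Li} -- converts the principal part together with the $\frac12 y_1'y_2'$ term into the exact derivative $\big(x^{1-n}(1-x^2)^nK^{\frac12}y_2'\big)'$, leaving only the zeroth-order term $8(n+1)(1-x^2)^{-2}K^{-\frac1n}f(\phi)$ with $f(\phi)=\phi^{-\frac{n+1}{n}}-\phi^{-\frac1n}$. Subtracting the two solutions and integrating this identity over the monotonicity intervals supplied by Lemma \ref{lem_zeroz1z2}, the decisive quantity is $f'(\phi)=\tfrac1n\phi^{-\frac{2n+1}{n}}(\phi-(n+1))$, which is strictly negative because the hypothesis $\tfrac1n<\phi(0)<n$ keeps $\phi$ between $\phi(0)$ and $1$, hence below $n+1$. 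This definite sign is what orients the coupling and boundary contributions correctly on each interval; a companion divergence identity for $z_1$, obtained from $(\ref{equn_SUnEinstein01})$ with the factor $x^{-1}(1-x^2)^2$ used in the monotonicity lemma, controls the variation of $z_1$ by that of $z_2$.

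Assembling these, the interlacing together with the definite-sign integral identities on the monotonicity interval abutting $x=1$, where the boundary data $z_i'(1)=z_i(1)=0$ annihilate the free boundary terms, forces $z_2$ and then $z_1$ to vanish, which propagates back to give $z_1\equiv z_2\equiv 0$; the special case $K_1(0)=K_2(0)$ falls under the same argument because then $z_1(0)=z_1(1)=0$ and the mean value theorem produces an interior zero of $z_1'$. The main obstacle is precisely the coupling of $z_1$ and $z_2$: in contrast to the scalar maximum-principle argument that would settle the problem if $K$ were held fixed, the dependence of the $z_2$-equation on $z_1$ (and conversely) cannot be discarded, and it is Lemma \ref{lem_zeroz1z2} together with the fixed sign of $f'$ that turns this coupling into the interlacing/sign contradiction. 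The technical heart is thus checking that the adapted integrating factor $x^{1-n}(1-x^2)^nK^{\frac12}$ still produces the exact derivative for every odd $n$ and that no sign is lost in the general-$n$ zeroth-order term -- both of which hold, as the computation of $f'$ above shows.
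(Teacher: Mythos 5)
Your proposal is correct and follows essentially the same route as the paper: the paper's proof of Theorem \ref{thm_SUnmetricStability} is precisely to rerun the contradiction argument of Theorem 5.4 in \cite{Li}, using the monotonicity of Lemma \ref{lem_monotonicity01} and the sign-interlacing of Lemma \ref{lem_zeroz1z2}, with the integrating factor $x^{-2}(1-x^2)^3$ replaced by its $n$-dependent analogue $x^{1-n}(1-x^2)^n$, and disposing of the case $K_1(0)=K_2(0)$ by the mean value theorem exactly as you do. The only cosmetic difference is that you carry the extra factor $K^{\frac{1}{2}}$, which absorbs the cross term $\frac{1}{2}y_1'y_2'$ into the exact derivative, whereas the paper's cited identity keeps that term explicit; both divergence forms are valid and lead to the same conclusion.
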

Notice that when $\phi(0)=1$, the conformal infinity is the conformal class of the round sphere, and by \cite{Andersson-Dahl}\cite{Q}\cite{DJ}\cite{LiQingShi}, the CCE manifold is isometric to the hyperbolic space. Now we are ready to prove the uniqueness result Theorem \ref{thm_someSUnmetric}.

\begin{proof}[Proof of Theorem \ref{thm_someSUnmetric}]
For the case $\phi(0)=1$ i.e., $\lambda_1=\lambda_2$ so that the conformal infinity is the round sphere, the theorem has been proved in \cite{Andersson-Dahl}\cite{Q}\cite{DJ}\cite{LiQingShi}.

Now we assume that $\lambda_1\neq \lambda_2.\,$  Pick up a point $q\in \partial M=\mathbb{S}^n$. Let $x$ be the geodesic defining function about $C\hat{g}$ with $C>0$ some constant so that $x=e^{-r}$ with $r$ the distance function on $(M,g)$ to the center of gravity $p_0\in M$, see Theorem 3.6 in \cite{Li}.  Under the polar coordinate $(x, \theta)$ with $0\leq x\leq 1$ with $\theta=0$ along the geodesic $\gamma$ connecting $q$ and $p_0$, by the discussion above we have that  the Einstein equations with prescribed conformal infinity $(\mathbb{S}^n, [\hat{g}])$ with $\hat{g}$ the homogeneous metric in $(\ref{eqn_SUnstandardmetricform})$, is equivalent to the boundary value problem $(\ref{equn_SUnEinstein01})-(\ref{equn_SUnBV01})$ along the geodesic $\gamma$ provided that the solution has non-positive sectional curvature. Then by Theorem \ref{thm_SUnmetricStability}, up to isometries, the CCE metric is unique.
\end{proof}
We now give some estimates on the solution $(y_1,\,y_2)$ with $y_1=\log(K)$ and $y_2=\log(\phi)$ to the boundary value problem $(\ref{equn_SUnEinstein01})-(\ref{equn_SUnBV01})$ based on the monotonicity lemma and $(\ref{ineqn_lowerboundKSUn1})$. The estimates are similar as that in Section \ref{section3}.
\begin{lem}\label{lem_uniformestsn01}
Let $\epsilon>0$ be any given small number. For the initial data $\frac{1}{(n+1)}+\epsilon <\phi(0)<(n+1)-\epsilon$, there exists a constant $C=C(\epsilon)>0$ independent of the solution and the initial data $\phi(0)$ such that
\begin{align}
&|y_1^{(k)}(x)|\leq C,\\
&|y_2^{(k)}(x)| \leq C,
\end{align}
with $y_i^{(k)}$ the $k-$th order derivative of $x$, for $1\leq k \leq 3$, $i=1,2$ and $x\in(0,\frac{3}{4}]$. 
\end{lem}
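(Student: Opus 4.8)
The plan is to reproduce, one order higher, the three–stage scheme of Lemma \ref{lem_uniformests301}: first pin down the zeroth–order quantities by monotonicity, then control $y_1'$ and $y_2'$ by integrating factors tailored to the degeneracy at $x=0$, and finally bootstrap through the equations to the second and third derivatives. By Lemma \ref{lem_monotonicity01}, $y_1'>0$ on $(0,1)$ and $y_2'$ keeps a fixed sign, so $\phi(x)$ lies between $\phi(0)$ and $1$ and $K(x)$ between $K(0)$ and $1$. The hypothesis $\frac1{n+1}+\epsilon<\phi(0)<(n+1)-\epsilon$ bounds $\phi$ and $\phi^{-1}$ by a constant $C(\epsilon)$, while the lower bound for $K(0)$ extracted from $(\ref{ineqn_lowerboundKSUn1})$ at $x=0$ bounds $K^{-1}$; hence every algebraic term appearing in $(\ref{equn_SUnEinstein01})$--$(\ref{equn_SUnEinstein03})$ is controlled by $C(\epsilon)$. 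Finally $(\ref{ineqn_y1d1leftSUn1})$ already gives $0<y_1'<4nx(1-x^2)^{-1}$, so in fact $|y_1'|\leq Cx$ on $(0,\tfrac34]$.

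For $y_2'$ I would use the integrating factor $x^{1-n}(1-x^2)^{n}K^{\frac12}$ indicated before Theorem \ref{thm_SUnmetricStability}: it is chosen so that $\frac{d}{dx}\log\!\big(x^{1-n}(1-x^2)^{n}\big)=-[(n-1)+(n+1)x^2]x^{-1}(1-x^2)^{-1}$, and the factor $K^{\frac12}=\exp(\tfrac12 y_1)$ absorbs the term $\tfrac12 y_1'y_2'$. This turns $(\ref{equn_SUnEinstein03})$ into
\begin{align*}
\big(x^{1-n}(1-x^2)^{n}K^{\frac12}y_2'\big)'=-8(n+1)\,x^{1-n}(1-x^2)^{n-2}K^{\frac12-\frac1n}\phi^{-\frac1n}\big(\phi^{-1}-1\big).
\end{align*}
Integrating on $[x,\tfrac34]$, using $\int_x^{3/4}s^{1-n}\,ds=O(x^{2-n})$ for $n\geq3$, and inserting the interior bound for $y_2'(\tfrac34)$ (the system is uniformly nondegenerate on $[\tfrac14,\tfrac34]$, so standard interior ODE estimates give $C(\epsilon)$ control of all derivatives there), yields $|y_2'(x)|\leq Cx$ on $(0,\tfrac34]$. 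Substituting $|y_1'|,|y_2'|\leq Cx$ back into $(\ref{equn_SUnEinstein01})$ and $(\ref{equn_SUnEinstein03})$, so that the singular factors $x^{-1}(1-x^2)^{-1}y_i'$ remain bounded, gives $|y_1''|,|y_2''|\leq C$ immediately.

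The delicate point, and the reason this lemma reaches one order beyond Lemma \ref{lem_uniformests301}, is the uniform bound on the third derivatives near the degenerate endpoint $x=0$. For $y_1$ this is clean: writing $(\ref{equn_SUnEinstein01})$ as $y_1''-\tfrac1x y_1'=R$ with $R:=\tfrac{4x}{1-x^2}y_1'-\tfrac1{2n}[(y_1')^2+(n-1)(y_2')^2]$, the bounds $|y_1'|,|y_2'|\leq Cx$ force $R=O(x^2)$, so $\tfrac{d}{dx}(y_1'/x)=R/x$ is bounded and $y_1'''=R/x+R'$ stays bounded. For $y_2$ the same manipulation produces $\tfrac{d}{dx}(y_2'/x)=\big((n-2)y_2'/x+S\big)/x$, where $S$ contains the zeroth-order term $8(n+1)(1-x^2)^{-2}K^{-\frac1n}\phi^{-\frac1n}(\phi^{-1}-1)$, which does not vanish at $x=0$; here the naive estimate only gives $O(x^{-1})$, and one must exhibit the cancellation by establishing the refined asymptotics $y_2'(x)=\tfrac{N(0)}{n-2}\,x+O(x^2)$, with $N(0)$ the value of that term at $x=0$. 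This refinement I would obtain by expanding the integrating-factor identity above one order further (using $K(x)-K(0),\,\phi(x)-\phi(0)=O(x^2)$, which follow from Step~2 together with Theorem \ref{thm_expansion1}), keeping all constants uniform in $\phi(0)$. Making this cancellation quantitative and uniform is the main obstacle; once it is in place, $|y_2'''|\leq C$ follows, and substituting all the derivative bounds back into the differentiated equations closes the estimate on $(0,\tfrac34]$.
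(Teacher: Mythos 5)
Your proposal is correct and follows essentially the same route as the paper: monotonicity plus the lower bound on $K(0)$ for zeroth-order control, the integrating factor $x^{1-n}(1-x^2)^nK^{\frac12}$ for $|y_2'|\leq Cx$, the bound $(\ref{ineqn_y1d1leftSUn1})$ for $y_1'$, back-substitution for the second derivatives, and the same cancellation mechanisms for the third derivatives. The step you flag as the main obstacle --- the refined asymptotics $y_2'(x)=\frac{8(n+1)}{n-2}K(0)^{-\frac1n}\phi(0)^{-\frac{n+1}{n}}(1-\phi(0))\,x+O(x^2)$ obtained by expanding the integrated identity one order further --- is exactly how the paper closes the estimate in $(\ref{equn_y2d1leftestimatesSUn01-1})$--$(\ref{equn_y2d2leftestimatesSUn01-1})$.
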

\begin{proof}
By the monotonicity of $y_i$ on $x\in[0,1]$ and $(\ref{ineqn_lowerboundKSUn1})$, we have
\begin{align*}
&\big(\frac{(n+1)\phi(0)-1}{n\phi^{\frac{n+1}{n}}(0)}\big)^n<K(0)\leq K(x)\leq 1,
\end{align*}
and $\phi(x)$ lies on the interval between $1$ and $\phi(0)$ for $x\in[0,1]$. By the interior estimates of the elliptic equations $(\ref{equn_SUnEinstein02})$ and $(\ref{equn_SUnEinstein03})$, there exists a constant $C=C(\epsilon)>0$ 
such that
\begin{align*}
|y_i^{(k)}(x)|\leq C\,|1-\phi(0)|,
\end{align*}
for $\frac{1}{4}\leq x \leq \frac{3}{4}$ and $i=1,2$. Now we multiply $K^{\frac{1}{2}}x^{1-n}(1-x^2)^n$ on both sides of $(\ref{equn_SUnEinstein03})$ to have
\begin{align*}
(K^{\frac{1}{2}}x^{1-n}(1-x^2)^ny_2')'+ 8(n+1)x^{1-n}(1-x^2)^{n-2}K^{\frac{1}{2}-\frac{1}{n}}\phi^{-\frac{(1+n)}{n}}(1-\phi)=0.
\end{align*}
For $x\in(0, \frac{1}{2}]$, we integrate the equation on the interval $[x,\frac{1}{2}]$ to have
\begin{align}
\label{eqn_y2singlintleft}K^{\frac{1}{2}}x^{1-n}(1-x^2)^ny_2'(x)=&\,K^{\frac{1}{2}}(\frac{1}{2})2^{n-1}(\frac{3}{4})^ny_2'(\frac{1}{2})\\
&+ 8(n+1)\int_x^{\frac{1}{2}}s^{1-n}(1-s^2)^{n-2}K^{\frac{1}{2}-\frac{1}{n}}(s)\phi^{-\frac{(1+n)}{n}}(s)(1-\phi(s))ds.\notag
\end{align}
Therefore, there exists a constant $C=C(\epsilon)>0$ independent of the solution and $\phi(0)$, such that
\begin{align*}
|y_2'(x)|\leq&\,C|1-\phi(0)|x.
\end{align*}
By $(\ref{ineqn_y1d1leftSUn1})$, for $x\in[0,\frac{1}{2}]$,
\begin{align*}
|y_1'(x)|\leq&\,\frac{4n}{3}x.
\end{align*}
Substituting these two estimates to $(\ref{equn_SUnEinstein02})$ and $(\ref{equn_SUnEinstein03})$, we have that there exists a constant $C=C(\epsilon)>0$ independent of the solution and the initial data such that
\begin{align*}
|y_i''(x)|\leq&\,C,
\end{align*}
for $x\in[0,\frac{1}{2}]$ and $i=1,2$. We take derivative of $(\ref{equn_SUnEinstein01})$ with respect to $x$ and obtain
\begin{align*}
y_1'''=&-\frac{1}{n}[y_1'y_1''+(n-1)y_2'y_2'']+ \frac{(1+3x^2)}{x(1-x^2)}\,\big(-x^{-1}y_1'+y_1''\big)+x^{-1}\frac{d}{dx}[(1+3x^2)(1-x^2)^{-1}]\,y_1'\\
=&-\frac{1}{n}[y_1'y_1''+(n-1)y_2'y_2'']+\frac{(1+3x^2)}{x(1-x^2)}[-\frac{1}{2n}\big((y_1')^2+(n-1)(y_2')^2\big)+ 4x(1-x^2)^{-1}\,y_1']\\
&+x^{-1}\frac{d}{dx}[(1+3x^2)(1-x^2)^{-1}]\,y_1'
\end{align*}
where for the second identity we have used the equation $(\ref{equn_SUnEinstein01})$ again. Therefore, by the above estimates,
there exists a constant $C=C(\epsilon)>0$ independent of the solution and $\phi(0)$ such that
\begin{align*}
|y_1'''(x)|\leq Cx,
\end{align*}
for $0\leq x \leq \frac{1}{2}$. We take derivative of $(\ref{equn_SUnEinstein03})$ with respect to $x$ and obtain
\begin{align*}
y_2'''=&[(n-1)+(1+n)x^2\,]\,x^{-1}(1-x^2)^{-1}(-x^{-1}y_2'+y_2'') + x^{-1}y_2'\,\frac{d}{dx }\big(\frac{(n-1)+(1+n)x^2}{(1-x^2)}\big) \\ &-\frac{1}{2}(y_1''y_2'+y_1'y_2'')-8(n+1)\frac{d}{dx}[(1-x^2)^{-2}K^{-\frac{1}{n}}\phi^{-\frac{1}{n}}(\phi^{-1}-1)],
\end{align*}
and therefore, by the above estimates on $y_i$ up to second order derivatives, there exists a constant $C=C(\epsilon)>0$ independent of the solution and $\phi(0)$ such that
\begin{align}\label{ineqn_y2d3SUn}
|y_2'''(x)|\leq C(x^{-1}|-x^{-1}y_2'+y_2''| + x),
\end{align}
for $0\leq x\leq \frac{1}{2}$. Now we turn to the estimate of the term $x^{-1}|-x^{-1}y_2'+y_2''|$. For $x\in[0,\frac{1}{2}]$, by $(\ref{eqn_y2singlintleft})$, we have
\begin{align}
\label{equn_y2d1leftestimatesSUn01-1}y_2'(x)=&\,(K(\frac{1}{2}))^{\frac{1}{2}}2^{n-1}(\frac{3}{4})^ny_2'(\frac{1}{2})(K(x))^{-\frac{1}{2}}x^{n-1}(1-x^2)^{-n}\\
&- 8(n+1)(K(x))^{-\frac{1}{2}}x^{n-1}(1-x^2)^{-n}\int_x^{\frac{1}{2}}s^{1-n}(1-s^2)^{n-2}(K(0))^{\frac{1}{2}-\frac{1}{n}}(\phi(0))^{-\frac{(1+n)}{n}}(1-\phi(0))ds\notag\\
&+ 8(n+1)(K(x))^{-\frac{1}{2}}x^{n-1}(1-x^2)^{-n}\int_x^{\frac{1}{2}}s^{2-n}(1-s^2)^{n-2}\,\times O(1)ds\notag\\
=&O(1)x^2 - 8(n+1)(K(0))^{\frac{1}{2}-\frac{1}{n}}(\phi(0))^{-\frac{(1+n)}{n}}(1-\phi(0))(K(x))^{-\frac{1}{2}}x^{n-1}(1-x^2)^{-n}\int_x^{\frac{1}{2}}s^{1-n}(1-s^2)^{n-2}ds \notag\\
=&O(1)x^2+\frac{8(n+1)}{n-2}(K(0))^{-\frac{1}{n}}(\phi(0))^{-\frac{(n+1)}{n}}\,(1-\phi(0)) x,\notag
\end{align}
where there exists a constant $C=C(\epsilon)>0$ independent of the solution and $\phi(0)$ so that $|O(1)|\leq C$ for all terms $O(1)$ in the formula, by the above estimates on $y_i$ and $y_i'$ for $i=1,2$. Substituting $(\ref{equn_y2d1leftestimatesSUn01-1})$ back to $(\ref{equn_SUnEinstein03})$, we have
\begin{align}
\label{equn_y2d2leftestimatesSUn01-1}y_2''=&\frac{8(n+1)(n-1)}{n-2}(K(0))^{-\frac{1}{n}}(\phi(0))^{-\frac{(n+1)}{n}}\,(1-\phi(0))+O(1)x\,-\,8(n+1)(K(0))^{-\frac{1}{n}}(\phi(0))^{-\frac{(n+1)}{n}}\,(1-\phi(0))\\
=&\frac{8(n+1)}{n-2}(K(0))^{-\frac{1}{n}}(\phi(0))^{-\frac{(n+1)}{n}}\,(1-\phi(0))+O(1)x \notag
\end{align}
where there exists a constant $C>0$ independent of the solution and $\phi(0)$ so that $|O(1)|\leq C$ for all terms $O(1)$ in the formula, by the above estimates on $y_i$ and $y_i'$ for $i=1,2$. Therefore, by $(\ref{equn_y2d1leftestimatesSUn01-1})$ and $(\ref{equn_y2d2leftestimatesSUn01-1})$, there exists a constant $C>0$ independent of the solution and $\phi(0)$ such that
\begin{align*}
x^{-1}|-x^{-1}y_2'+y_2''|\leq C,
\end{align*}
for $x\in[0,\frac{1}{2}]$, and hence by $(\ref{ineqn_y2d3SUn})$ there exists a constant $C=C(\epsilon)>0$ independent of the solution and $\phi(0)$ such that for $x\in[0,\frac{1}{2}]$,
\begin{align*}
|y_2'''(x)|\leq C.
\end{align*}
This completes the proof of the lemma.

\end{proof}

Recall that in \cite{GL}, Graham and Lee used a gauge fixing method and the Fredholm theory on certain weighted functional spaces to show that for the hyperbolic space $(M^{n+1},g)$ with its conformal infinity $(\partial M, [\hat{g}])$ (where $(\partial M, \hat{g})$ is the round sphere), there exists a CCE metric on $M$ for any given conformal infinity $(\partial M, [\hat{g}_1])$ where $\hat{g}_1$ is a small perturbation of $\hat{g}$ in $C^{2,\alpha}$ ($0<\alpha<1$) sense. Later Lee (\cite{Lee}) generalized this perturbation result to a more general class of CCE manifolds $(M^{n+1}, g)$ with a corresponding conformal infinity $(\partial M, [\hat{g}])$. In particular, the perturbation result holds for $g$ with non-positive sectional curvature, and also for $g$ which has sectional curvature bounded above by $\frac{n^2-8n}{8n-8}$ with the Yamabe constant of $\hat{g}$ non-negative.

Now we are ready to prove Theorem \ref{thm_someSUnmetric1}, the existence theorem of the boundary value problem.

\begin{proof}[Proof of Theorem \ref{thm_someSUnmetric1}]
Notice that this existence result is not a perturbation result in nature. We use continuity method. When $\lambda=1$, $\hat{g}^1$ is the round metric on the sphere $\mathbb{S}^n$, and the CCE metric $g^1$ filled in is the hyperbolic metric. For openness, let $\lambda_0 \in (\frac{1}{(n+1)}, 1]$ (resp. $\lambda_0 \in [1, n+1)$) such that $g^{\lambda_0}$ is a CCE metric on $B_1$ which is non-positively curved with $(\mathbb{S}^n, [\hat{g}^{\lambda_0}])$ as its conformal infinity. Then by \cite{Lee}, there exists $\epsilon>0$ such that for $\lambda\in (\lambda_0-\epsilon, \lambda_0+\epsilon)$, there exists a CCE metric $g^{\lambda}$ on $B_1$ with $(\mathbb{S}^n, [\hat{g}^{\lambda}])$ as its conformal infinity and the sectional curvature of $g^{\lambda}$ is close enough to that of $g^{\lambda_0}$ at the corresponding points on $B_1$ for $\epsilon>0$ small enough, and moreover, by Theorem \ref{thm_expansion1}, $g^{\lambda}$ has the smooth expansion $(\ref{equn_expansion1})$.

By \cite{Li} and the argument at the beginning of this section, if a CCE metric $g^{\lambda}$ is non-positively curved, then the Killing vector fields on $(\mathbb{S}^n, \hat{g}^{\lambda})$ are extended to $(B_1, g^{\lambda})$. And also there exists a center of gravity $p_0$ of $(B_1, g^{\lambda})$ so that $x=e^{-r}$ is a geodesic defining function with $r$ the distance function to $p_0$ on $(B_1, g^{\lambda})$. For a given point $q\in \mathbb{S}^n$ on the boundary, let $\gamma$ be the geodesic connecting $p_0$ and $q$. Let $(x, \theta)$ $(0\leq x \leq 1)$ be the local coordinate near the geodesic as defined at the beginning of this section. Then by the above argument, it has the form $(\ref{eqn_metrictwocomponents})$, $(\ref{eqn_metricSUn})$ and $(\ref{equn_changevariablesSUn})$, which satisfies the boundary value problem $(\ref{equn_SUnEinstein01})-(\ref{equn_SUnBV01})$.

For compactness, let $\{\lambda_j\}_{j>0}$ be a sequence of points converging to $\lambda_0\in (\frac{1}{(n+1)}, (n+1))$, with $g^{\lambda_j}$ a non-positively curved CCE metric with $(\mathbb{S}^n, [\hat{g}^{\lambda_j}])$ as its conformal infinity. Here $\{\hat{g}^{\lambda_j}\}_j$ are invariant under the same $\text{SU}(2k+1)$-group action on $\mathbb{S}^n$. Let $p_0^j$ be the center of gravity of $(B_1, g^{\lambda_j})$ for $j>0$. For any $(B_1, g^{\lambda_j})$, there exists a natural diffeomorphism $F_j:\,U_{p_0^j} \to \partial B_1$ between the unit tangent sphere at $p_0^j$ and $\partial B_1$, induced by the exponential map at $p_0^j$. Now for $j>1$, we define a map $H_j:\,\overline{B}_1\to \overline{B}_1$, with $H_j\big|_{\partial B_1}=\text{Id}$ and
\begin{align*}
H_j(\text{Exp}_{p_0^1}^{g^{\lambda_1}}(t\,F_1^{-1}(p)))=\text{Exp}_{p_0^j}^{g^{\lambda_j}}(t\,F_j^{-1}(p)),
 \end{align*}
 for $p\in \partial B_1$ and $t\geq 0$, where $\text{Exp}_{p_0^j}^{g^{\lambda_j}}$ is the exponential map at $p_0^j$ on $(B_1, g^{\lambda_j})$. It is easy to check that $H_j$ is a diffeomorphism. Let $g_j^{\ast}=H_j^{\ast}g_j$, which is still denoted as $g_j$, for $j\geq 2$. Now $(B_1, g_j)$ has the same center of gravity $p_0\in B_1$ for all $j>0$. Let $r$ be the distance function to $p_0$, and hence $x=e^{-r}$ is the geodesic defining function of each $j>0$.  Then for a given point $q\in \mathbb{S}^n$ on the boundary, let $\gamma$ be the geodesic connecting $q$ and $p_0$ and $(r, \theta)$ be the polar coordinate near $\gamma$ defined as above. 
  Then, under the polar coordinates $(r, \theta)$,
 \begin{align*}
 g^{\lambda_j}=dr^2+g_r^j=dr^2+ \sinh^2(r)\,\bar{h}^j=x^{-2}(dx^2+(\frac{1-x^2}{2})^2\bar{h}^j),
 \end{align*}
 and moreover these metrics have the form $(\ref{eqn_metrictwocomponents})$, $(\ref{eqn_metricSUn})$ and $(\ref{equn_changevariablesSUn})$ with $(g_r, \bar{h}, I_i(x), \phi(x), K(x))$ replaced by $(g_r^j, \bar{h}^j, I_i^j(x), \phi^j(x), K^j(x))$ and $y_i(x)$ replaced by $y_i^j(x)$ with $i=1,2$ correspondingly for any $j\geq 1$, and also these metrics satisfy the boundary value problem $(\ref{equn_SUnEinstein01})-(\ref{equn_SUnBV01})$ for $x\in[0,1]$. By Lemma \ref{lem_uniformestsn01}, there exists $C>0$ independent of $j$ such that
\begin{align*}
|(y_i^j)^{(k)}(x)|\leq C,
\end{align*}
on $x\in [0,\frac{1}{2}]$ for $i=1,2$, $0\leq k\leq 3$ and $j\geq 1$, with  $(y_i^j)^{(k)}$ the $k$-th order derivative of $y_i^j$. Therefore, up to a subsequence, $y_i^j$ converges in $C^{2,\alpha}$ norm to some function $y_i$ on $x\in[0,\frac{1}{2}]$ for any $\alpha \in (0,1)$. Moreover, by the non-positivity of the sectional curvatures of $g^{\lambda_j}$ and the Einstein equations, the norm of the Weyl tensor has the bound $|W|_{g^{\lambda_j}}\leq \sqrt{(n^2-1)n}$. The same argument in Lemma \ref{lem_yiboundrightGB} yields that there exists a constant $C>0$ independent of $j$ such that
\begin{align*}
|(y_i^j)^{(k)}(x)|\leq C,
\end{align*}
on $x\in [\frac{1}{2}, 1]$ for $i=1,2$, $0\leq k\leq 2$ and $j\geq 1$. Moreover, by the uniform bound of $|W|_{g^{\lambda_j}}$ and the Einstein equation, the sequence $g^{\lambda_j}$ are uniformly bounded in $x\in [\frac{1}{2},1]$ under any $C^k$ $(k\geq 1)$ norm. Therefore, up to a subsequence, $g^{\lambda_j}$ converges to an Einstein metric on the domain $x\in [\frac{1}{2}, 1]$ which is $r\leq \ln(2)$, in $C^k$ norm for any $k>0$ with the same Killing vector fields as $g^{\lambda_j}$. Therefore, up to a subsequence, $(B_1, p_0, g^{\lambda_j})$ converges to a CCE manifold $(B_1, p_0, g^{\lambda_0})$ in $C^{k, \alpha}$ for any $k \geq 1$ and $\alpha \in (0, 1)$ in pointed Cheeger-Gromov sense, which is non-positively curved. Also, with $x$ the same geodesic defining function of $g^{\lambda_0}$ and $g^{\lambda_j}$, $x^2g^{\lambda_j} \to x^2g^{\lambda_0}$ in $x\in[0,\frac{1}{2}]$ in $C^{2,\alpha}$ and $(\mathbb{S}^n, [\hat{g}^{\lambda_0}])$ is the conformal infinity of $(B_1, g^{\lambda_0})$. Moreover, by Theorem \ref{thm_expansion1}, the metric $g^{\lambda_0}$ is smooth and has a smooth expansion at $x=0$. This proves the compactness.

Then a direct argument of continuity method starting from $\lambda=1$ concludes the theorem.

\end{proof}


\begin{thebibliography}{s2}


\bibitem{Anderson}  M. Anderson, {\em  Boundary regularity, uniqueness and non-uniqueness for
AH Einstein metrics on $4$-manifolds}, Advances in Math. {\bf{179}} (2003), 205 - 249.

\bibitem{Anderson1}  M. Anderson, {\em  Einstein metrics with prescribed conformal infinity on $4$-manifolds}, Geom. Funct. Anal. {\bf{18}} (2008), 305 - 366.

\bibitem{Andersson-Dahl}  L. Andersson, M. Dahl, {\em  Scalar curvature rigidity for asymptotically hyperbolic manifolds}, Ann. Global Anal. and Geo. {\bf{16}} (1998), 1-27.

\bibitem{Besse}  A. Besse, {\em  Einstein manifolds}, Springer-Verlag, Berlin, 1987.

\bibitem{Borel1}  A. Borel, {\em  Some remarks about Lie groups transitive on spheres and tori}, Bull. A.M.S. {\bf{55}} (1949), 580-587.

\bibitem{Borel2}  A. Borel, {\em  Le plan projectif des octaves et les sph$\grave{\text{e}}$res comme espaces homog$\grave{\text{e}}$nes}, C.R. Acad.Sc.Paris {\bf{230}} (1950), 1378-1380.



\bibitem{Biquard2}  O. Biquard, {\em  Continuation unique $\grave{a}$ partir de l'infini conforme pour les m$\acute{e}$triques d'Einstein}, Math. Res. Lett. {\bf{15}} (2008), no. 6, 1091-1099.

\bibitem{TB}  T. Buttsworth, {\em  The Dirichlet problem for Einstein metrics on
cohomogeneity one manifolds}, preprint, arXiv:1710.02037v1.

\bibitem{CLW}  X. Chen, M. Lai, F. Wang, {\em  Escobar-Yamabe compactifications for Poincar$\acute{\text{e}}$-Einstein manifolds and rigidity theorems}, preprint, arXiv: 1712.02540v1.

\bibitem{CDLS} P.T. Chru$\acute{\text{s}}$ciel, E. Delay, J.M. Lee, D.N. Skinner, {\em  Boundary regularity of conformally compact Einstein metrics},
J. Differential Geom. {\bf{69}} (2005), no. 1, 111-136.

\bibitem{Deng} S. Deng, {\em  Homogeneous Finsler Spaces},
Springer Monographs in Mathematics. Springer, New York, 2012, xiv+240 pp.

\bibitem{DJ}  S. Dutta and M. Javaheri, {\em  Rigidity of conformally compact manifolds with the round
sphere as conformal infinity}, Adv. Math., {\bf{224}}, (2010), 525-538.

\bibitem{Eberlein} P. Eberlein, {\em Geodesic flows in manifolds of nonpositive curvature}, Proc. Symp. Pure Math., {\bf{69}} (2001), 525-571.

\bibitem{FG} C. Fefferman and C.R. Graham, {\em Conformal invariants,}  in Elie Cartan et les Mathematiques d'aujourd'hui, Asterisque (1985), 95-116.

\bibitem{Graham}  R. Graham, {\em Volume and area renormalizations for conformally compact Einstein
metrics},  Rend. Circ. Mat. Palermo. Ser. II Suppl. {\bf 63} (2000) 31 - 42,  arXiv:math/9909042v1.

\bibitem{GL}  R. Graham, J. Lee, {\em Einstein metrics with prescribed conformal infinity on the ball}, Adv. Math. {\bf{87}} (1991), no. 2, 186-225.

\bibitem{GZ}  K. Grove, W. Ziller, {\em Comomogeneity one manifolds with positive Ricci curvature}, Invent. Math. {\bf{149}} (2002), no. 3, 619-646.

\bibitem{GH}  M. Gursky, Q. Han, {\em Non-existence of Poincar$\acute{\text{e}}$-Einstein manifolds with prescribed conformal infinity}, Geom. Funct. Anal. {\bf{27}} (2017), no. 4, 863-879.

\bibitem{GS}  M. Gursky, G. Sz$\acute{\text{e}}$kelyhidi, {\em  A local existence result for Poincar$\acute{\text{e}}$-Einstein metrics}, preprint, arXiv: 1712.04017.

\bibitem{HP}  S.W. Hawking, D.N. Page, {\em Thermodynamics of black holes in Anti-de Sitter space}, Comm. Math.
Phys. {\bf{87}} (1983), no. 2, 577-588.

\bibitem{Hitchin1} N.J. Hitchin, {\em Twistor spaces, Einstein metrics and isomonodromic deformations,}  J.
Differential Geom. {\bf 42}(1) (1995), 30 -  112.


\bibitem{KW}  M. Kerin, D. Wraith, {\em Homogeneous metrics on spheres}, Irish Math. Soc. Bulletin {\bf{51}} (2003), 59-71.

\bibitem{SKichenassamy} S. Kichenassamy, {\em On a conjecture of Fefferman and Graham}, Adv. Math. {\bf{184}} (2004), no. 2, 268-288.

\bibitem{LeBrun1} C. LeBrun, {\em $\mathcal{H}$-space with a cosmological constant}, Proc. Roy. Soc. London Ser. A {\bf{380}} (1982), 171-185.

\bibitem{Lee} J.M. Lee, {\em The spectrum of an asymptotically hyperbolic Einstein manifold,} Comm. Anal.
Geom. {\bf 3} (1995) no. 1-2, 253 - 271.

\bibitem{Li} G. Li, {\em On Uniqueness of conformally compact Einstein metrics with homogeneous conformal infinity,} preprint, arXiv:1612.09358v3.

\bibitem{LiQingShi} G. Li, J. Qing, Y. Shi, {\em Gap phenomena and curvature estimates for conformally compact Einstein manifolds,} Trans. Amer. Math. Soc. {\bf 369} (2017), no.6, 4385-4413.

\bibitem{Mazzeo1}  R. Mazzeo,  {\em Elliptic theory of differential edge operators I}, Comm. Partial Differential Equations {\bf{16}}(10) (1991), 1615 - 1664.

\bibitem{MS}  D. Montgomery, H. Samelson,  {\em Transformation groups on spheres}, Annals od Math. {\bf{44}} (1943), 454 - 470.

\bibitem{Obata1} M. Obata, {\em The conjectures on conformal transformations of Riemannian manifolds,}  J. Differential Geom. 6 (1971) 247 - 258.

\bibitem{Pedersen} H. Pedersen, {\em Einstein metrics, Spinning top motions and monopoles}, Math. Ann. {\bf 274} (1986) 35 - 59.

\bibitem{petersen} P. Petersen, {\em Riemannian Geometry}, Graduate Texts in Mathematics {\bf 171}, 1998 Springer-Verlag New York.

\bibitem{Q} J. Qing, {\em On the rigidity for conformally compact Einstein manifolds}, Int. Math. Res. Not. (2003), no. 21, 1141-1153.

\bibitem{SSHF} M.A.A. Sbaih, M.KH. Srour, M. S. Hamada, H. M. Fayad, {\em Lie algebra and representation of $\text{SU}(4)$,}  Electronic Journal of Theoretical Physics {\bf 10}, no.28, (2013), 9 - 26.

\bibitem{ST}  Y. Shi, G. Tian, {\em  Rigidity of asymptotically hyperbolic manifolds}, Commun.Math.Phys, {\bf 259}, (2005), 545-559.

\bibitem{Wang} X. Wang, {\em On conformally compact Einstein manifolds,} Math. Res. Lett.  {\bf 8} (2001), 671 - 688.

\bibitem{WY} E. Witten, S.T. Yau, {\em Connectedness of the boundary in the ADS/CFT Correspondence,} Adv. Theor. Math. Phys.  {\bf 3} (1999), no.6, 1635-1655.

\bibitem{Yano1} K. Yano, {\em The theory of Lie derivatives and its applications,} North-Holland,
Amsterdam, 1957.

\bibitem{Ziller1} W. Ziller, {\em Homogeneous Einstein metrics on spheres and projective spaces}, Math. Ann., {\bf{259}} (1982), 351- 358.

\end{thebibliography}
 \end{document}